\documentclass[reqno]{amsart}
\usepackage{amsthm,amsmath,amssymb}
\usepackage{stmaryrd,mathrsfs}
\usepackage{esint}
\usepackage{tikz}
\usepackage{hyperref}
\usepackage{enumerate}

\allowdisplaybreaks

\newcommand{\ud}[0]{\,\mathrm{d}}

\newcommand{\abs}[1]{|#1|}

\newcommand{\Norm}[2]{\|#1\|_{#2}}

\newcommand{\pair}[2]{\langle #1,#2 \rangle}

\newcommand{\bddlin}[0]{\mathscr{L}}

\newcommand{\loc}[0]{\operatorname{loc}}

\newcommand{\sign}[0]{\operatorname{sgn}}

\newcommand{\R}{\mathbb{R}}
\newcommand{\C}{\mathbb{C}}
\newcommand{\N}{\mathbb{N}}
\newcommand{\Z}{\mathbb{Z}}
\newcommand{\T}{\mathbb{T}}

\newcommand{\eps}[0]{\varepsilon}

\newcommand{\dbar}{{\mkern3mu\mathchar'26\mkern-12mu d}}

\swapnumbers \numberwithin{equation}{section}

\theoremstyle{plain}
\newtheorem{theorem}[equation]{Theorem}
\newtheorem{proposition}[equation]{Proposition}
\newtheorem{corollary}[equation]{Corollary}
\newtheorem{lemma}[equation]{Lemma}

\theoremstyle{definition}

\theoremstyle{remark}
\newtheorem{remark}[equation]{Remark}
\newtheorem{example}[equation]{Example}

\makeatletter
\@namedef{subjclassname@2010}{%
  \textup{2010} Mathematics Subject Classification}
\makeatother

\begin{document}

\title[Rough spectral asymptotics for commutators]{Rough spectral asymptotics for commutators of general singular integrals}

\author{Tuomas Hyt\"onen}
\address{Aalto University,
Department of Mathematics and Systems Analysis,
P.O. Box 11100, FI-00076 Aalto, Finland}
\email{tuomas.hytonen@aalto.fi}

\thanks{The author was supported by the Research Council of Finland, projects 364208 and 371637.}

\keywords{Commutator, Schatten class, spectral asymptotic formula, Marcinkiewicz interpolation theorem}
\subjclass[2020]{46B70, 46E36, 47B10, 47B47}




\begin{abstract}
Recently, Schatten class membership of commutators arising from Connes' quantised calculus has been characterised in a general framework, which covers multiple concrete situations of interest. In contrast to this, related results dealing with the asymptotic behaviour of the singular values of these commutators have been restricted to a relatively short list of specific examples only. In this work, we show that a version of the spectral asymptotic formula, involving comparable lower and upper limits in place of an exact limit, remains valid in great generality of singular integrals over metric measure spaces. A key proof ingredient of independent interest is a new asymptotic version of the Marcinkiewicz interpolation theorem. We also present a new approach to commutator upper bounds at the critical index; while not quite as general as more elaborate methods, it is general enough to reproduce the known upper bounds in Heisenberg and Carnot groups in a much simpler way.
\end{abstract}

\maketitle

\setcounter{tocdepth}{1}

\tableofcontents

\section{Introduction}

In the quantised calculus of Connes \cite[Chapter IV]{Connes:book} (see also \cite[Introduction, Section 4]{Connes:book} for an overview), the role of the classical differential $\ud b$ of a function $b$ is taken by the commutator of $b$ (identified with the operator of pointwise multiplication $b:f\mapsto bf$) with a suitable fixed operator $T$:
\begin{equation*}
  \dbar b:=[T,b]:=Tb-bT:\quad f\mapsto T(bf)-bTf.
\end{equation*}
Regarding $\dbar b$ as an operator on a Hilbert space $\mathcal H$, it is of particular interest in this theory to understand when the singular values $a_n(\dbar b)$ exhibit asymptotic decay of the form $n^{-\frac{1}{p}}$, as this allows one to compute the Dixmier traces of $\abs{\dbar b}^p$, which are interpreted as quantised analogues of the integral of the classical infinitesimal $\abs{\!\ud b}^p$. For a bounded linear operator $R$ on a Hilbert space $\mathcal H$, we recall that its $n$th singular value (or approximation number, hence the notation) is defined by
\begin{equation}\label{eq:an(R)}
  a_n(R):=a_n(R;\mathcal H)
  :=\inf\{\Norm{R-F}{\bddlin(\mathcal H)}:\operatorname{rank}F<n\},
\end{equation}
where $\Norm{\ }{\bddlin(\mathcal H)}$ denotes the operator norm on $\mathcal H$, and
 $\operatorname{rank}F<n$ means that $F$ is a linear combination of less than $n$ rank-one operators
 \begin{equation*}
  e\otimes h:\quad f\mapsto e\pair{h}{f},
\end{equation*}
where $e,h\in\mathcal H$.
 
Turning to concrete models with a specific choice of the operator $T$ on a Hilbert space of square-integrable functions $\mathcal H=L^2(\mu)$ with respect to some measure $\mu$, the desired asymptotics take the form
\begin{equation} \label{eq:specAs}
  \lim_{n\to\infty } a_n([T,b];L^2(\mu))\cdot n^{\frac1d} 
  \sim \Norm{b}{\dot W^{1,d}(\nu)}
  :=\Norm{\nabla b}{L^p(\nu)},
\end{equation}
where $d$ is a relevant critical index, and the second measure $\nu$ on the right is often but not always the same as the measure $\mu$ on the left.
So far, a behaviour like \eqref{eq:specAs} has been confirmed in a handful of situations, where $T$ is either the sign of a Dirac operator on $\R^n$ \cite{FSZ:23}, a classical Riesz transform $R_j=\partial_j(-\Delta)^{-\frac12}$ on $\R^n$ (a case essentially contained in \cite{FSZ:23}, as observed in \cite[Theorem C.1(iii)]{FLSZ}), the fractional Laplacian $(-\Delta)^{\frac s2}$ on $\R^n$ \cite{FSZ:24}, or a Bessel--Riesz transform $R_{\lambda,j}=\partial_j(-\Delta_\lambda)^{-\frac12}$ related to the Bessel Laplacian $\Delta_\lambda:=\Delta+2\lambda x_{n+1}^{-1}\partial_{n+1}$ on $(\R^{n+1}_+,x_{n+1}^{2\lambda}\ud x)$ \cite{FLSZ}. A related trace formula has also been obtained for Riesz transforms on Heisenberg groups in \cite{FLMSZ}. Although outside the scope of our present investigation, we mention that analogous results are also available on the quantum torus $\T_\theta^d$ \cite{MSX:tori,SXZ} and the quantum Euclidean space $\R_\theta^d$ \cite{MSX:Euc}; in these cases, $\dot W^{1,d}(\nu)$ in \eqref{eq:specAs} should also be interpreted as a suitable quantum Sobolev space.

In the meantime, the uniform, non-asymptotic version of \eqref{eq:specAs}, namely
\begin{equation}\label{eq:weakSd}
  \Norm{[T,b]}{S^{d,\infty}(L^2(\mu))}
  \sim \Norm{b}{\dot W^{1,d}(\nu)},\qquad
  \Norm{R}{S^{p,\infty}(\mathcal H)}
  :=\sup_{n\geq 1} n^{\frac1p} a_n(R;\mathcal H),
\end{equation}
as well as related estimates for other Schatten--Lorentz norms with $p,q\in(0,\infty)$,
\begin{equation}\label{eq:Spq-def}
  \Norm{R}{S^{p,q}(\mathcal H)}:=
  \Norm{\{a_n(R;\mathcal H)\}_{n=1}^\infty}{\ell^{p,q}}
  :=\Big(\sum_{n=1}^\infty n^{\frac qp-1}a_n(R;\mathcal H)^q\Big)^{\frac 1q},
\end{equation}
have been proved in a much broader generality beyond the prominent special cases listed above. After being established in several concrete settings \cite{CST,FLMSZ,FLSZ,LXY,LMSZ} first, the uniform estimate \eqref{eq:weakSd} is now known to hold for any non-generate singular integral operators $T$ over metric measure spaces of controlled geometry \cite{Hyt:Osc,Hyt:Sp}; an abstract framework covering the previous concrete cases \cite{CST,FLMSZ,LXY,LMSZ} of \eqref{eq:weakSd} with equal measures $\mu=\nu$ was first obtained in \cite{Hyt:Sp}, and this was further extended in \cite{Hyt:Osc} to the case of two different $\mu\neq\nu$, incorporating also the results of \cite{FLSZ} on the Bessel setting into the general theory.

The benefits of such a general theory are evident. Instead of treating each situation of interest on a case-by-case basis, one can simply appeal to a general result, which also makes the underlying assumptions more transparent. However, until now, the asymptotic behaviour of the singular values was only available in the distinguished special cases listed above. Our present contribution improves the situation by providing the following general statement, where $\dot M^{1,d}(\nu)$ is the so-called Haj\l{}asz--Sobolev space from \cite{Hajlasz:96}, a suitable replacement in abstract spaces of the classical $\dot W^{1,d}$ from \eqref{eq:specAs}. We refer the reader to Section \ref{sec:prelim} for the full details of this and other assumptions in Theorem \ref{thm:main} below.

\begin{theorem}\label{thm:main}
Let $(X,\rho)$ be a metric space with two doubling measures $\mu,\nu$ that satisfy the $A_\infty$ relation. Suppose further that $(X,\rho,\nu)$ is Ahlfors $d$-regular for some $d\in(1,\infty)$ and supports the $q$-Poincar\'e inequality for some $q\in[1,d)$. Let $T$ be a strongly non-degenerate $\mu$-singular integral and $b\in L^1_{\loc}(\mu)$. Suppose that either
\begin{enumerate}[\rm(1)]
  \item\label{it:main-reg} $T\in\bddlin(L^2(\mu))$ is H\"older $\eta$-regular with $\eta>(1-\frac{d}{2})_+$, or
  \item\label{it:main-new} $\mu=\nu$ and $d>2$.
\end{enumerate}
Then $[T,b]\in S^{d,\infty}(L^2(\mu))$ if and only if $b\in\dot M^{1,d}(\nu)$, and in this case
\begin{subequations}\label{eq:main}
\begin{align}
  \Norm{b}{\dot M^{1,d}(\nu)}
  &\sim\Norm{[T,b]}{S^{d,\infty}(L^2(\mu))}:=\sup_{n\geq 1} a_n([T,b];L^2(\mu))\cdot n^{\frac1d}
  \label{eq:main-bound} \\
  &\sim\liminf_{n\to\infty } a_n([T,b];L^2(\mu))\cdot n^{\frac1d} 
  \sim\limsup_{n\to\infty } a_n([T,b];L^2(\mu))\cdot n^{\frac1d} .
  \label{eq:main-asymp}
\end{align}
\end{subequations}
\end{theorem}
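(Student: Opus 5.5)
The equivalence \eqref{eq:main-bound} is essentially the uniform theory of \cite{Hyt:Osc,Hyt:Sp}, and I would quote it in case \eqref{it:main-reg}. Case \eqref{it:main-new} is different, because no H\"older regularity is assumed. There I would prove the upper bound $\Norm{[T,b]}{S^{d,\infty}}\lesssim\Norm{b}{\dot M^{1,d}(\mu)}$ directly. The tool is real interpolation between two Schatten estimates, one at an index $p>d$ and one at an index $p<d$. The bound at $p>d$ comes from the Besov characterisation $\Norm{[T,b]}{S^p}\lesssim\Norm{b}{\dot B^{d/p}_{p,p}}$, valid for $p>2$, which is where $d>2$ enters. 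The bound at $p<d$ comes from a mixed Sobolev-type estimate. I then interpolate in $b$ with the Marcinkiewicz interpolation theorem, using that $\dot M^{1,d}$ embeds into the appropriate real interpolation space of Besov spaces. The lower bound $\Norm{b}{\dot M^{1,d}}\lesssim\Norm{[T,b]}{S^{d,\infty}}$ holds in both cases by strong non-degeneracy: one tests against dyadic pairs of cubes as in \cite{Hyt:Osc}. Since the $\limsup$ is at most the supremum, \eqref{eq:main-bound} then yields the upper half of \eqref{eq:main-asymp}.

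The remaining content of the theorem is the lower bound $\liminf_{n} n^{1/d}a_n([T,b])\gtrsim\Norm{b}{\dot M^{1,d}(\nu)}$, and I would base it on an asymptotic version of the Marcinkiewicz interpolation theorem. The idea is that the Hilbert--Schmidt and operator-norm bounds for $[T,b]$ localise to small scales. Decompose $b$ via martingale differences $D_k b$ with respect to a dyadic system. Let $b_{\geq N}:=\sum_{k\geq N}D_k b$ be the fine-scale part. The coarse part $b-b_{\geq N}$ is locally smooth relative to scales $2^{-N}$. Its commutator should satisfy $\lim_n n^{1/d}a_n([T,b-b_{\geq N}])=0$, proved by approximating it in $S^{d,\infty}$ with commutators of compactly supported Lipschitz functions. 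For those functions the singular values are $o(n^{-1/d})$, because their Besov norms $\dot B^{d/p}_{p,p}$ are finite for some $p<d$. Dealing with the separable part $S^{d,\infty}_0$ in this way, the $\liminf$ and $\limsup$ of $[T,b]$ coincide with those of $[T,b_{\geq N}]$ for every $N$.

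For the fine part I would prove the key estimate
\begin{equation*}
  \liminf_{n\to\infty} n^{1/d}a_n([T,b]) \gtrsim \limsup_{N\to\infty}\Big(\sum_{Q\in\mathscr D,\ \ell(Q)\le 2^{-N}}\operatorname{osc}(b,Q)^d\Big)^{1/d}.
\end{equation*}
Its right-hand side is comparable to $\Norm{b}{\dot M^{1,d}}$, since the tail of the oscillation sum over small cubes recovers the full Haj\l asz--Sobolev norm in Ahlfors $d$-regular Poincar\'e spaces; this is the Bourgain--Brezis--Mironescu-type phenomenon behind the condition $q<d$. To prove the estimate, use the non-degeneracy to pick, for each small cube $Q$, a companion cube $\tilde Q$ and unit functions $f_Q,g_Q$ with $|\langle [T,b]f_Q,g_Q\rangle|\gtrsim\operatorname{osc}(b,Q)$. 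A Rochberg--Semmes nearly-weakly-orthonormal-sequence argument then shows that the diagonal of $[T,b]$ along these pairs controls the singular values from below. Restricted to the scale window $[2^{-M},2^{-N}]$, it gives $\Norm{[T,b]}{S^{d,\infty}}$ bounds that are uniform from below at every index $n$ comparable to the number of cubes involved. That the relevant $n$ range over all large integers, and not just a subsequence, is where asymptotic Marcinkiewicz enters: $\ell^{d,\infty}$ lower bounds at two nearby levels interpolate to a $\liminf$ bound.

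I expect the main obstacle to be this last step, converting lower bounds on blocks of cubes into a genuine $\liminf$ and not merely a $\limsup$. Off-diagonal interactions between different scales must be shown to be asymptotically negligible in $S^{d,\infty}$. I would try to split them off as operators whose weak-$d$ norms tend to zero as the scale separation grows, using the strong non-degeneracy to keep the companion cubes well separated.
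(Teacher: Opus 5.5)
There are genuine gaps. The argument for the asymptotic lower bound rests on claims that are false.

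First, commutators with compactly supported nonconstant Lipschitz $b$ do \emph{not} have singular values $o(n^{-1/d})$. Such $b$ lie in $\dot M^{1,d}(\nu)$ with positive norm, so the theorem you are proving (cf.\ Corollary~\ref{cor:new-cutoff}) gives $\liminf_n n^{1/d}a_n([T,b])>0$. Your justification also fails: $[T,b]\in S^p$ for some $p<d$ would force $b$ to be constant by the Janson--Wolff cut-off, since the first-difference Besov space $\dot B^{d/p}_{p,p}$ with $d/p>1$ is trivial. Moreover, the dyadic coarse part $b-b_{\geq N}$ is piecewise constant, not locally smooth. On $\R^d$ its commutator is typically not even compact. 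So the reduction to the fine part collapses.

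Second, your key estimate is wrong as stated. The strong tail $\sum_{\ell(Q)\leq 2^{-N}}\operatorname{osc}(b,Q)^d$ is $+\infty$ for nonconstant $b$; already for smooth compactly supported $b$ on $\R^d$, each generation contributes $\sim\Norm{\nabla b}{L^d}^d$. Your estimate would then give $\liminf_n n^{1/d}a_n([T,b])=\infty$, contradicting the upper bound. The Bourgain--Brezis--Mironescu-type phenomenon available here is of \emph{weak type}. The right quantity is $\liminf_{\lambda\to0}\lambda\cdot(\#\{Q\in\mathscr D:\operatorname{osc}_\nu(b,B_Q)>\lambda\})^{1/d}$, which is $\sim\Norm{b}{\dot M^{1,d}(\nu)}$ by Theorem~\ref{thm:HK-main} and Proposition~\ref{prop:cont-discrete-osc}.

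The step you leave open, converting block-wise sup-type lower bounds into a $\liminf$, is exactly the missing idea. Example~\ref{ex:Hardy} shows that asymptotic information cannot be transferred linearly in general. The paper's mechanism is as follows. There is a sublinear map $[b,T]\mapsto\tilde\Phi([b,T])$ that dominates $\{\operatorname{osc}_\nu(b,B_Q)\}_{Q}$ and is bounded $S^{p,q}\to\ell^{p,q}$ for all $p\in(1,\infty)$. The asymptotic Marcinkiewicz theorem (Theorem~\ref{thm:liminf}, Corollary~\ref{cor:liminf}) then bounds the weak-type $\liminf$ above by $(\liminf_n n^{1/d}a_n([b,T]))^\theta\Norm{[b,T]}{S^{d,\infty}}^{1-\theta}$ with $\theta<1$. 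The uniform upper bound $\Norm{[b,T]}{S^{d,\infty}}\lesssim\Norm{b}{\dot M^{1,d}}$ then lets one divide out $\Norm{b}{\dot M^{1,d}}^{1-\theta}$.

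Your case~(2) upper bound also needs repair. Interpolating in $b$ between Besov spaces fails, because the endpoint at $p<d$ is trivial. Instead, interpolate the linear map $F\mapsto\mathcal I_{V^{-1}F}$, $L^p(V^{-2})\to S^p$, in the kernel variable between exponents $2<p_0<d<p_1$. This, not the $p>d$ endpoint, is where $d>2$ enters. Then take $F=VK\cdot(b(x)-b(y))$, using only $\abs{K}\lesssim V^{-1}$. Finally, bound $\Norm{b(x)-b(y)}{L^{d,\infty}(V^{-2})}\lesssim\Norm{h}{L^d(\mu)}$ directly for a Haj\l{}asz gradient $h$.
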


Some comments are in order:
\begin{enumerate}[\rm(a)]
  \item It should be admitted right away that Theorem \ref{thm:main} does not strictly contain the earlier spectral asymptotic results cited above, where, in place of \eqref{eq:main-asymp}, the corresponding limit exists and the comparison ``$\sim$'' between the left and right-hand sides of \eqref{eq:main} becomes an identity ``$=$'' with an explicit constant. However, relaxing the existence of the limit to the comparable size of the upper and lower limits (which still says that the singular values behave essentially like $n^{-\frac1d}$ as $n\to\infty$), Theorem \ref{thm:main} applies to a far greater class of operators than any of the previous asymptotic results of this type. Aside from the abstract setting of Theorem \ref{thm:main}, the asymptotic relation \eqref{eq:main-asymp} seems to be new even in the concrete case of Riesz transforms on Heisenberg groups, and more generally for non-degenerate, highly regular singular integral operators on Carnot groups, for which the uniform estimate \eqref{eq:main-bound} has been obtained in \cite{FLMSZ} and \cite{LXY}, respectively.
  
  \item The $A_\infty$ relation of $\mu$ and $\nu$ is trivially satisfied for equal measures $\mu=\nu$, and this case already covers many important examples (like the Heisenberg and Carnot groups just mentioend), but the setting with two different measures further increases the scope of the result, notably, to include the Bessel setting \cite{FLSZ}, as observed in \cite{Hyt:Osc}. For the Bessel--Riesz transforms, a sharper version of \eqref{eq:main-asymp} with exact limit is contained in \cite{FLSZ}, but Theorem \ref{thm:main} also applies to any other non-degenerate, H\"older $\eta$-regular singular integrals in the Bessel setting.

  \item The uniform bound \eqref{eq:main-bound} is simply a restatement of \cite[Theorem 1.4]{Hyt:Osc} in case \eqref{it:main-reg}, but appears to be new in case \eqref{it:main-new}. Note that this latter case does {\em not} assume any regularity of the kernel, which seems to be new for the weak-type Schatten bounds even in $\R^d$.
  
  The proof of this case is perhaps even more interesting than the result. All previous proofs of ``$\gtrsim$'' in \eqref{eq:main-bound} depend on somewhat delicate decompositions of the operator $T$: either the {\em nearly weakly orthogonal} (NWO) expansions of Rochberg--Semmes \cite{RS:NWO} (first obtained by local Fourier series expansions on $\R^d$ in \cite{RS:NWO} and later with the help of so-called Alpert bases on Carnot groups in \cite{LXY}), or the {\em dyadic representation theorem} from \cite{Hyt:A2} (first applied to Schatten bounds on $\R^d$ in \cite{WZ:median} and then on doubling metric measure spaces in \cite{Hyt:Sp}).
  
  For related strong-type Schatten $S^p$ estimates with $p>2$, there is a much simpler approach due to Janson--Wolff \cite{JW:82}, originally in $\R^d$, and extended to other settings e.g.\ in \cite[Sec.~4.2]{FLL:23} and \cite[Sec.~2.2]{LXY}. However, the fact that a slight elaboration of their approach can also yield $S^{d,\infty}$ bounds at the critical index, under the assumptions of case \eqref{it:main-new}, seems to have been overlooked before. Note that this case covers, in particular, all Heisenberg groups and more generally all nontrivial Carnot groups (i.e., those that do not reduce to Euclidean spaces), as these are Ahlfors $d$-regular with $d\geq 4$ (cf.~\cite[p.~77]{LXY}). Thus, we obtain a third, different proof of the upper bound in \cite[Theorem 1.2]{LXY}, where the original one in \cite{LXY} was based on Alpert bases, the second one in \cite{Hyt:Sp} on dyadic representation, and the current one on the said elaboration of the Janson--Wolff method.
\end{enumerate}

We record some corollaries of Theorem \ref{thm:main}:

\begin{corollary}\label{cor:new-cutoff}
Let the assumptions of Theorem \ref{thm:main} be satisfied. If 
\begin{equation}\label{eq:lim-cutoff}
  \lim_{n\to\infty} n^{\frac1d}a_n([T,b])=0,
\end{equation}
or more generally if
\begin{equation}\label{eq:new-cutoff}
  \limsup_{n\to\infty}n^{\frac1d}a_n([T,b])<\infty,\qquad
  \liminf_{n\to\infty} n^{\frac1d}a_n([T,b])=0,
\end{equation}
then $b$ is equal to constant almost everywhere.
\end{corollary}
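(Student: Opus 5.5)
The corollary follows from Theorem \ref{thm:main} and a short argument about the Haj\l{}asz--Sobolev space. First, \eqref{eq:lim-cutoff} is a special case of \eqref{eq:new-cutoff}: if the limit is zero, then both the upper and lower limits vanish, and in particular the upper limit is finite. So we only need to treat \eqref{eq:new-cutoff}.

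Assume \eqref{eq:new-cutoff}. The first step is to show that $[T,b]\in S^{d,\infty}(L^2(\mu))$. Since $\limsup_{n\to\infty} n^{\frac1d}a_n([T,b])<\infty$, there are $N$ and $C$ with $n^{\frac1d}a_n([T,b])\le C$ for all $n\ge N$. For the finitely many $n<N$ we need each $a_n([T,b])$ to be finite, which holds once $[T,b]$ is a bounded operator on $L^2(\mu)$. Boundedness is implicit in the hypothesis, since the singular values in \eqref{eq:an(R)} are defined for bounded operators. If one prefers to interpret $a_n$ for possibly unbounded operators via \eqref{eq:an(R)}, then finiteness of $a_N$ already provides a finite rank $F$ with $[T,b]-F$ bounded, so $[T,b]$ is bounded anyway. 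Hence $\sup_n n^{\frac1d}a_n([T,b])<\infty$, which is exactly $[T,b]\in S^{d,\infty}(L^2(\mu))$.

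Now the ``if and only if'' part of Theorem \ref{thm:main} gives $b\in\dot M^{1,d}(\nu)$, and the comparison \eqref{eq:main-asymp} gives
\begin{equation*}
  \Norm{b}{\dot M^{1,d}(\nu)}\lesssim \liminf_{n\to\infty} a_n([T,b];L^2(\mu))\cdot n^{\frac1d}=0 .
\end{equation*}
So $\Norm{b}{\dot M^{1,d}(\nu)}=0$. The last step is to check that a vanishing Haj\l{}asz--Sobolev seminorm forces $b$ to be constant almost everywhere; this is the only point needing an argument beyond quoting the theorem. By definition of $\dot M^{1,d}(\nu)$, the seminorm is the infimum of $\Norm{g}{L^d(\nu)}$ over Haj\l{}asz gradients $g$, meaning $\abs{b(x)-b(y)}\le\rho(x,y)(g(x)+g(y))$ for almost all $x,y$.

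Take gradients $g_k$ with $\Norm{g_k}{L^d(\nu)}\le 2^{-k}$. Then $\sum_k g_k<\infty$ $\nu$-almost everywhere, so $g_k\to0$ $\nu$-almost everywhere. Discarding a null set in $X\times X$, for almost every pair $(x,y)$ we get $\abs{b(x)-b(y)}\le \rho(x,y)\bigl(g_k(x)+g_k(y)\bigr)\to0$. Thus $b(x)=b(y)$ for $\nu\otimes\nu$-almost every pair, and by Fubini $b$ equals a constant $\nu$-almost everywhere. Finally, the $A_\infty$ relation makes $\mu$ and $\nu$ mutually absolutely continuous, so $b$ is also constant $\mu$-almost everywhere. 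This is the natural sense for $b\in L^1_{\loc}(\mu)$.

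The main obstacle is only this bookkeeping: confirming that $\Norm{b}{\dot M^{1,d}(\nu)}=0$ means $b$ is constant for both measures. The spectral content is carried entirely by \eqref{eq:main-asymp}.
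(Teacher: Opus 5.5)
Your proposal is correct and follows essentially the same route as the paper: you reduce to \eqref{eq:new-cutoff}, upgrade the finite upper limit to $[T,b]\in S^{d,\infty}(L^2(\mu))$ via boundedness of $[T,b]$, use \eqref{eq:main} to get $\Norm{b}{\dot M^{1,d}(\nu)}=0$, and conclude constancy as in Lemma \ref{lem:f=0}. Choosing gradients with $\Norm{g_k}{L^d(\nu)}\le 2^{-k}$ instead of passing to an a.e.\ convergent subsequence is only a cosmetic variation, and your transfer from $\nu$ to $\mu$ through the $A_\infty$ relation matches the paper's remark that ``almost everywhere'' is unambiguous.
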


Corollary \ref{cor:new-cutoff} sharpens the Janson--Wolff type cut-off phenomenon (first discovered in \cite{JW:82} for $X=\R^d$, and extended to the setting of Theorem \ref{thm:main} in \cite{Hyt:Osc,Hyt:Sp})
\begin{equation*}
   [b,T]\in S^p(L^2(\mu))\quad
   \Leftrightarrow\quad b=\text{constant a.e.},\qquad\text{for }p\in(0,d],
\end{equation*}
since borh \eqref{eq:lim-cutoff} and \eqref{eq:new-cutoff} are weaker than
$\sum_{n=1}^\infty a_n([T,b])^d<\infty.$ In the special cases, where $T$ is a Dirac operator on $\R^d$ or one of the Bessel--Riesz transforms, similar corollaries are recorded in \cite[Corollary 1.5]{FLSZ} and \cite[Corollary 1.3]{FSZ:23}; for such conclusions, the possible lack of existence of the actual limit in \eqref{eq:main-asymp} is not an issue. 

Another corollary of Theorem \ref{thm:main} is the following. The terminology related to traces is explained in Section \ref{sec:trace}.

\begin{corollary}\label{cor:trace}
Let the assumptions of Theorem \ref{thm:main} be satisfied.
If $$[T,b]\in S^{d,\infty}(L^2(\mu))$$ and $\tau$ is a positive normalised trace on $S^{1,\infty}(L^2(\mu))$, then
\begin{equation*}
  \tau(\abs{[T,b]}^d)\sim\Norm{b}{\dot M^{1,d}(\nu)}^d.
\end{equation*}
\end{corollary}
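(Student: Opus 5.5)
Write $R:=[T,b]$ and $a_n:=a_n(R;L^2(\mu))$. We use the standard facts about positive normalised traces on $S^{1,\infty}$. Such a $\tau$ is a positive linear functional on $S^{1,\infty}$ that vanishes on the trace class and is unitarily invariant. It is normalised on a model operator with singular values $n^{-1}$, for instance $\tau(\operatorname{diag}(1/n))=1$.

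Since $R\in S^{d,\infty}$, we have $\abs{R}^d\in S^{1,\infty}$, with singular values $a_n(\abs{R}^d)=a_n^d$. By Theorem \ref{thm:main}, the upper and lower limits of $n^{1/d}a_n$ are both comparable to $\Norm{b}{\dot M^{1,d}(\nu)}$. Hence there are constants $0<c\le C$, depending only on the implicit constants, and an $N$ such that
\begin{equation*}
  c^d\Norm{b}{\dot M^{1,d}(\nu)}^d\,n^{-1}\le a_n^d\le C^d\Norm{b}{\dot M^{1,d}(\nu)}^d\,n^{-1},\qquad n\ge N.
\end{equation*}
Note that the liminf being comparable is exactly what yields the lower bound for all large $n$: for $n$ large, $n^{1/d}a_n\ge\frac12\liminf_k k^{1/d}a_k$.

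We now compare $\abs{R}^d$ with a diagonal operator. Diagonalise the positive compact operator $\abs{R}^d=\sum_n a_n^d\, e_n\otimes e_n$ in an orthonormal eigenbasis $(e_n)$. Put $D:=\sum_n n^{-1}e_n\otimes e_n$, a positive operator with $\tau(D)=1$, by unitary invariance and the normalisation. Let $P$ be the projection onto $\operatorname{span}\{e_n:n\ge N\}$; then $(I-P)\abs{R}^d$ has finite rank, so $\tau$ vanishes on it. Writing $K:=\Norm{b}{\dot M^{1,d}(\nu)}^d$, the two-sided bound above gives the operator inequalities
\begin{equation*}
  c^dK\,PD\le P\abs{R}^d\le C^dK\,PD.
\end{equation*}
Positivity and linearity of $\tau$, together with $\tau(PD)=\tau(D)=1$ (the difference has finite rank), then yield $c^dK\le\tau(\abs{R}^d)\le C^dK$. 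This is the claim.

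If the paper's notion of $\tau$ does not assume unitary invariance directly, I would use the standard fact that any trace is unitarily invariant, since $\tau(UAU^*)=\tau(A)$ follows from $\tau(AB)=\tau(BA)$. I would also use the alternative route that positive traces are monotone with respect to singular values on positive operators: $0\le A$ with $a_n(A)\le a_n(B)$ implies $\tau(A)\le\tau(B)$. The only point needing care is that the lower bound on $a_n^d$ holds only eventually, which is harmless because finite-rank perturbations are invisible to $\tau$. I expect no real obstacle beyond checking that the normalisation convention in Section \ref{sec:trace} matches the choice of $D$ used here.
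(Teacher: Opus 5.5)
Your proof is correct and is essentially the paper's argument: you diagonalise $\abs{[T,b]}^d$, sandwich it between multiples of $\sum_n \frac1n e_n\otimes e_n$ using the eventual two-sided bounds from Theorem \ref{thm:main}, and conclude from positivity, the normalisation, and the vanishing of $\tau$ on finite-rank operators (Proposition \ref{prop:singular}). The only cosmetic difference is that the paper uses the supremum bound for all $n$ on the upper side and truncates to $\sum_{n>N}$ only on the lower side, whereas you truncate both sides with the projection $P$.
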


When specialised to the concrete cases treated before, Corollary \ref{cor:trace} is in the same spirit but not strictly comparable to \cite[Theorem 1.2]{FLMSZ}, \cite[Corollary 1.4]{FLSZ}, or \cite[Theorem 2]{LMSZ}, which apply to every {\em continuous} normalised trace on $S^{1,\infty}(L^2(\mu))$. Thus, the continuity of the trace assumed in these results is replaced by the positivity assumption in Corollary \ref{cor:trace}. We note that this assumption covers, in particular, all Dixmier traces \cite[p.~xviii]{LSZ:book}. As in Theorem \ref{thm:main}, the main novelty of Corollary \ref{cor:trace} is dealing with a much larger class of operators $T$ than before.

\subsection{Key ingredient of the proof: asymptotic interpolation}

Let us briefly discuss the proof of Theorem \ref{thm:main}. Given that \eqref{eq:main-bound} is already known (in case \eqref{it:main-reg} of Theorem \ref{thm:main}) from \cite{Hyt:Osc,Hyt:Sp}, and we have the obvious inequalities $\liminf\leq\limsup\leq\sup$, what remains to prove, in order to obtain \eqref{eq:main}, is the bound
\begin{equation}\label{eq:intro-to-prove}
   \Norm{b}{\dot M^{1,d}(\nu)}\lesssim\liminf_{n\to\infty} a_n([T,b];L^2(\mu))\cdot n^{\frac1d}.
\end{equation}
This can be thought of as a quantised version of the following function space norm inequality due to \cite{CST,Frank} for $\ud\nu=\ud x$ on $\R^d$ and \cite{HK:W1p} in general:
\begin{equation}\label{eq:intro-Frank}
   \Norm{b}{\dot M^{1,d}(\nu)}\lesssim
   \liminf_{\lambda\to 0}\lambda\cdot\nu_d(\{(x,t)\in X\times(0,\infty):
     \operatorname{osc}_\nu(b;B(x,t))>\lambda\})^{\frac1d},
\end{equation}
where $\ud\nu_d(x,t):=\ud\nu(x)t^{-d-1}\ud t$ and
\begin{equation*}
  \operatorname{osc}_\nu(b;B):=\inf_{c\in\C}\fint_B\abs{b-c}\ud\nu
\end{equation*}
is the usual mean oscillation over a ball $B$. Thus, a potential strategy to \eqref{eq:intro-to-prove} is to bound the right-hand side of \eqref{eq:intro-Frank} in terms of the right-hand side of \eqref{eq:intro-to-prove}. It turns out that this is essentially possible, but with an interesting twist.

Under the usual assumptions of the Marcinkiewiz interpolation theorem concerning a sublinear operator $S$ (see Theorem \ref{thm:liminf} for details), we will obtain new kinds of asymptotic conclusions of the form
\begin{equation}\label{eq:intro-liminf}
  \liminf_{\lambda\to 0}\lambda\cdot\nu(\{\abs{Sf}>\lambda\})^{\frac1p}
  \lesssim\Big(\liminf_{\lambda\to 0}\lambda\cdot\mu(\{\abs{f}>\lambda\})^{\frac1p}\Big)^\theta
    \Norm{f}{L^{p,\infty}(\mu)}^{1-\theta}
\end{equation}
for some $\theta\in(0,1)$ depending on $p$ and the other exponents in the interpolation assumptions, and the right-hand side of \eqref{eq:intro-liminf} may also be replaced by its noncommutative analogue involving $S^{p,\infty}$ in place of $L^{p,\infty}$ (Corollary \ref{cor:liminf}). We show in Example \ref{ex:Hardy} that this is the best kind of conclusion that one can hope for in general; in particular, a pure $\liminf$ estimate with $\theta=1$ in \eqref{eq:intro-liminf} is false in general, even for very natural operators $S$.

But \eqref{eq:intro-liminf} turns out to be enough for our application.
Namely, it follows that
\begin{subequations}\label{eq:intro-bootstrap}
\begin{align}
      \Norm{b}{\dot M^{1,d}(\nu)}
      &\overset{\text{\eqref{eq:intro-Frank}}}\lesssim
   \liminf_{\lambda\to 0}\lambda\cdot
   \nu_d(\{(x,t): \operatorname{osc}_\nu(b;B(x,t))>\lambda\})^{\frac1d}
   \label{eq:M<Osc}
    \\
   &\overset{\text{\eqref{eq:intro-liminf}}}{\lesssim}\Big(\liminf_{n\to\infty} a_n([T,b];L^2(\mu))\cdot n^{\frac1d}\Big)^\theta
   \Norm{[T,b]}{S^{d,\infty}(L^2(\mu))}^{1-\theta}
   \label{eq:Osc<[T,b]}
    \\
   &\overset{\text{\eqref{eq:main-bound}}}{\lesssim}\Big(\liminf_{n\to\infty} a_n([T,b];L^2(\mu))\cdot n^{\frac1d}\Big)^\theta   \Norm{b}{\dot M^{1,d}(\nu)}^{1-\theta},
   \label{eq:[T,b]<M}
\end{align}
\end{subequations}
and we obtain the desired \eqref{eq:intro-to-prove} by rearranging. In this sketch, the application of ``\eqref{eq:intro-liminf}'' in the middle step is abbreviation for ``a non-commutative extension of \eqref{eq:intro-liminf}, applied to a suitable operator $S$ that is implicit in the equivalence \eqref{eq:main-bound} in \cite{Hyt:Osc,Hyt:Sp} and worked out explicitly in Section \ref{sec:implicit}.''

Thus, the key new tool to reach our goal is the asymptotic interpolation estimate \eqref{eq:intro-liminf}, which allows us to pass not only global size but also asymptotic information of a function through a sublinear operator under the usual assumptions of Marcinkiewicz interpolation. This seems particularly curious when ``asymptotic information'' is understood in terms of $\liminf$, as above, which seems conceptually far from norm-like quantities that are typically controlled by various interpolation theorems: for example, two nonnegative sequences with $\liminf a_n=0=\liminf b_n$ can very easily have $\liminf(a_n+b_n)>0$. A $\limsup$ variant of \eqref{eq:intro-liminf} (also contained in Theorem \ref{thm:liminf}) is somewhat closer in spirit to classical interpolation theorems, and slightly easier to prove, but it would only give a weaker form of Theorem \ref{thm:main}.

Besides its key role behind Theorem \ref{thm:main}, we believe that the asymptotic interpolation bound \eqref{eq:intro-liminf} (and its variants in Theorem \ref{thm:liminf}, including ones with $\lambda\to\infty$) will be useful in other applications, especially given the increasing recent role of such asymptotic weak-type quantities in the ``surprising formulas for Sobolev norms'' after Brezis, Van Schaftingen, and Yung \cite{BVY}; see \cite{BSVY,DM:22,ZLYYZ} for a sample of variants and extensions. With estimates like \eqref{eq:intro-liminf}, these asymptotic quantities can also be passed through operators in a controlled way, and \eqref{eq:intro-bootstrap} serves as a model on how to eliminate the parameter $\theta$ from such estimates under appropriate additional information. In essence, \eqref{eq:intro-bootstrap} provides a template for proving results of the following type (under relevant technical assumptions that need to be supplied):
\begin{quote}
  ``If the $L^{p,\infty}$ norms of two objects are equivalent, and one of them is also equivalent to the corresponding asymptotic quantity, then the other one satisfies a similar asymptotic equivalence as well.''
\end{quote}

The rest of the paper is organised as follows. In Section \ref{sec:prelim}, we collect the definitions of the various notions that were already used in this Introduction, and we also detail the proofs of Corollaries \ref{cor:new-cutoff} and \ref{cor:trace}, assuming Theorem \ref{thm:main}.  In Section \ref{sec:interpol}, we prove the new asymptotic interpolation theorem, the key tool behind Theorem \ref{thm:main}. In Section \ref{sec:implicit}, we identify the relevant sublinear operators (implicit in some previous related works) to which we apply the asymptotic interpolation theorem to derive a preliminary version of \eqref{eq:Osc<[T,b]} with a suitable discrete weak-type oscillation norm in place of that in \eqref{eq:M<Osc}. The relation of this discrete oscillation norm with the one in \eqref{eq:M<Osc} is then sorted out in Section \ref{sec:Osc}, where we complete the proof of case \eqref{it:main-reg} of Theorem \ref{thm:main}. The modifications for case \eqref{it:main-new} are handled in Section \ref{sec:upper}, where we present the extension of the Janson--Wolff method to the weak-type commutator bounds at the critical index.

\section{Definitions and preliminaries}\label{sec:prelim}

In this section, we recall the relevant definitions that we already used in Theorem \ref{thm:main}. We also prove Corollaries \ref{cor:new-cutoff} and \ref{cor:trace}, assuming Theorem \ref{thm:main}.

\subsection{Metric measure spaces}

A measure $\mu$ on a metric measure space $(X,\rho)$ is called doubling, is all metric balls $B(x,r)$ have finite positive measure and
\begin{equation*}
  \mu(B(x,2r))\lesssim \mu(B(x,r))
\end{equation*}
Two such measures $\mu,\nu$ are said to satisfy the $A_\infty$-relation, if there are constants $\eps,\delta>0$ such that for all metric balls $B$ and their measurable subsets $E\subset B$, we have
\begin{equation}\label{eq:Ainfty}
  \mu(E)\leq\eps\mu(B)\quad\Rightarrow\quad\nu(E)\leq(1-\delta)\nu(B).
\end{equation}
It is easy to check that this relation is symmetric with respect to $\mu$ and $\nu$. When two measures satisfy this relation, they are mutually absolutely continuous, and hence we can speak of ``almost everywhere'' unambiguously, even in the presence of these two measures.

A space $(X,\rho,\nu)$ is called Ahlfors $d$-regular if $\nu(B(x,r))\sim r^d$ for all metric balls $B(x,r)$.
If a metric space $(X,\rho)$ supports such a measure $\nu$ (as it is assumed in Theorem \ref{thm:main}), then it has the following geometric property. Let $(x_i)_{i=1}^N$ be $r$-separated points in a ball $B(x,R)$, where $r\leq R$. Then the number of such points satisfies $N\lesssim(R/r)^d$: the balls $B(x_i,\frac12 r)\subseteq B(x,R+\frac12 r)$ are disjoint, so that
\begin{equation*}
  R^d\sim(R+\tfrac12 r)^d\sim \nu(B(x,R+\tfrac12))\geq\sum_{i=1}^N\nu(B(x_i,\tfrac12r))
  \sim N(\tfrac12r)^d\sim N r^d.
\end{equation*}

A metric measure space $(X,\rho,\nu)$ is said to satisfy the $q$-Poincar\'e inequality if there is a constant $\lambda\in[1,\infty)$ such that all Lipschitz functions $f$ satisfy
\begin{equation*}
  \inf_{c\in\C}\fint_{B(x,r)}\abs{f-c}\ud\nu
  \lesssim r\Big(\fint_{B(x,\lambda r)}(\operatorname{lip}f)^q\ud\nu\Big)^{\frac1q},
\end{equation*}
where
\begin{equation*}
  \operatorname{lip}f(y):=\liminf_{t\to 0}\sup_{z\in B(y,t)}\frac{\abs{f(y)-f(z)}}{t}.
\end{equation*}

A function $h:X\to[0,\infty]$ is called a Haj\l{}asz upper gradient of $f$ if
\begin{equation*}
  \abs{f(x)-f(y)}\leq\rho(x,y)(h(x)+h(y))
\end{equation*}
for almost all $x,y\in X$. The homogeneous Haj\l{}asz--Sobolev norm of $f$ is defined by
\begin{equation*}
  \Norm{f}{\dot M^{1,p}(\mu)}
  :=\inf\Big\{\Norm{h}{L^p(\mu)}: h\text{ is a Haj\l{}asz upper gradient of }f\Big\}.
\end{equation*}

\begin{lemma}\label{lem:f=0}
If $\Norm{f}{\dot M^{1,p}(\mu)}=0$, then $f=0$ almost everywhere.
\end{lemma}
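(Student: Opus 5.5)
The plan is to approximate by Haj\l{}asz upper gradients of small norm and pass to an almost everywhere limit in the pointwise inequality defining them. This shows that $f(x)=f(y)$ for almost all pairs $(x,y)$, hence that $f$ equals a constant $c$ almost everywhere. The final step identifies this constant with $0$, and it is the delicate point: it depends on the convention for $\dot M^{1,p}(\mu)$, as explained below.

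By the definition of $\Norm{f}{\dot M^{1,p}(\mu)}$ as an infimum, I will pick Haj\l{}asz upper gradients $h_k$ of $f$ with $\Norm{h_k}{L^p(\mu)}<2^{-k}$. Then $\sum_k\Norm{h_k}{L^p(\mu)}^p<\infty$, so $\int\sum_k h_k^p\ud\mu<\infty$, and hence $h_k\to0$ pointwise $\mu$-almost everywhere. For each $k$ there is an exceptional $\mu\otimes\mu$-null set $N_k\subset X\times X$ off which
\begin{equation*}
\abs{f(x)-f(y)}\leq\rho(x,y)\big(h_k(x)+h_k(y)\big).
\end{equation*}
Let $Z$ be the $\mu$-null set of points where $h_k\not\to0$, and take the countable union $N=\bigcup_k N_k\cup(Z\times X)\cup(X\times Z)$, which is still $\mu\otimes\mu$-null. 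Letting $k\to\infty$ gives $f(x)=f(y)$ for all $(x,y)\notin N$.

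Next, Fubini shows that for $\mu$-almost every $y_0$ the slice $\{x:(x,y_0)\in N\}$ is $\mu$-null. Fixing one such $y_0$ gives $f(x)=c:=f(y_0)$ for $\mu$-almost every $x$. Finiteness of $c$ is not an issue, because $f\in L^1_{\loc}(\mu)$ is finite almost everywhere.

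The remaining step, from ``$f=c$ a.e.'' to ``$f=0$ a.e.'', is where the content lies. Every constant has the zero function as a Haj\l{}asz upper gradient, so it has vanishing $\dot M^{1,p}$-seminorm. The conclusion ``$f=0$'' therefore has to be read in the homogeneous space $\dot M^{1,p}(\mu)$, whose elements are functions modulo additive constants (exactly as in $\dot W^{1,d}$). Under that convention the constant $c$ represents the zero element, and this is the form in which the lemma is used later, namely to conclude that $b$ is trivial.

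If one instead wants literal vanishing, one needs a normalisation that excludes nonzero constants. For example, under Ahlfors regularity $\nu(X)=\infty$, so $\mu(X)=\infty$ by the doubling and $A_\infty$ assumptions. Hence if $f$ is additionally known to lie in some $L^r(\mu)$ with $r<\infty$, or to vanish at infinity, then $c=0$ is forced by $\int\abs{c}^r\ud\mu<\infty$. I expect this interpretive step to be the only real obstacle; the approximation and Fubini argument is routine.
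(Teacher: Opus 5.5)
Your proposal is correct and follows the same route as the paper: choose Haj\l{}asz upper gradients with $\Norm{h_k}{L^p(\mu)}\to0$, pass to a subsequence converging to $0$ almost everywhere, and let $k\to\infty$ in the pointwise inequality to get $f(x)=f(y)$ for almost all pairs. Your countable-union and Fubini step only makes the exceptional-set bookkeeping explicit, and your reading of the conclusion as ``$f$ is constant a.e.'' (zero modulo constants) is exactly what the paper's proof establishes and how the lemma is used in the proof of Corollary~\ref{cor:new-cutoff}.
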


\begin{proof}
By definition, we can find Haj\l{}asz upper gradients $h_n$ of $f$ with $\Norm{h_n}{L^p(\mu)}\to 0$. Passing to a subsequence if necessary, we can assume that $h_n\to 0$ pointwise almost everywhere. Hence
\begin{equation*}
  \abs{f(x)-f(y)}\leq\rho(x,y)(h_n(x)+h_n(y))\to 0,
\end{equation*}
and thus $f(x)=f(y)$ almost everywhere.
\end{proof}

\begin{proof}[Proof of Corollary \ref{cor:new-cutoff}]
It is evident that \eqref{eq:lim-cutoff} implies \eqref{eq:new-cutoff}, so it suffices to work under the assumption \eqref{eq:new-cutoff}. Let us denote $R:=[T,b]$ and $\mathcal H:=L^2(\mu)$. By \eqref{eq:new-cutoff}, if follows in particular that $a_{n_0}(R)<\infty$ for some $n_0$. Thus, by definition \eqref{eq:an(R)}, there is $F\in\bddlin(\mathcal H)$ with $\operatorname{rank}F<n_0$ such that $\Norm{R-F}{\bddlin(\mathcal H)}<a_{n_0}(R)+1$, and hence $\Norm{R}{\bddlin(\mathcal H)}\leq\Norm{F}{\bddlin(\mathcal F)}+a_{n_0}(R)+1<\infty$. Now, directly from \eqref{eq:new-cutoff}, we know that $\sup_{n>N}a_n(R)\cdot n^{\frac1d}<\infty$ for some $N\in\N$. On the other other hand, by the previous observations, we also have
\begin{equation*}
  \sup_{1\leq n\leq N}a_n(R)\cdot n^{\frac1d}
  \leq \Norm{R}{\bddlin(\mathcal H)}N^{\frac1d}<\infty.
\end{equation*}
Hence $\sup_{n\geq 1}a_n(R)\cdot n^{\frac1d}<\infty$, and thus $R=[b,T]\in S^{d,\infty}(L^2(\mu))$.

Under this condition (and the assumptions of Theorem \ref{thm:main}), we have \eqref{eq:main}, and hence in particular
\begin{equation*}
  \Norm{b}{\dot M^{1,d}(\nu)}
  \sim \liminf_{n\to\infty } n^{\frac1d} a_n([T,b])=0.
\end{equation*}
The vanishing of $\Norm{b}{\dot M^{1,d}(\nu)}$ implies that $b$ is equal to constant almost everywhere by Lemma \ref{lem:f=0}, and this completes the proof of Corollary \ref{cor:new-cutoff}
\end{proof}

\subsection{Kernels}

A function $K$ defined on $\{(x,y)\in X\times X:x\neq y\}$
is said to be a {\em $\mu$-singular kernel} if
\begin{equation*}
  \abs{K(x,y)}\lesssim\frac{1}{V(x,y)},\qquad V(x,y):=\mu(B(x,\rho(x,y))).
\end{equation*}
A $\mu$-singular kernel is called
\begin{enumerate}[\rm(1)]
  \item {\em $\omega$-regular}, for a nonnegative nondecreasing function $\omega$, if
\begin{equation*}
  \abs{K(x,y)-K(x',y)}
  +\abs{K(y,x)-K(y,x')}
  \lesssim\frac{1}{V(x,y)}\omega\Big(\frac{\rho(x,x')}{\rho(x,y)}\Big)
\end{equation*}
for all $x,x',y\in X$ such that $\rho(x,x')\ll\rho(x,y)$;
 \item {\em H\"older $\eta$-regular}, if it is $\omega$-regular with $\omega(t)=t^\eta$;
  \item {\em non-degenerate}, if for every $x\in X$ and $r>0$, there exists $y\in X$ at distance $\rho(x,y)\sim r$ such that
\begin{equation*}
  \abs{K(x,y)}+\abs{K(y,x)}\gtrsim\frac{1}{V(x,y)};
\end{equation*}
  \item {\em strongly non-degenerate}, if there is a constant $\alpha>0$ such that for every $x_0\in X$ and $r>0$, there exists $y_0\in X$ at distance $\rho(x_0,y_0)\sim r$ and $v\in\C$ with $\abs{v}=1$ such that
\begin{equation*}
  \abs{K(x,y)}\gtrsim\frac{1}{V(x,y)},\qquad
  \arg(\bar v K(x,y))\leq\frac{\pi}{9}
\end{equation*}
either for all $(x,y)\in B(x_0,\alpha r)\times B(y_0,\alpha  r)$ or for all transposed pairs $(y,x)\in B(x_0,\alpha r)\times B(y_0,\alpha r)$.
\end{enumerate}

\begin{lemma}\label{lem:reg=strong}
Let $K$ be an $\omega$-regular $\mu$-singular kernel, where $\lim_{t\to 0}\omega(t)=0$. Then $K$ is non-degenerate if and only if it is strongly non-degenerate.
\end{lemma}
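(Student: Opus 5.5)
The direction ``strongly non-degenerate $\Rightarrow$ non-degenerate'' needs no regularity. Given $x_0$ and $r$, the definition supplies $y_0$ with $\rho(x_0,y_0)\sim r$ for which $\abs{K(x,y)}\gtrsim 1/V(x,y)$ holds either on $B(x_0,\alpha r)\times B(y_0,\alpha r)$ or on its transpose. Taking $(x,y)=(x_0,y_0)$, which lies in this product, gives $\abs{K(x_0,y_0)}+\abs{K(y_0,x_0)}\gtrsim 1/V(x_0,y_0)$, which is exactly non-degeneracy at scale $r$.

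For the converse, I fix $x_0$ and $r$. Non-degeneracy gives $y_0$ with $\rho(x_0,y_0)\sim r$ and $\abs{K(x_0,y_0)}+\abs{K(y_0,x_0)}\gtrsim 1/V(x_0,y_0)$, so one of the two terms is at least half of this. Say it is $\abs{K(x_0,y_0)}\ge c/V(x_0,y_0)$; the other case is symmetric, giving the transposed alternative. I set $v:=K(x_0,y_0)/\abs{K(x_0,y_0)}$. For $x\in B(x_0,\alpha r)$ and $y\in B(y_0,\alpha r)$ with $\alpha$ small, I move one variable at a time: $\abs{K(x,y)-K(x_0,y_0)}\le\abs{K(x,y)-K(x_0,y)}+\abs{K(x_0,y)-K(x_0,y_0)}$. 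Since $\rho(x,x_0)\le\alpha r\ll\rho(x_0,y)\sim r$, the first term is controlled by the regularity in the first variable. The second term is controlled by the regularity in the second variable (the $K(y,x)-K(y,x')$ part of the definition, with roles relabelled), using $\rho(y,y_0)\le\alpha r\ll\rho(x_0,y_0)$. The doubling property gives $V(x_0,y)\sim V(x_0,y_0)\sim\mu(B(x_0,r))$, with constants independent of $\alpha\le1/10$, say. Note that the symmetric regularity in the definition is written with the base point in the first slot, so for the second-variable estimate one needs $V(y,x_0)\sim V(x_0,y)$. This follows from doubling, since $B(y,\rho)\subset B(x_0,2\rho)$ and vice versa. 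Altogether,
\begin{equation*}
  \abs{K(x,y)-K(x_0,y_0)}\le \frac{C\,\omega(C\alpha)}{\mu(B(x_0,r))}.
\end{equation*}

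Since $\omega(t)\to0$ as $t\to0$, I choose $\alpha$ so small that $C\omega(C\alpha)\le\delta c'$, where $c'/\mu(B(x_0,r))\le\abs{K(x_0,y_0)}$ and $\delta$ is a small absolute number. Writing $K(x,y)=K(x_0,y_0)+E$ with $\abs{E}\le\delta\abs{K(x_0,y_0)}$ then gives two conclusions. First, $\abs{K(x,y)}\ge(1-\delta)\abs{K(x_0,y_0)}\gtrsim 1/V(x,y)$, using doubling once more to compare $V(x,y)$ with $\mu(B(x_0,r))$. Second, $\bar vK(x,y)=\abs{K(x_0,y_0)}+\bar vE$ lies in a disc around a positive real number of relative radius $\delta$, so $\abs{\arg(\bar vK(x,y))}\le\arcsin\delta\le\pi/9$ once $\delta\le\sin(\pi/9)$. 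The choice of $\alpha$ depends only on $\omega$ and the implicit constants, hence uniformly on $x_0$ and $r$, as required.

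The only delicate point is the bookkeeping of comparability. The side condition $\rho(x,x')\ll\rho(x,y)$ in the regularity assumption must be met, which holds for small $\alpha$. The volumes $V(\cdot,\cdot)$ at the different nearby points must all be comparable to $\mu(B(x_0,r))$, and doubling handles this with constants independent of $\alpha$.
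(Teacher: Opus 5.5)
Your proof is correct. The paper does not argue this directly. It defers to the analogous statement for fractional kernels, \cite[Lemma 10.2]{HW:frac}, and remarks that the proof carries over unchanged, formally as the case $\eps=0$ there.

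Your perturbation argument is a self-contained substitute for that citation:
\begin{enumerate}
  \item anchor at the pair $(x_0,y_0)$ given by non-degeneracy;
  \item move one variable at a time, using the two halves of $\omega$-regularity, to get $\abs{K(x,y)-K(x_0,y_0)}\le\delta\abs{K(x_0,y_0)}$ on $B(x_0,\alpha r)\times B(y_0,\alpha r)$;
  \item read off both the size bound and $\abs{\arg(\bar v K(x,y))}\le\arcsin\delta$.
\end{enumerate}
This route makes explicit that only $\omega(t)\to 0$, regularity in each variable, and doubling of $\mu$ are used. It also shows that the resulting $\alpha$ is uniform in $x_0$ and $r$.

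Two bookkeeping details should be stated more precisely.
\begin{itemize}
  \item $\alpha$ must be small relative to the lower constant $c_1$ in $\rho(x_0,y_0)\geq c_1 r$, not merely $\alpha\le 1/10$. This keeps $\rho(x_0,y)\geq(c_1-\alpha)r$ comparable to $r$ and guarantees the side condition $\rho(x,x')\ll\rho(x,y)$ in the regularity hypothesis.
  \item In the easy direction, the transposed alternative gives $\abs{K(y_0,x_0)}\gtrsim 1/V(y_0,x_0)$. Concluding non-degeneracy from this needs the doubling comparison $V(y_0,x_0)\sim V(x_0,y_0)$, which you only invoke later in the converse.
\end{itemize}
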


\begin{proof}
For a variant of the same definitions involving fractional (instead of singular) kernels, this is \cite[Lemma 10.2]{HW:frac}. The proof extends to singular kernels without changes; in fact, it formally a special case taking $\eps=0$ in the notation of \cite{HW:frac}, although only $\eps>0$ is explicitly considered there.
\end{proof}

Some results can proved for strongly non-degenerate kernels without assuming any regularity. Lemma \ref{lem:reg=strong} shows that they are not less general than corresponding results for regular non-degenerate kernels.

\subsection{Traces}\label{sec:trace}

We recall the some notions and results related to traces. These are only needed for Corollary \ref{cor:trace} and the related discussion.

Let $\mathcal J\subseteq\bddlin(\mathcal H)$ be a two-sided ideal (i.e., if $A\in\mathcal J$ and $S,T\in\bddlin(\mathcal H)$, then $SAT\in\mathcal J$).
Following \cite[Definition 1.2.10]{LSZ:book}, a {\em trace} on $\mathcal J$ is a linear functional $\tau:\mathcal J\to\C$ such that $\tau(UAU^*)=\tau(A)$ for all $A\in \mathcal J$ and all unitary operators $U\in\bddlin(\mathcal H)$. A trace $\tau$ is called
\begin{enumerate}[\rm(1)]
  \item {\em positive} if $\tau(A)\geq 0$ whenever $A\geq 0$ in the sense of positive operators;
  \item {\em singular} if $\tau(A)=0$ whenever $A$ is a finite-rank operator \cite[Definition 1.3.2]{LSZ:book}.
\end{enumerate}

Following \cite[page 186]{LSZ:book}, a trace $\tau$ on $\mathcal J=S^{1,\infty}(\mathcal H)$ is called {\em normalised}, if 
\begin{equation}\label{eq:normalised}
  \tau\Big(\sum_{n=1}^\infty \frac{1}{n}e_n\otimes e_n\Big)=1,
\end{equation}
for some (and thus, by unitary equivalence, every) orthonormal basis $(e_n)_{n=1}^\infty$.

\begin{proposition}[\cite{LSZ:book}, Corollary 3.5.4]\label{prop:singular}
Every trace on $S^{1,\infty}(\mathcal H)$ is singular.
\end{proposition}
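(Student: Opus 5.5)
The plan is to reduce to rank-one projections and then use a self-similar diagonal operator in $S^{1,\infty}(\mathcal H)$, together with linearity and unitary invariance, to force $\tau$ to vanish on such projections. Throughout, $\mathcal H$ is infinite-dimensional, since otherwise $S^{1,\infty}(\mathcal H)$ is just the full matrix algebra and the statement is not the intended one.

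\emph{Step 1: reduction to rank-one projections.} Any finite-rank operator is a finite sum of rank-one operators $e\otimes h$. By polarisation,
\begin{equation*}
  e\otimes h=\frac14\sum_{k=0}^{3} i^k\,(e+i^k h)\otimes(e+i^k h),
\end{equation*}
and each $u\otimes u$ is a nonnegative multiple of a rank-one orthogonal projection. Any two rank-one projections are unitarily equivalent in infinite dimensions, so by unitary invariance $\tau$ takes a single value $c$ on all of them. By linearity it therefore suffices to show $c=0$.

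\emph{Step 2: a self-similar operator.} Fix a closed subspace $\mathcal H_0\subseteq\mathcal H$ with $\dim\mathcal H_0=\dim\mathcal H_0^\perp=\infty$ and an orthonormal sequence in $\mathcal H_0$. Group it into consecutive blocks of sizes $2^k$, $k\geq0$, and let $\Pi_k$ be the orthogonal projection onto the $k$th block. Set
\begin{equation*}
  E:=\sum_{k=0}^\infty 2^{-k}\Pi_k .
\end{equation*}
Its singular values are $2^{-k}$, each with multiplicity $2^k$. Hence $a_n(E)\sim 1/n$ and $E\in S^{1,\infty}(\mathcal H)$.

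For $k\geq1$, split each block in half: $\Pi_k=\Pi_k'+\Pi_k''$, with $\operatorname{rank}\Pi_k'=\operatorname{rank}\Pi_k''=2^{k-1}$. Define
\begin{equation*}
  E_1:=\sum_{k\ge1}2^{-(k-1)}\Pi_k',\qquad E_2:=\sum_{k\ge1}2^{-(k-1)}\Pi_k''.
\end{equation*}
Then $E=\Pi_0+\tfrac12(E_1+E_2)$.

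Each $E_j$ is a positive diagonal operator with eigenvalues $2^{-m}$ of multiplicity $2^m$, $m\ge0$, exactly like $E$. Moreover $E$, $E_1$, $E_2$ all have infinite-dimensional kernels, because each contains $\mathcal H_0^\perp$. Matching the orthonormal eigenbases, and matching orthonormal bases of the kernels, produces unitaries $U_j$ with $U_jEU_j^*=E_j$. Therefore $\tau(E_1)=\tau(E_2)=\tau(E)$.

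\emph{Step 3: conclusion.} Applying linearity to the identity from Step 2 gives
\begin{equation*}
  \tau(E)=\tau(\Pi_0)+\tfrac12\big(\tau(E)+\tau(E)\big)=c+\tau(E).
\end{equation*}
Hence $c=0$, and by Step 1 $\tau$ vanishes on every finite-rank operator.

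The only delicate point is the unitary equivalence in Step 2. In particular, one must make sure the kernels have equal dimension, including in the non-separable case; reserving the infinite-dimensional complement $\mathcal H_0^\perp$ handles this. Everything else is formal linear algebra.
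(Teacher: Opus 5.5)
\textbf{Verdict.} Your proof is correct. The paper gives no argument of its own for this statement. It only quotes it as Corollary~3.5.4 of the monograph of Lord, Sukochev and Zanin, where it is obtained within the general theory of traces on operator ideals developed there.

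\textbf{What your argument does.} You give a self-contained replacement that uses only the two defining properties of a trace, linearity and unitary invariance. Polarisation reduces everything to the single value $c=\tau(P)$ on rank-one projections. The self-similar operator $E$ (eigenvalue $2^{-k}$ with multiplicity $2^k$, so $a_n(E)\sim 1/n$) satisfies $E=\Pi_0+\tfrac12(E_1+E_2)$ with $E_1,E_2$ unitarily equivalent to $E$, and this forces $c=0$.

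\textbf{What each route buys.} Your route is transparent. The only feature of $S^{1,\infty}(\mathcal H)$ that enters is that it contains $E$. So the same proof shows that every trace on every two-sided ideal containing $S^{1,\infty}(\mathcal H)$ is singular. It also brings out the hypothesis $\dim\mathcal H=\infty$, which the paper leaves implicit. In finite dimensions $S^{1,\infty}(\mathcal H)=\bddlin(\mathcal H)$ carries the usual trace, and the statement fails; in the paper's setting $L^2(\mu)$ is infinite-dimensional, so nothing is lost there. The citation costs nothing and comes with the stronger conclusions of the general theory. One example is that traces on $S^{1,\infty}(\mathcal H)$ vanish on the whole trace class $S^1(\mathcal H)$. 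A variant of your trick reaches that as well: replace $\Pi_0$ by a positive diagonal trace-class operator $D$ with decreasing eigenvalues, and $E$ by $\sum_{k\geq0}2^{-k}D^{(k)}$, where $D^{(k)}$ repeats each eigenvalue of $D$ exactly $2^k$ times.

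\textbf{One point to state explicitly.} In the non-separable case, knowing the kernels are infinite-dimensional does not by itself give the unitaries $U_j$; you need $\dim\ker E_j=\dim\ker E$. This holds because $\ker E_j=\ker E\oplus S_j$, where $S_j$ is separable (spanned by the eigenvectors that $E$ uses and $E_j$ does not). Since $\ker E\supseteq\mathcal H_0^\perp$ is infinite-dimensional, adding $S_j$ does not change the Hilbert dimension. Your closing remark alludes to this, but it deserves a sentence in the proof.
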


We can now give:

\begin{proof}[Proof of Corollary \ref{cor:trace}]
Since $[T,b]\in S^{d,\infty}(L^2(\mu))$ by assumption, we have
\begin{equation*}
  \abs{[T,b]}^d
  =\sum_{n=1}^\infty a_n([T,b])^d e_n\otimes e_n
\end{equation*}
for some orthonormal basis $(e_n)_{n=1}^\infty$. By \eqref{eq:main}, it follows that
\begin{equation*}
     0\leq a_n([T,b])\cdot n^{\frac1d}
     \begin{cases} \leq c_1\Norm{b}{\dot M^{1,d}(\nu)}, & \text{for all }n\geq 1, \\
     \geq c_0\Norm{b}{\dot M^{1,d}(\nu)}, &\text{for all }n>N,\end{cases}
\end{equation*}
for some constants $c_0,c_1$ and $N\in\N$. Hence, in the sense of positive operators, we have
\begin{equation*}
   c_0^d\Norm{b}{\dot M^{1,d}(\nu)}^d\sum_{n>N}\frac 1n e_n\otimes e_n
   \leq \abs{[T,b]}^d
   \leq c_1^d\Norm{b}{\dot M^{1,d}(\nu)}^d\sum_{n=1}^\infty \frac 1n e_n\otimes e_n.
\end{equation*}
Applying the trace $\tau$, it follows from the assumed positivity that
\begin{equation*}
   c_0^d\Norm{b}{\dot M^{1,d}(\nu)}^d\tau\Big(\sum_{n>N}\frac 1n e_n\otimes e_n\Big)
   \leq \tau(\abs{[T,b]}^d)
   \leq c_1^d\Norm{b}{\dot M^{1,d}(\nu)}^d
   \tau\Big(\sum_{n=1}^\infty \frac 1n e_n\otimes e_n\Big).
\end{equation*}
By the definition of a normalised trace, \eqref{eq:normalised}, the right-most factor above is equal to~$1$. Combining this with the fact that every trace on $S^{1,\infty}(L^2(\mu))$ is singular (Proposition \ref{prop:singular}), it also follows that
\begin{equation*}
  \tau\Big(\sum_{n>N} \frac1n e_n\otimes e_n\Big)
  =\tau\Big(\sum_{n=1}^\infty \frac 1n e_n\otimes e_n\Big)
  -\tau\Big(\sum_{n=1}^N \frac1n e_n\otimes e_n\Big)
  =1-0=1.
\end{equation*}
Substituting back, we obtain
\begin{equation*}
  c_0^d\Norm{b}{\dot M^{1,d}(\nu)}^d
   \leq \tau(\abs{[T,b]}^d)
   \leq c_1^d\Norm{b}{\dot M^{1,d}(\nu)}^d,
\end{equation*}
which is precisely the claim of Corollary \ref{cor:trace}.
\end{proof}

\section{Asymptotic interpolation}\label{sec:interpol}

The goal of this section is to prove the following variant of the Marcinkiewicz interpolation theorem involving asymptotic quantities (lower and upper limits) in place of the usual supremums defining weak-type norms. This is the key tool in our proof of Theorem \ref{thm:main}, but it may also have independent interest as a new kind of interpolation theorem. This section is self-contained and can also be read independently of the rest of the paper. After the proof of Theorem \ref{thm:liminf}, we demonstrate the optimality of its conclusions in Example \ref{ex:Hardy}. At the end of the section, we provide a noncommutative version of Theorem \ref{thm:liminf} in Corollary \ref{cor:liminf}; this is also needed for the proof of Theorem \ref{thm:main}.

\begin{theorem}\label{thm:liminf}
Let $0<q<p<r\leq\infty$ and 
\begin{equation*}
  T:L^{q}(\mu)+L^{r}(\mu)\to L^{q,\infty}(\nu)+L^{r,\infty}(\nu)
\end{equation*}
be a quasi-sublinear operator, i.e., for some fixed $K\in[1,\infty)$,
\begin{equation}\label{eq:quasi-sub}
  \abs{T(f+g)}\leq K(\abs{Tf}+\abs{Tg}),
\end{equation}
such that $T$ has bounded restrictions
\begin{equation*}
  T:L^{q}(\mu)\to L^{q,\infty}(\nu),\quad
  T:L^{r}(\mu)\to L^{r,\infty}(\nu),
\end{equation*}
where $L^{\infty,\infty}:=L^\infty$.
If $f\in L^{p,\infty}(\mu)$ and $\omega\in\{0,\infty\}$, then
\begin{subequations}\label{eq:liminfsup}
\begin{align}
  \liminf_{\lambda\to\omega}
    \lambda\cdot\nu(\abs{Tf}>\lambda)^{\frac1p}
    &\lesssim \Big(\liminf_{\lambda\to\omega}
    \lambda\cdot\mu(\abs{f}>\lambda)^{\frac1p}\Big)^\theta \Norm{f}{L^{p,\infty}(\mu)}^{1-\theta},
    \label{eq:liminf} \\
    \limsup_{\lambda\to\omega}
    \lambda\cdot\nu(\abs{Tf}>\lambda)^{\frac1p}
    &\lesssim \Big(\limsup_{\lambda\to\omega}
    \lambda\cdot\mu(\abs{f}>\lambda)^{\frac1p}\Big)^{\Theta(\omega)} 
    \Norm{f}{L^{p,\infty}(\mu)}^{1-\Theta(\omega)},
    \label{eq:limsup}
\end{align}
\end{subequations}
where
\begin{subequations}\label{eq:thetas}
\begin{align}
  \theta &=\frac{(p-q)(r-p)}{p(r-q)}\qquad
  \Big(:=\frac{p-q}{p}\quad\text{if}\quad r=\infty\Big)
  \label{eq:theta} \\
  \Theta(\omega)
  &=\begin{cases}\displaystyle 
  \frac{(p-q)r}{p(r-q)}\quad \Big(:=\frac{p-q}{p}\quad\text{if}\quad r=\infty\Big), &
  \text{if}\quad  \omega=0, \\ \mbox{} \\
  \displaystyle\frac{r-p}{r-q}\qquad
  \Big(:=1\quad\text{if}\quad r=\infty\Big), & \text{if}\quad \omega=\infty.\end{cases}
  \label{eq:Theta}
\end{align}
\end{subequations}
If $T$ is a linear operator, then one can always take $\Theta(\omega)=1$ in \eqref{eq:limsup}.
\end{theorem}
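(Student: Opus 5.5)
The plan is to run the usual Marcinkiewicz splitting at a single well-chosen height, and to exploit the fact that smallness of the distribution function at one level propagates to a whole range of nearby levels. Write $A:=\Norm{f}{L^{p,\infty}(\mu)}$ and $\varphi(s):=s^p\mu(\abs{f}>s)$, so that $\varphi\le A^p$.

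\textbf{Step 1: the splitting estimate.} For $\lambda>0$ and a parameter $\gamma>0$, split $f=f_1+f_2$ with $f_1:=f1_{\{\abs{f}>\gamma\lambda\}}$. Then \eqref{eq:quasi-sub} and the two weak-type bounds give
\begin{equation*}
  \lambda^p\nu(\abs{Tf}>\lambda)\lesssim \lambda^{p-q}\Norm{f_1}{L^q}^q+\lambda^{p-r}\Norm{f_2}{L^r}^r .
\end{equation*}
(When $r=\infty$, take $\gamma$ small so that the second term vanishes.) The layer-cake formula rewrites the right-hand side as
\begin{equation*}
  \gamma^{q-p}\varphi(\gamma\lambda)+\lambda^{p-q}\int_{\gamma\lambda}^\infty s^{q-1-p}\varphi(s)\ud s+\lambda^{p-r}\int_0^{\gamma\lambda}s^{r-1-p}\varphi(s)\ud s .
\end{equation*}
In the third term we only use $\varphi\le A^p$, which gives the bound $\gamma^{r-p}A^p$.

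\textbf{Step 2: localising smallness (the key point for $\liminf$).} Let $a$ denote the $\liminf$ on the right of \eqref{eq:liminf}, and pick levels $s_0\to\omega$ with $\varphi(s_0)\le(2a)^p$. The distribution function is monotone, so $\varphi(s)\le (s/s_0)^p\varphi(s_0)$ for $s\ge s_0$. Hence $\varphi\le M^p(2a)^p$ on $[s_0,Ms_0]$ for any $M\geq1$.

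Now take $\lambda:=s_0/\gamma$ and split the middle integral at $Ms_0$. Beyond $Ms_0$ use $\varphi\le A^p$. This yields
\begin{equation*}
  \lambda^p\nu(\abs{Tf}>\lambda)\lesssim \gamma^{q-p}\big(M^p a^p+M^{q-p}A^p\big)+\gamma^{r-p}A^p .
\end{equation*}
Since $\lambda=s_0/\gamma$ runs along a sequence tending to $\omega$ (with $\gamma$, $M$ fixed), the left-hand side bounds the $\liminf$ on the left of \eqref{eq:liminf}.

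\textbf{Step 3: optimisation.} First choose $M$ to balance $M^pa^p=M^{q-p}A^p$. Then choose $\gamma$ to balance the resulting $\gamma^{q-p}$ term against $\gamma^{r-p}A^p$. This produces a bound $a^{\theta p}A^{(1-\theta)p}$, and I expect the exponent bookkeeping to reproduce \eqref{eq:theta}. If the literal optimisation gives a different exponent, I will refine the splitting, for instance by also exploiting smallness on $[s_0/M,s_0]$ via $\mu(\abs f>s)$ being comparable there. The cases $a=0$ and $A=\infty$ are handled by limiting arguments.

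This step, making the choice of $\lambda$ tied to a single good level $s_0$ and getting the sharp exponent, is the main obstacle. Only one point of smallness is available, unlike in the $\limsup$ case.

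\textbf{Step 4: the $\limsup$ estimates.} For \eqref{eq:limsup}, $\varphi\le(2a)^p$ holds on a whole neighbourhood of $\omega$. Every $\lambda$ near $\omega$ is then admissible, and only the integral extending away from $\omega$ needs the global bound $A^p$. For $\omega=0$ this is the $L^q$ tail, and for $\omega=\infty$ the $L^r$ part near zero. Optimising $\gamma$ alone then gives $\Theta(\omega)$ as in \eqref{eq:Theta}.

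For linear $T$, I would instead write $f=f1_{\{\abs f\in(\delta,\delta^{-1})\}}+g$, where the first piece lies in $L^q\cap L^r$. By the weak-type bounds it contributes nothing to the asymptotic quantity, because $\lambda\nu(\abs{Th}>\lambda)^{1/p}\to0$ as $\lambda\to\omega$ for such $h$. The remaining piece $g$ has $\Norm{g}{L^{p,\infty}}$ controlled by $\sup$ of $\varphi^{1/p}$ near $\omega$. This uses linearity to separate cleanly, and applying ordinary Marcinkiewicz to $g$ gives $\Theta=1$.
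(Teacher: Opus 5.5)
Your scheme is essentially the paper's. You split at $\gamma\lambda$, propagate smallness of the distribution function from one good level $s_0=\gamma\lambda$ up to a window $[s_0,Ms_0]$, use the global bound beyond it, and balance; your $\gamma$ and $M\gamma$ are the paper's $\alpha$ and $\beta$. However, the estimate you actually write in Step 2 does not produce $\theta$, and your fallback does not repair it. Replacing $\varphi$ by its maximum $M^p\varphi(s_0)$ over the window and then integrating $s^{q-1-p}$ wastes a factor $M^{p-q}$. Balancing $M^pa^p=M^{q-p}A^p$ and then $\gamma$ gives only
\[
  \theta'=\frac{(p-q)(r-p)}{(2p-q)(r-q)}<\theta .
\]
For example, when $q=1$ and $r=\infty$ you get $\theta'=\frac{p-1}{2p-1}$ rather than the sharp $\frac{p-1}{p}$ of Example~\ref{ex:Hardy}. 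Such a $\theta'$ would still serve the paper's application, but it is not the statement. Your fallback of exploiting smallness on $[s_0/M,s_0]$ is impossible: below $s_0$, monotonicity gives only $\mu(\abs f>s)\geq\mu(\abs f>s_0)$, and the distribution function may jump there. The missing step is to keep the level-set bound inside the layer-cake integral,
\[
  \int_{s_0}^{Ms_0}qs^{q-1}\mu(\abs f>s)\ud s\leq(Ms_0)^q\mu(\abs f>s_0).
\]
This changes your bound to $\gamma^{q-p}(M^qa^p+M^{q-p}A^p)+\gamma^{r-p}A^p$. Then $M=A/a$ gives $\gamma^{q-p}A^qa^{p-q}+\gamma^{r-p}A^p$, and $\gamma^{r-q}=(a/A)^{p-q}$ yields exactly $a^{\theta p}A^{(1-\theta)p}$. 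For $r=\infty$, where $\gamma$ is frozen, the choice $M=A/a$ alone gives $\theta=\frac{p-q}{p}$. This is precisely the term $(\beta\lambda)^q\mu(\abs f>\alpha\lambda)$ in \eqref{eq:liminf-high}.

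Two smaller points in Step 4 also fail as written. First, for $\omega=0$, putting the global bound on the whole $L^q$ tail and optimising $\gamma$ alone leaves $\gamma^{q-p}A^p+\gamma^{r-p}a^p$. That gives only the exponent $\frac{p-q}{r-q}<\Theta(0)$, and nothing at all when $r=\infty$. The tail must be split as well. In fact $\lambda^{p-q}\int_\delta^\infty s^{q-1-p}\varphi(s)\ud s\leq\frac{\delta^{q-p}}{p-q}A^p\lambda^{p-q}\to0$ as $\lambda\to0$, so only values of $\varphi$ near $0$ survive in the limit, and your layer-cake form even gives exponent $1$ here. Second, in the linear case the two-sided truncation is wrong. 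The remainder $g=f1_{\{\abs f\leq\delta\}}+f1_{\{\abs f\geq\delta^{-1}\}}$ keeps the end of $f$ away from $\omega$, so $\Norm{g}{L^{p,\infty}}$ is not controlled by $\varphi$ near $\omega$. Truncate on one side only: remove $f1_{\{\abs f>\kappa\}}\in L^q$ when $\omega=0$, and $f1_{\{\abs f\leq\kappa\}}\in L^r$ when $\omega=\infty$. This is the paper's argument with the subspaces $L^{p,\infty}_\omega$ and Lemma~\ref{lem:quotient-vs-limsup}.
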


\begin{remark}
Under the same assumptions, the classical Marcinkiewicz interpolation theorem (see \cite[Theorem 2.2.3]{HNVW1}) guarantees (among other things) that
\begin{equation*}
  \Norm{Tf}{L^{p,\infty}(\nu)}\lesssim\Norm{f}{L^{p,\infty}(\mu)}
  :=\sup_{\lambda>0}\lambda\cdot\mu(\abs{f}>\lambda)^{\frac1p}.
\end{equation*}
The point of \eqref{eq:liminfsup} is that, if we are willing to estimate a smaller asymptotic quantity (liminf or limsup instead of sup) on the left, then we can also replace the upper bound on the right by a related smaller quantity. However, this replacement is not linear: in general, an asymptotic quantity on the left cannot be controlled linearly by the similar asymptotic quantity on the right. While such a simpler bound is achieved for linear operators in the limsup case, we will show in Example \ref{ex:Hardy} that the liminf estimate, even for linear operators, cannot be better than that established in Theorem \ref{thm:liminf} in general. As we will see in the proof of Theorem \ref{thm:liminf}, the simpler form of the limsup estimate for linear operators can be attributed to the fact that $\limsup_{\lambda\to\omega}\lambda\cdot\mu(\abs{f}>\lambda)$ actually coincides with the quasi-norm of the quotient space of $L^{p,\infty}(\mu)$ with a certain subspace $L^{p,\infty}_\omega(\mu)$. Thus, \eqref{eq:limsup} is still quite close in the spirit with typical interpolation theorems in the sense of providing a quasi-norm estimate between some intermediate spaces. On the other hand, there seems to be no way of interpreting liminf in a similar way: since two nonnegative sequences with $\liminf a_n=0=\liminf b_n$ can easily have $\liminf (a_n+b_n)>0$, it follows that liminf is very far from what we expect from a (semi/quasi-)norm. This makes \eqref{eq:liminf} a more exotic addition to the old and established family of interpolation theorems, but it is precisely this exotic version that we need for our main application.

Theorem \ref{thm:liminf} shows that the limsup estimate \eqref{eq:limsup} holds with $\Theta(\omega)=1$ if {\em either} $T$ is linear {\em or} $r=\omega=\infty$. Example \ref{ex:Hardy} below shows that this conclusion cannot be extended to the liminf estimate \eqref{eq:liminf} even if {\em both} $T$ is linear {\em and} $r=\omega=\infty$.
Indeed, in the special case $q=1$ and $r=\infty$, which is perhaps the most frequently occurring situation for operators of harmonic analysis, Example \ref{ex:Hardy} shows that the exponent $\theta=1-\frac{1}{p}$ provided by Theorem \ref{thm:liminf} is the best possible for these parameters in general.

At the time of writing, we have not explored the possible sharpness of the exponents \eqref{eq:thetas} for other parameter values. For our main application, the precise exponent is actually irrelevant; any $\theta>0$ would suffice.
\end{remark}

\begin{proof}[Proof of Theorem \ref{thm:liminf}]
For $\lambda>0$, we denote
\begin{equation}\label{eq:f-lowhigh}
  f_\lambda:=f\cdot 1_{\{\abs{f}\leq\lambda\}},\quad f^\lambda:=f\cdot 1_{\{\abs{f}>\lambda\}}.
\end{equation}
For all $\alpha,\lambda>0$, we have $f=f^{\alpha\lambda}+f_{\alpha\lambda}$ and hence, by quasi-sublinearity \eqref{eq:quasi-sub},
\begin{equation*}
  \abs{Tf}\leq K(\abs{T(f^{\alpha\lambda})} + \abs{T(f_{\alpha\lambda})}).
\end{equation*}

Let first $r<\infty$. Then
\begin{equation}\label{eq:liminf-start}
\begin{split}
  \nu(\abs{Tf}>2K\lambda)
  &\leq \nu(\abs{T(f^{\alpha\lambda})}>\lambda) +\nu( \abs{T(f_{\alpha\lambda})}>\lambda ) \\
  &\lesssim\lambda^{-q}\Norm{f^{\alpha\lambda}}{L^{q}(\mu)}^q
    +\lambda^{-r}\Norm{f_{\alpha\lambda}}{L^{r}(\mu)}^r.
\end{split}
\end{equation}
Here
\begin{equation}\label{eq:liminf-low}
\begin{split}
  \Norm{f_{\alpha\lambda}}{L^r(\mu)}^r
  &=\int_0^\infty r t^{r-1}\mu(\abs{f}\cdot 1_{\{\abs{f}\leq\alpha\lambda\}}>t)\ud t \\
  &\leq\int_0^{\alpha\lambda} r t^{r-1}\mu(\abs{f}>t)\ud t \\
  &\leq\int_0^{\alpha\lambda} r t^{r-p-1} \ud t\times\sup_{s<\alpha\lambda}s^p\mu(\abs{f}>s) \\
  &\sim(\alpha\lambda)^{r-p} \sup_{s<\alpha\lambda}s^p\mu(\abs{f}>s).
\end{split}
\end{equation}
On the other hand, introducing another parameter $\beta>0$,
\begin{equation}\label{eq:liminf-high}
\begin{split}
  \Norm{f^{\alpha\lambda}}{L^q(\mu)}^q
  &=\int_0^\infty qt^{q-1}\mu(\abs{f}\cdot 1_{\{\abs{f}>\alpha\lambda\}}>\lambda)\ud t \\
  &\leq\int_0^{\beta\lambda} qr^{q-1}\mu(\abs{f}>\alpha\lambda)\ud t
     +\int_{\beta\lambda}^\infty qt^{q-1}\mu(\abs{f}>t)\ud t \\
   &\leq(\beta\lambda)^q\mu(\abs{f}>\alpha\lambda)+
   \int_{\beta\lambda}^\infty qt^{q-p-1}\ud t
   \times\sup_{s>\beta\lambda}s^p\mu(\abs{f}>s) \\
   &\lesssim (\beta\lambda)^q\mu(\abs{f}>\alpha\lambda)+
   (\beta\lambda)^{q-p}\sup_{s>\beta\lambda}s^p\mu(\abs{f}>s).
\end{split}
\end{equation}
Substituting \eqref{eq:liminf-low} and \eqref{eq:liminf-high} into \eqref{eq:liminf-start}, we obtain
\begin{equation}\label{eq:levelsets-sup}
\begin{split}
  (2K\lambda)^p &\nu(\abs{Tf}>2K\lambda) \\
  &\lesssim\lambda^{p-q}\Big((\beta\lambda)^q\mu(\abs{f}>\alpha\lambda)+
   (\beta\lambda)^{q-p}\sup_{s>\beta\lambda}s^p\mu(\abs{f}>s)\Big) \\
  &\qquad+\lambda^{p-r}(\alpha\lambda)^{r-p}\sup_{s<\alpha\lambda}s^p\mu(\abs{f}>s) \\
  &=\beta^q\alpha^{-p}(\alpha\lambda)^p\mu(\abs{f}>\alpha\lambda)
  +\beta^{q-p}\sup_{s>\beta\lambda}s^p\mu(\abs{f}>s)\ \\
  &\qquad+\alpha^{r-p}\sup_{s<\alpha\lambda}s^p\mu(\abs{f}>s).
\end{split}
\end{equation}
It is immediate from a change of variables that
\begin{equation}\label{eq:lim-term}
\begin{split}
  \liminf_{\lambda\to\omega}(\alpha\lambda)^p\mu(\abs{f}>\alpha\lambda)
   &=\liminf_{\lambda\to\omega}\lambda^p\mu(\abs{f}>\lambda)=:\underline{L}(\omega)^p, \\
   \limsup_{\lambda\to\omega}(\alpha\lambda)^p\mu(\abs{f}>\alpha\lambda)
   &=\limsup_{\lambda\to\omega}\lambda^p\mu(\abs{f}>\lambda)=:\overline{L}(\omega)^p.
\end{split}
\end{equation}
For the supremum terms in \eqref{eq:levelsets-sup}, we observe the uniform estimates and the limits
\begin{equation}\label{eq:sup-bounds}
\begin{split}
   \sup_{s<\alpha\lambda}s^p\mu(\abs{f}>s) &\leq N^p:=\Norm{f}{L^{p,\infty}(\mu)}^p,\quad
   \lim_{\lambda\to 0}\sup_{s<\alpha\lambda}s^p\mu(\abs{f}>s) =  \overline{L}(0)^p, \\
   \sup_{s>\beta\lambda}s^p\mu(\abs{f}>s) &\leq N^p,
   \phantom{:=\Norm{f}{L^{p,\infty}(\mu)}^p}
   \quad
   \lim_{\lambda\to\infty}\sup_{s>\beta\lambda}s^p\mu(\abs{f}>s) =  \overline{L}(\infty)^p.
\end{split}
\end{equation}

Taking $\liminf_{\lambda\to\omega}$ or $\limsup_{\lambda\to\omega}$ of both sides of \eqref{eq:levelsets-sup}, using the relevant versions of \eqref{eq:lim-term} and \eqref{eq:sup-bounds} available in each case, we obtain
\begin{equation} \label{eq:liminfsup-ab}
\begin{split}
  \liminf_{\lambda\to\omega}\lambda^p\nu(\abs{Tf}>\lambda)
  &\lesssim
  \beta^q\alpha^{-p}\underline{L}(\omega)^p+
   \beta^{q-p}N^p+\alpha^{r-p}N^p, \\
  \limsup_{\lambda\to\omega}\lambda^p\nu(\abs{Tf}>\lambda) 
  &\lesssim
  \beta^q\alpha^{-p}\overline{L}(\omega)^p+
  \begin{cases} \beta^{q-p}N^p+\alpha^{r-p}\overline{L}(\omega)^p, & \omega=0, \\
    \beta^{q-p}\overline{L}(\omega)^p+\alpha^{r-p}N^p, & \omega=\infty.
  \end{cases}
\end{split}
\end{equation}
In all cases of \eqref{eq:liminfsup-ab}, if $L\in\{\underline L(\omega),\overline L(\omega)\}$ satisfies $L=0$, then we can simply consider the limit $\alpha\to 0^+$ and $\beta\to\infty$ to conclude that the left-hand side of \eqref{eq:liminfsup-ab} vanishes, which is the desired estimate in this case.

So we proceed with the assumption that $0<L\leq N<\infty$. In each case, we then choose the parameters $\alpha$ and $\beta$ so that all three terms on the right of \eqref{eq:liminfsup-ab} are equal. For example, in the liminf case, we require that
\begin{equation*}
  \alpha^{-p}L^p=\beta^{-p}N^p,\qquad\beta^q L^p =\alpha^r N^p,
\end{equation*}
which has the solution $\alpha=(L/N)^{\frac{p-q}{r-q}}$ and $\beta=(N/L)^{\frac{r-p}{r-q}}$.
A substitution of these values into \eqref{eq:liminfsup-ab} gives the claimed bound \eqref{eq:liminf} with $\theta$ as in \eqref{eq:theta}. The proof of \eqref{eq:limsup} with $\Theta(\omega)$ as in \eqref{eq:Theta} is entirely similar.

This completes the proof of \eqref{eq:liminfsup} and \eqref{eq:thetas} for $r<\infty$.

\subsubsection*{Case $r=\infty$:}
If $r=\infty$, we estimate the contribution of the bounded part $f_{\alpha\lambda}$ as follows: Denoting by $C$ the norm of $T:L^\infty(\mu)\to L^\infty(\nu)$, it follows that
\begin{equation*}
  \Norm{T(f_{\alpha\lambda})}{L^\infty(\nu)}\leq C\Norm{f_{\alpha\lambda}}{L^\infty(\nu)}\leq C\alpha\lambda
  \leq\lambda
\end{equation*}
if we choose $\alpha=\max(C,1)^{-1}$. With this choice,
\begin{equation*}
  \nu(\abs{T(f_{\alpha\lambda})}>\lambda)=0.
\end{equation*}
Hence, in \eqref{eq:liminf-start}, the term involving $f_{\alpha\lambda}$ will vanish. We can then repeat the subsequent steps until \eqref{eq:liminfsup-ab} with the term coming from $f_{\alpha\lambda}$ removed. In place of \eqref{eq:liminfsup-ab}, we thus arrive at
\begin{equation} \label{eq:liminfsup-ab-infty}
\begin{split}
  \liminf_{\lambda\to\omega}\lambda^p\nu(\abs{Tf}>\lambda)
  &\lesssim
  \beta^q\underline{L}(\omega)^p+
   \beta^{q-p}N^p, \\
  \limsup_{\lambda\to\omega}\lambda^p\nu(\abs{Tf}>\lambda) 
  &\lesssim
  \beta^q\overline{L}(\omega)^p+
  \begin{cases} \beta^{q-p}N^p, & \omega=0, \\
    \beta^{q-p}\overline{L}(\omega)^p, & \omega=\infty;
  \end{cases}
\end{split}
\end{equation}
in addition to deleting the terms coming from $f_{\alpha\lambda}$, we also replaced $\alpha\sim 1$, since the parameter $\alpha$ was already fixed. In the case of $\limsup_{\lambda\to\omega}$, we note that the right-hand side of \eqref{eq:liminfsup-ab-infty} only has multiples of $\overline L(\omega)^p$, and we can also take $\beta=1$. In the remaining cases, the right-hand side of \eqref{eq:liminfsup-ab-infty} has the form $\beta^q L^p+\beta^{q-p}N^p$, where $L\in\{\underline L(\omega),\overline L(\omega)\}$. If $L=0$, we take $\beta\to\infty$, showing that the left-hand side of \eqref{eq:liminfsup-ab-infty} vanishes. If $0<L\leq N<\infty$, we make the two terms equal by choosing $\beta=N/L$. This immediately gives the claimed versions of \eqref{eq:liminfsup} and \eqref{eq:thetas} in this case, and completes the proof of Theorem \ref{thm:liminf} for sublinear operators.

\subsubsection*{Case of linear operators:}

Let us finally consider the case that $T$ is linear. For both $\sigma\in\{\mu,\nu\}$ and $\omega\in\{0,\infty\}$, we denote
\begin{equation}\label{eq:weakLp-omega}
  L^{p,\infty}_\omega(\sigma)
  :=\{f\in L^{p,\infty}(\sigma): \lim_{\lambda\to\omega}\lambda^p\sigma(\abs{f}>\lambda)=0\}.
\end{equation}
It is immediate that these are closed subspaces of $L^{p,\infty}(\sigma)$. By the part of the proof that we already completed, we know that $T:L^{p,\infty}_\omega(\mu)\to L^{p,\infty}_\omega(\nu)$. (Recall that this was a relatively immediate consequence of \eqref{eq:liminfsup-ab} or \eqref{eq:liminfsup-ab-infty}, and did not require the more delicate choice of the parameters $\alpha,\beta$.) This shows that the induced map between the quotient spaces,
\begin{equation*}
\begin{split}
  \tilde T:L^{p,\infty}(\mu)/L^{p,\infty}_\omega(\mu) &\to
  L^{p,\infty}(\nu)/L^{p,\infty}_\omega(\nu),\\
  [f]=f+L^{p,\infty}_\omega(\mu) &\mapsto
  [Tf]=Tf+L^{p,\infty}_\omega(\nu),
\end{split}
\end{equation*}
is well defined. It is standard that such induced maps satisfy $\Norm{\tilde T}{}\leq\Norm{T}{}$; this is well known for operators between Banach spaces, and the same proof can be repeated for quasi-Banach spaces (like $L^{p,\infty}(\mu)$) verbatim. Hence, it follows that
\begin{equation}\label{eq:quotient-bd}
  \Norm{[Tf]}{L^{p,\infty}(\nu)/L^{p,\infty}_\omega(\nu)}
  \lesssim\Norm{[f]}{L^{p,\infty}(\mu)/L^{p,\infty}_\omega(\mu)},
\end{equation}
where we have the usual quotient norms
\begin{equation}\label{eq:quotient-norm}
  \Norm{[f]}{L^{p,\infty}(\sigma)/L^{p,\infty}_\omega(\sigma)}
  :=\inf\Big\{\Norm{f-g}{L^{p,\infty}(\sigma)}: g\in L^{p,\infty}_\omega(\sigma)\Big\}.
\end{equation}
However, these norms also have an alternative formula; see Lemma \ref{lem:quotient-vs-limsup} below:
\begin{equation*}
  \Norm{[f]}{L^{p,\infty}(\sigma)/L^{p,\infty}_\omega(\sigma)}^p
  =\limsup_{\lambda\to\omega}\lambda^p \sigma(\abs{f}>\lambda).
\end{equation*}
Applying this formula with $\sigma\in\{\mu,\nu\}$ on both sides of \eqref{eq:quotient-bd}, we obtain \eqref{eq:limsup} with $\Theta(\omega)=1$. This completes the proof of the improved estimate for linear operators, and then also the proof of the whole theorem.
\end{proof}

In the previous proof, we used the following folklore identity, which we state and prove for completeness; essentially the same argument is found in \cite[Lemma 2.4]{FLSZ} for a slightly different version of the result.

\begin{lemma}\label{lem:quotient-vs-limsup}
For $f\in L^{p,\infty}(\mu)$, $\omega\in\{0,\infty\}$, and $L^{p,\infty}_\omega(\mu)$ as in \eqref{eq:weakLp-omega}, we have
\begin{equation}\label{eq:quotient-vs-limsup}
  \Norm{[f]}{L^{p,\infty}(\mu)/L^{p,\infty}_\omega(\mu)}^p
  =\limsup_{\lambda\to\omega}\lambda^p \mu(\abs{f}>\lambda).
\end{equation}
\end{lemma}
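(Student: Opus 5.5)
Write $\Lambda:=\limsup_{\lambda\to\omega}\lambda^p\mu(\abs{f}>\lambda)$ for the right-hand side of \eqref{eq:quotient-vs-limsup}. We prove the two inequalities ``$\geq$'' and ``$\leq$'' separately. In both directions the tool is the quasi-triangle inequality for distribution functions, $\mu(\abs{f}>\lambda)\leq\mu(\abs{f-g}>(1-\eps)\lambda)+\mu(\abs{g}>\eps\lambda)$. It is used at the level of $\limsup$ along $\lambda\to\omega$, with a small parameter $\eps$ sent to zero at the end. This avoids the multiplicative constant that would appear if we used the quasi-norm on $L^{p,\infty}$ directly.

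\emph{Lower bound (quotient norm $\geq\Lambda$).} Let $g\in L^{p,\infty}_\omega(\mu)$ and $\eps\in(0,1)$. Multiply the distributional inequality by $\lambda^p$. This gives
\begin{equation*}
\lambda^p\mu(\abs{f}>\lambda)\leq(1-\eps)^{-p}\Norm{f-g}{L^{p,\infty}(\mu)}^p+\eps^{-p}(\eps\lambda)^p\mu(\abs{g}>\eps\lambda).
\end{equation*}
The last term tends to $0$ as $\lambda\to\omega$, because $\eps\lambda\to\omega$ as well and $g\in L^{p,\infty}_\omega(\mu)$. Taking $\limsup_{\lambda\to\omega}$ gives $\Lambda\leq(1-\eps)^{-p}\Norm{f-g}{L^{p,\infty}(\mu)}^p$. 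Letting $\eps\to0$ and then taking the infimum over $g$ yields the claim.

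\emph{Upper bound (quotient norm $\leq\Lambda$).} Here we construct a good $g\in L^{p,\infty}_\omega(\mu)$ by truncation, in the spirit of \eqref{eq:f-lowhigh}. For $\omega=0$, take $g:=f\cdot1_{\{\abs{f}>\delta\}}$ for small $\delta>0$. Then $\mu(\abs{g}>\lambda)=\mu(\abs{f}>\delta)$ for $\lambda<\delta$, which is finite, so $\lambda^p\mu(\abs{g}>\lambda)\to0$ as $\lambda\to0$ and $g\in L^{p,\infty}_0(\mu)$. The remainder $f-g=f1_{\{\abs{f}\leq\delta\}}$ satisfies $\mu(\abs{f-g}>\lambda)\leq\mu(\abs{f}>\lambda)$ for $\lambda<\delta$ and vanishes for $\lambda\geq\delta$. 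Hence $\Norm{f-g}{L^{p,\infty}(\mu)}^p\leq\sup_{\lambda<\delta}\lambda^p\mu(\abs{f}>\lambda)$, which tends to $\Lambda$ as $\delta\to0$. For $\omega=\infty$ the roles are symmetric: take $g:=f1_{\{\abs{f}\leq M\}}$. This $g$ is bounded, so $\mu(\abs{g}>\lambda)=0$ for $\lambda\geq M$ and $g\in L^{p,\infty}_\infty(\mu)$. The remainder $f1_{\{\abs{f}>M\}}$ has distribution function $\mu(\abs{f}>\max(\lambda,M))$. Its weak norm is therefore $\sup_{\lambda\geq M}\lambda^p\mu(\abs{f}>\lambda)$, since for $\lambda<M$ the quantity $\lambda^p\mu(\abs{f}>M)$ is dominated by the value at $\lambda=M$. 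This supremum tends to $\Lambda$ as $M\to\infty$.

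The main point needing care is the one handled in the lower bound: the weak quasi-norm is not subadditive. Applying the quasi-triangle inequality directly would lose a constant factor, so the $\eps$-splitting inside the distribution function is essential to obtain equality rather than mere comparability. We also note that the quotient norm in \eqref{eq:quotient-norm} is built from the genuine weak $L^p$ quasi-norm, whose $p$-th power is exactly $\sup_\lambda\lambda^p\mu(\abs{\cdot}>\lambda)$. This matches the identity being proved without any further constants.
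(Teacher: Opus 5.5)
Your proof is correct and follows essentially the same route as the paper. For the upper bound you use the truncations $f1_{\{\abs{f}\leq\delta\}}$ and $f1_{\{\abs{f}>M\}}$, which are the paper's $f_\kappa$ and $f^\kappa$. For the lower bound you use an $\eps$-splitting of the distribution function before taking $\limsup_{\lambda\to\omega}$; the only cosmetic difference is that you split $\lambda=(1-\eps)\lambda+\eps\lambda$, whereas the paper bounds $\mu(\abs{f}>(1+\eps)\lambda)$ by $\mu(\abs{f-g}>\lambda)+\mu(\abs{g}>\eps\lambda)$.
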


\begin{proof}
Recalling the low and high parts of a function from \eqref{eq:f-lowhigh}, it is easy to check that $f^{\kappa}\in L^{p,\infty}_0(\mu)$ and $f_{\kappa}\in L^{p,\infty}_\infty(\mu)$ for all $f\in L^{p,\infty}(\mu)$ and $\kappa\in(0,\infty)$. Hence, from the definition \eqref{eq:quotient-norm} of the quotient norm, we have
\begin{equation}\label{eq:quotient-up}
\begin{split}
  \Norm{[f]}{L^{p,\infty}(\mu)/L^{p,\infty}_\omega(\mu)}^p
  \leq\begin{cases}
     \Norm{f-f^\kappa}{L^{p,\infty}(\mu)}^p 
     =\Norm{f_\kappa}{L^{p,\infty}(\mu)}^p, & \text{if}\quad\omega=0, \\
     \Norm{f-f_\kappa}{L^{p,\infty}(\mu)}^p 
     =\Norm{f^\kappa}{L^{p,\infty}(\mu)}^p, & \text{if}\quad\omega=\infty, \end{cases}
\end{split}
\end{equation}
where
\begin{equation*}
\begin{split}
  \Norm{f_{\kappa}}{L^{p,\infty}(\mu)}^p
  &=\sup_{\lambda>0}\lambda^p\mu(\abs{f}\cdot 1_{\{\abs{f}\leq\kappa\}}>\lambda) 
  \leq\sup_{\lambda<\kappa}\lambda^p\mu(\abs{f}>\lambda), \\
  \Norm{f^{\kappa}}{L^{p,\infty}(\mu)}^p
  &=\sup_{\lambda>0}\lambda^p\mu(\abs{f}\cdot 1_{\{\abs{f}>\kappa\}}>\lambda) \\
  &=\sup_{\lambda>0}\lambda^p\mu(\abs{f}>\max(\kappa,\lambda))
  =\sup_{\lambda\geq\kappa}\lambda^p\mu(\abs{f}>\lambda).
\end{split}
\end{equation*}
Thus, taking $\lim_{\kappa\to\omega}$ on the right of \eqref{eq:quotient-up}, it follows that
\begin{equation}\label{eq:quotient-up2}
  \Norm{[f]}{L^{p,\infty}(\mu)/L^{p,\infty}_\omega(\mu)}^p
  \leq\limsup_{\lambda\to\omega}\lambda^p\mu(\abs{f}>\lambda).
\end{equation}

For the other direction, let $f\in L^{p,\infty}(\mu)$ and $g\in L^{p,\infty}_\omega(\mu)$. From
\begin{equation*}
\begin{split}
  \lambda^p\mu(\abs{f}>(1+\eps)\lambda)
  &\leq \lambda^p\mu(\abs{f-g}>\lambda)
    +\lambda^p\mu(\abs{g}>\eps\lambda) \\
 &\leq \Norm{f-g}{L^{p,\infty}(\mu)}^p+\lambda^p\mu(\abs{g}>\eps\lambda),
\end{split}
\end{equation*}
taking $\limsup_{\lambda\to\omega}$ of both sides, we conclude that
\begin{equation*}
\begin{split}
  (1+\eps)^{-p}\limsup_{\lambda\to\omega}\lambda^p\mu(\abs{f}>\lambda)
  &\leq\Norm{f-g}{L^{p,\infty}(\mu)}^p+\eps^{-p}\limsup_{\lambda\to\omega}
  \lambda^p\mu(\abs{g}>\lambda) \\
  &=\Norm{f-g}{L^{p,\infty}(\mu)}^p,\quad\text{since}\quad g\in L^{p,\infty}_\omega(\mu).
\end{split}
\end{equation*}
Taking the limit $\eps\to 0^+$ and then the infimum over all $g\in L^{p,\infty}_\omega(\mu)$, we obtain the converse of \eqref{eq:quotient-up2}. Together with \eqref{eq:quotient-up2}, this completes the proof of the lemma.
\end{proof}

\begin{example}\label{ex:Hardy}
We consider the space $(0,\infty)$ with measures $\mu=\nu=\ud x$ (the Lebesgue measure) and the Hardy operator $Tf(x):=x^{-1}\int_0^x f(y)\ud y$. This satisfies the assumptions of Theorem \ref{thm:liminf} with $q=1$ and $r=\infty$. For this operator, for all $p\in(1,\infty)$ and both $\omega\in\{0,\infty\}$, the estimate \eqref{eq:liminf} provided by Theorem \ref{thm:liminf} is the best possible among all estimates of this form, i.e., \eqref{eq:liminf} holds with $\theta=\theta_p:=1-\frac{1}{p}$ and it fails for all $\theta>\theta_p$.
\end{example}

\begin{proof}[Proof of Example \ref{ex:Hardy}]
From the pointwise bounds $\abs{Tf(x)}\leq\Norm{f}{\infty}$ and $\abs{Tf(x)}\leq x^{-1}\Norm{f}{1}$, and noting that $\Norm{x^{-1}}{L^{1,\infty}(0,\infty)}=1$, it is immediate that $T$ is bounded on $L^\infty(0,\infty)$, and from $L^1(0,\infty)$ to $L^{1,\infty}(0,\infty)$. In particular, the assumptions of Theorem \ref{thm:liminf} are satisfied with $q=1$ and $r=\infty$. 

We will consider a fixed $p\in(1,\infty)$ and ignore the dependence of multiplicative constants on $p$, as in Theorem \ref{thm:liminf}. On the other hand, we will pay attention to dependence on the small parameter $\delta\in(0,2^{-\frac{1}{p-1}})$. We consider the following functions $f_{p,\delta}$:
\begin{equation*}
  f_{p,\delta}(x):=
     \delta^{k},\quad\text{for}\quad x\in(\delta^{(1-k)p},\delta^{-k p}],\quad k\in\Z.
\end{equation*}
Then
\begin{equation*}
  \mu(\abs{f_{p,\delta}}>\lambda)
  =\abs{(0,\delta^{-kp}]}=\delta^{-kp}
  \quad\text{for}\quad\lambda\in[\delta^{(k+1)},\delta^{k}),\quad k\in\Z.
\end{equation*}
It follows that
\begin{equation}\label{eq:fpd}
\begin{split}
  \sup_{\lambda>0}\lambda\mu(\abs{f_{p,\delta}}>\lambda)^{\frac1p}
  &= \sup_{k\in\Z} \sup_{\lambda\in[\delta^{k+1},\delta^{k})}
  \lambda\cdot \delta^{-k} = 1, \\
  \liminf_{\lambda\to\omega}\lambda\mu(\abs{f_{p,\delta}}>\lambda)^{\frac1p}
  &=\liminf_{k\to\omega}\inf_{\lambda\in[\delta^{k+1},\delta^{k})}
     \lambda\cdot \delta^{-k}
  =\delta
\end{split}
\end{equation}
for both $\omega\in\{0,\infty\}$.
In particular, all functions $f_{p,\delta}$ have equal $L^{p,\infty}(\mu)$ norm $1$, while the related lower limits decrease to zero as $\delta\to 0$.

Let $g_{p,\delta}:=Tf_{p,\delta}$. For $x\in(\delta^{(1-k)p},\delta^{-kp}]$, we have
\begin{equation*}
  g_{p,\delta}(x)
  =\frac{1}{x}\Big(\sum_{j=-\infty}^{k-1}\delta^{j}\delta^{-jp}(1-\delta^p)
    +\delta^{k}(x-\delta^{(1-k)p})\Big).
\end{equation*}
For the sum, we have
\begin{equation*}
  \sum_{j=-\infty}^{k-1}\delta^{j}\delta^{-jp}(1-\delta^p)
  =\frac{\delta^{(k-1)(1-p)}}{1-\delta^{p-1}}(1-\delta^p)
  \sim\delta^{(k-1)(1-p)},
\end{equation*}
ignoring factors that are essentially $1$, since $\delta^p\leq\delta^{p-1}\leq\frac12$. Substituting back and replacing a sum by the maximum, it follows that
\begin{equation*}
\begin{split}
  g_{p,\delta}(x) 
  &\sim\frac{1}{x}\Big(\delta^{(k-1)(1-p)} +\delta^{k}(x-\delta^{(1-k)p})\Big) \\
  &\sim\begin{cases}
    x^{-1}\delta^{(k-1)(1-p)}, & x\in(\delta^{(1-k)p},\delta^{(1-k)p-1}], \\
    \delta^{k}, & x\in(\delta^{(1-k)p-1},\delta^{-kp}].     \end{cases}
\end{split}
\end{equation*}
Denoting by $h_{p,\delta}$ the right-hand of the previous display, we have
\begin{equation*}
  \mu(\abs{h_{p,\delta}}>\lambda)
    =\lambda^{-1}\delta^{(k-1)(1-p)},
    \quad\text{if}\quad \lambda\in[\delta^k,\delta^{k-1}),\quad k\in\Z.
\end{equation*}
Since $g_{p,\delta}\sim h_{p,\delta}$, we can estimate
\begin{equation*}
\begin{split}
  \liminf_{\lambda\to\omega}\lambda\cdot\mu(\abs{g_{p,\beta}}>\lambda)^{\frac1p}
  &\sim\liminf_{\lambda\to\omega}\lambda\cdot\mu(\abs{h_{p,\beta}}>\lambda)^{\frac1p} \\
  &=\liminf_{k\to 1/\omega}\inf_{\lambda\in[\delta^{k},\delta^{k-1})}
  \lambda\cdot\lambda^{-\frac1p}\delta^{(k-1)(\frac1p-1)} \\
  &=\liminf_{k\to1/\omega}
     \delta^{k(1-\frac1p)}\delta^{(k-1)(\frac1p-1)}
     =\delta^{1-\frac1p},
\end{split}
\end{equation*}
where $1/0:=\infty$ and $1/\infty:=0$.
A comparison with \eqref{eq:fpd} as $\delta\to 0$ shows that, if \eqref{eq:liminf} holds for the Hardy operator $T$ on $(0,\infty)$ with $\mu=\nu=\ud x$, for some $\theta\in[0,1]$ and all $f\in L^{p,\infty}(0,\infty)$, then necessarily $\theta\leq\theta_p:= 1-\frac{1}{p}$. On the other hand, Theorem \ref{thm:liminf} with $q=1$ and $r=\infty$ shows that we have this estimate with $\theta=\theta_p$. Thus, for the Hardy operator $T$ on $(0,\infty)$ with $\mu=\nu=\ud x$, the estimate \eqref{eq:liminf} provided by Theorem \ref{thm:liminf} is the best possible, for both $\omega\in\{0,\infty\}$.
\end{proof}

We will also need the following variant of Theorem \ref{thm:liminf}, where the domain $L^p$ spaces of the operator are replaced by Schatten classes.

\begin{corollary}\label{cor:liminf}
Let $0<q<p<r\leq\infty$ and 
\begin{equation*}
  T:S^{q}(\mathcal H)+S^{r}(\mathcal H)\to L^{q,\infty}(\nu)+L^{r,\infty}(\nu)
\end{equation*}
be a quasi-sublinear operator (in the sense of \eqref{eq:quasi-sub})
with bounded restrictions
\begin{equation*}
  T:S^{q}(\mathcal H)\to L^{q,\infty}(\nu),\quad
  T:S^{r}(\mathcal H)\to L^{r,\infty}(\nu),
\end{equation*}
where $L^{\infty,\infty}:=L^\infty$ and $S^\infty(\mathcal H):=\mathcal K(\mathcal H)$ is the space of compact operators on the Hilbert space $\mathcal H$.
If $A\in S^{p,\infty}(\mathcal H)$, then
\begin{subequations}\label{eq:liminfsup-Sp}
\begin{align}
  \liminf_{\lambda\to 0}
    \lambda\cdot\nu(\abs{T(A)}>\lambda)^{\frac1p}
    &\lesssim \Big(\liminf_{n\to\infty} a_n(A)\cdot n^{\frac1p}\Big)^\theta \Norm{A}{S^{p,\infty}(\mathcal H)}^{1-\theta},
    \label{eq:liminf-Sp} \\
    \limsup_{\lambda\to 0}
    \lambda\cdot\nu(\abs{T(A)}>\lambda)^{\frac1p}
    &\lesssim \Big(\limsup_{n\to\infty}a_n(A)\cdot n^{\frac1p}\Big)^{\Theta} 
    \Norm{A}{S^{p,\infty}(\mathcal H)}^{1-\Theta},
    \label{eq:limsup-Sp} \\
    \lim_{\lambda\to\infty}
    \lambda\cdot\nu(\abs{T(A)}>\lambda)^{\frac1p}
    &=0.\label{eq:lim-triv}
\end{align}
\end{subequations}
where $\theta$ and $\Theta:=\Theta(0)$ are as in Theorem \ref{thm:liminf}.
\end{corollary}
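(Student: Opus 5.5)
The plan is to reduce Corollary \ref{cor:liminf} to Theorem \ref{thm:liminf} by transferring from Schatten classes to sequence spaces on $\N$ with counting measure $\mu=\#$. Each $A$ in the domain has a singular value decomposition $A=\sum_n a_n(A)\,e_n\otimes h_n$ with orthonormal systems $(e_n)$ and $(h_n)$. Conversely, for a fixed pair of orthonormal systems, each sequence $x=(x_n)$ gives an operator $\Phi x:=\sum_n x_n e_n\otimes h_n$, and then $\Norm{\Phi x}{S^s}=\Norm{x}{\ell^s}$ for all $s\in(0,\infty]$ (with $\ell^\infty$ meaning $c_0$ when $S^\infty=\mathcal K$). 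We fix the systems attached to the given $A$ and define the quasi-sublinear operator $\tilde T:=T\circ\Phi:\ell^q+\ell^r\to L^{q,\infty}(\nu)+L^{r,\infty}(\nu)$.

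Two properties must then be checked. First, the quasi-sublinearity constant of $\tilde T$ equals that of $T$, since $\Phi$ is linear. Second, $\tilde T$ has bounded restrictions $\ell^q\to L^{q,\infty}(\nu)$ and $\ell^r\to L^{r,\infty}(\nu)$ with the same norms as $T$. When $r=\infty$, the map on $\ell^\infty$ is only needed on bounded sequences that arise from truncations of $x=(a_n(A))$. These truncations tend to zero, so they lie in $c_0$ and $\Phi$ of them is compact. I would look at the proof of Theorem \ref{thm:liminf} to confirm that only such truncations $f_{\alpha\lambda}$ and $f^{\alpha\lambda}$ are fed into the operator; they are. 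Alternatively, one can define $\tilde T$ on $\ell^\infty$ only through $c_0$.

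Next I apply Theorem \ref{thm:liminf} to $f:=(a_n(A))_n\in\ell^{p,\infty}$ with $\omega=0$, which gives $Tf=T(A)$. The remaining step is to rewrite the quantities on the right. For a nonincreasing sequence, $\#\{n:a_n>\lambda\}=N(\lambda)$, the largest $n$ with $a_n>\lambda$. Then $\sup_\lambda \lambda N(\lambda)^{1/p}=\sup_n a_n n^{1/p}$, because on $\lambda\in[a_{n+1},a_n)$ the counting function is $n$ and the supremum over this interval is approached at $a_n$. The liminf as $\lambda\to0$ requires more care. On the interval $[a_{n+1},a_n)$ we have $\inf_\lambda \lambda N(\lambda)^{1/p}=a_{n+1}n^{1/p}$, which is comparable to $a_{n+1}(n+1)^{1/p}$. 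Hence $\liminf_{\lambda\to0}\lambda\,\#(f>\lambda)^{1/p}\le \liminf_n a_n n^{1/p}$ up to a factor tending to $1$. Constant stretches of the sequence cause no trouble, since such intervals are simply empty. The limsup case is handled the same way, giving \eqref{eq:limsup-Sp} with $\Theta=\Theta(0)$. If $A$ has finite rank, then $a_n=0$ eventually, and the liminf becomes $0$ on both sides after passing to the terminal interval; this is consistent with the formulas.

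For \eqref{eq:lim-triv} I use $\omega=\infty$ in the limsup form \eqref{eq:limsup}. For counting measure, $\#\{n:a_n>\lambda\}=0$ once $\lambda\ge a_1$. Therefore $\overline L(\infty)=0$, and the proof of the theorem (the $L=0$ case) gives that the left-hand limsup, and hence the limit, vanishes. I expect the main obstacle to be the bookkeeping for $r=\infty$, namely making sure that $\Phi$ lands in $\mathcal K(\mathcal H)$ with norm control. The distribution-versus-singular-value comparison in the liminf is the second place where care is needed.
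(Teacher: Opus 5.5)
Your proposal is correct and follows the paper's proof essentially step for step. Like the paper, you compose $T$ with the diagonal embedding $\sigma\mapsto\sum_n\sigma_n e_n\otimes f_n$ built from the singular value decomposition of $A$, apply Theorem \ref{thm:liminf} to $\sigma=(a_n(A))_n$ with counting measure on $\Z_{>0}$, rewrite the distribution-function quantities as $\liminf_n a_n(A)n^{1/p}$ and $\limsup_n a_n(A)n^{1/p}$, and obtain \eqref{eq:lim-triv} from $\overline L(\infty)=0$. The paper does the rewriting via jump indices $n_j$, which also makes explicit a point you leave implicit: within a constant stretch, $a_m m^{1/p}$ is minimised at the first index. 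Your remark that for $r=\infty$ the composed operator is only defined on $c_0$, and that the proof only ever feeds it $f$, $f_{\alpha\lambda}$ and $f^{\alpha\lambda}$, all of which lie in $c_0$, is a detail the paper passes over with the word ``verbatim''.
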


Only \eqref{eq:liminf-Sp} will feature in our main application, but \eqref{eq:limsup-Sp} and \eqref{eq:lim-triv} are obtained as easy by-products and recorded for completeness.

\begin{proof}
For each fixed pair of orthonormal systems $(e_n)_{n=1}^\infty$ and $(f_n)_{n=1}^\infty$ in $\mathcal H$, we consider the operator
\begin{equation*}
  \mathcal E: c_0\to\mathcal K(\mathcal H),\quad
  \sigma=(\sigma_n)_{n=1}^\infty\mapsto \sum_{n=1}^\infty\sigma_n e_n\otimes f_n.
\end{equation*}
Then $\Norm{\mathcal E\sigma}{S^{u,v}(\mathcal H)}=\Norm{\sigma}{\ell^{u,v}}$ for all $u,v$. Hence, if $T:S^q(\mathcal H)+S^r(\mathcal H)\to L^{q,\infty}(\nu)+L^{r,\infty}(\nu)$ satisfies the assumptions of Corollary \ref{cor:liminf}, then $T\circ\mathcal E:\ell^q+\ell^r\to L^{q,\infty}(\nu)+L^{r,\infty}(\nu)$ satisfies the assumptions of Theorem \ref{thm:liminf} verbatim, noting that $\ell^{u,v}=L^{u,v}(\mu)$, when $\mu$ is the counting measure of $\Z_{>0}$. Since $\mathcal E$ is linear, we see that $T\circ\mathcal E$ is quasi-sublinear if $T$ is, and linear if $T$ is. An application of Theorem \ref{thm:liminf} then shows that $T\circ\mathcal E$ in place of $T$ satisfies all the conclusions of Theorem \ref{thm:liminf}, i.e., 
\begin{subequations}\label{eq:com-to-nc}
\begin{align}
  \liminf_{\lambda\to\omega}\lambda\cdot\nu(\abs{T(\mathcal E\sigma)}>\lambda)^{\frac1p}
  &\lesssim
  \Big(\liminf_{\lambda\to\omega}\lambda\cdot\mu(\abs{\sigma}>\lambda)^{\frac1p}\Big)^\theta
  \Norm{\sigma}{L^{p,\infty}(\mu)}^{1-\theta}, \\
  \limsup_{\lambda\to\omega}\lambda\cdot\nu(\abs{T(\mathcal E\sigma)}>\lambda)^{\frac1p}
  &\lesssim
  \Big(\limsup_{\lambda\to\omega}\lambda\cdot\mu(\abs{\sigma}>\lambda)^{\frac1p}\Big)^\Theta
  \Norm{\sigma}{L^{p,\infty}(\mu)}^{1-\Theta(\omega)}.
\end{align}
\end{subequations}
Given $A\in S^{p,\infty}(\mathcal H)$, we write down its singular value decomposition
\begin{equation*}
  A=\sum_{n=1}^\infty a_n(A)\cdot e_n(A)\otimes f_n(A),
\end{equation*}
where $(a_n(A))_{n=1}^\infty$ are the singular values and $(e_n(A))_{n=1}^\infty$ and $(f_n(A))_{n=1}^\infty$ are some orthonormal systems.

Noting that \eqref{eq:com-to-nc} holds uniformly over all choices of $\mathcal E$, we apply it with the specific choice $e_n=e_n(A)$ and $f_n=f_n(A)$. We also take $\sigma=(a_n(A))_{n=1}^\infty$. With these choices, it follows that
\begin{equation}\label{eq:Esigma}
  \mathcal E\sigma=A,\qquad\Norm{\sigma}{L^{p,\infty}(\mu)}=\Norm{\sigma}{\ell^{p,\infty}}
  =\Norm{A}{S^{p,\infty}(H)},
\end{equation}
and
\begin{equation*}
  \mu(\abs{\sigma}>\lambda)
  =\#\{n:a_n(A)>\lambda\}.
\end{equation*}
We note that the sequence $\sigma=(a_n(A))_{n=1}^\infty\in\ell^{p,\infty}$ is in particular bounded. Hence $\mu(\abs{\sigma}>\lambda)=0$ for all $\lambda\geq\Norm{\sigma}{\infty}=a_1(A)$, which shows the vanishing of the right, and hence the left, sides of \eqref{eq:com-to-nc} for $\omega=\infty$. This proves \eqref{eq:lim-triv}.

We will then concentrate on the lower and upper limits as $\lambda\to 0$.
Let $(n_j)_{j=1}^\infty$ be the indices such that $n_1=1$ and, for each $j\geq 2$,
\begin{equation*}
  a_{n_{j-1}}(A)=\ldots=a_{n_{j}-1}(A)>a_{n_{j}}(A).
\end{equation*}
Then
\begin{equation}\label{eq:liminf-lan}
\begin{split}
  \liminf_{\lambda\to 0} \lambda &\cdot \mu(\abs{\sigma}>\lambda)^{\frac1p} \\
  &=\liminf_{j\to\infty}\inf_{\lambda\in[a_{n_{j+1}(A)},a_{n_j}(A))}
    \lambda
    (\#\{n:a_n(A)>\lambda\})^{\frac1p} \\
  &=\liminf_{j\to\infty}\inf_{\lambda\in[a_{n_{j}(A)},a_{n_{j-1}}(A))}
    \lambda\cdot (n_j-1)^{\frac1p} \\
  &=\liminf_{j\to\infty}a_{n_{j}}(A)\cdot (n_j-1)^{\frac1p} 
  =\liminf_{j\to\infty}a_{n_{j}}(A)\cdot n_j^{\frac1p} \\  
  &=\liminf_{j\to\infty}\inf_{n\in[n_j,n_{j+1})} a_{n_{j}}(A)\cdot n_j^{\frac1p} 
  =\liminf_{n\to\infty} a_n(A)\cdot n^{\frac1p}.
\end{split}
\end{equation}
Substituting \eqref{eq:Esigma} and \eqref{eq:liminf-lan} into \eqref{eq:com-to-nc}, we obtain \eqref{eq:liminf-Sp}.
Similarly, we have
\begin{equation}\label{eq:limsup-lan}
\begin{split}
  \limsup_{\lambda\to 0} \lambda &\cdot \mu(\abs{\sigma}>\lambda)^{\frac1p} \\
  &=\limsup_{j\to\infty}\sup_{\lambda\in[a_{n_{j+1}(A)},a_{n_j}(A))}
    \lambda
    (\#\{n:a_n(A)>\lambda\})^{\frac1p} \\
  &=\limsup_{j\to\infty}\sup_{\lambda\in[a_{n_{j}(A)},a_{n_{j-1}}(A))}
    \lambda\cdot (n_j-1)^{\frac1p} \\
  &=\limsup_{j\to\infty}a_{n_{j-1}}(A)\cdot (n_j-1)^{\frac1p} \\
  &=\limsup_{j\to\infty}\sup_{n\in[n_{j-1},n_j)} a_{n}(A)\cdot n^{\frac1p} 
  =\limsup_{n\to\infty} a_n(A)\cdot n^{\frac1p}.
\end{split}
\end{equation}
Substituting \eqref{eq:Esigma} and \eqref{eq:limsup-lan} into \eqref{eq:com-to-nc}, we obtain \eqref{eq:limsup-Sp}. This completes the proof.
\end{proof}

\section{Hidden operators in commutator lower bounds}\label{sec:implicit}

In order to apply Theorem \ref{thm:liminf} to commutator lower bounds, we need to reformulate some estimates obtained in \cite{Hyt:Osc,Hyt:Sp,HW:frac} as the boundedness of appropriate operators. The section at hand is dedicated to this task.

\begin{proposition}\label{prop:osc<Phi}
Let $b\in L^1_{\loc}(\mu)$ and $T$ be an operator with strongly non-degenerate singular kernel. Then there is a linear operator $\Phi:\bddlin(L^2(\mu))\to\ell^\infty(\mathscr D)$ such that
\begin{equation}\label{eq:osc<Phi}
   \operatorname{osc}_\mu(b,B_Q)
   :=\inf_{c\in\C}\fint_B\abs{b-c}\ud\mu
   \lesssim\Phi([b,T])(Q)
\end{equation}
and $\Phi:S^{p,q}(L^2(\mu))\to \ell^{p,q}(\mathscr D)$ is bounded for all $p\in(1,\infty)$ and $q\in[1,\infty]$. The operator $\Phi$ is allowed to depend on both $b$ and $T$, but the implicit constants in \eqref{eq:osc<Phi} and the boundedness of $\Phi$ are independent of $b$.
\end{proposition}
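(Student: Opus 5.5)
We will make the implicit operator in \cite{Hyt:Osc,Hyt:Sp} explicit. Fix a system of dyadic cubes $\mathscr D$ in $(X,\rho)$, with each $Q\in\mathscr D$ associated to a ball $B_Q$ of radius comparable to $\ell(Q)$. For each $Q$, strong non-degeneracy applied with $x_0$ the centre of $B_Q$ and $r\sim\ell(Q)$ gives a partner ball $\tilde B_Q=B(y_0,\alpha r)$ and a unimodular $v_Q$. On $B_Q\times\tilde B_Q$ (or on the transposed product; we treat the first case, the other being symmetric by passing to adjoints) the kernel satisfies $\abs{K}\gtrsim 1/\mu(B_Q)$ and has argument within $\pi/9$ of that of $v_Q$. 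The standard argument of \cite{Hyt:Osc,HW:frac} then proceeds in two steps. First, the oscillation is controlled by the oscillation over the two balls: choose a median-type constant $c_Q$ of $b$ on $\tilde B_Q$. Second, split $B_Q$ into the pieces where $\operatorname{Re}$ or $\operatorname{Im}$ of $b-c_Q$ is positive or negative, and likewise $\tilde B_Q$ into $\{\pm\operatorname{Re}(b-c_Q)\le0\}$-type sets of at least half measure. This gives, for suitable measurable sets $E_Q\subseteq B_Q$ and $F_Q\subseteq\tilde B_Q$ (at most four choices each; take the worst) and unimodular constants, the bound
\begin{equation*}
  \operatorname{osc}_\mu(b,B_Q)\lesssim\frac{1}{\mu(B_Q)}\Big|\int_{E_Q}\int_{F_Q}(b(x)-b(y))K(x,y)\,\overline{\sigma_Q(x)}\,\tau_Q(y)\ud\mu(y)\ud\mu(x)\Big|,
\end{equation*}
where $\sigma_Q,\tau_Q$ are bounded functions of modulus at most one. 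The sign/argument coherence ensures that there is no cancellation, since $(b(x)-b(y))$ has a controlled sign on $E_Q\times F_Q$ and $K$ has a controlled argument.

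The right-hand side equals $\mu(B_Q)^{-1}\abs{\pair{g_Q}{[b,T]h_Q}}$, up to a sign convention, with $h_Q:=1_{F_Q}\tau_Q$ and $g_Q:=1_{E_Q}\sigma_Q$. We therefore define
\begin{equation*}
  \Phi(A)(Q):=\pair{\mu(B_Q)^{-1/2}g_Q}{A\,\mu(B_Q)^{-1/2}h_Q},
\end{equation*}
which is linear in $A\in\bddlin(L^2(\mu))$ and bounded into $\ell^\infty(\mathscr D)$, since the normalised functions have $L^2$ norm $\lesssim1$. Note that $\Phi$ depends on $b$ through $E_Q,F_Q,\sigma_Q,\tau_Q$, but all constants are uniform in $b$, which gives \eqref{eq:osc<Phi}. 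A small technical point: the double integral defining the commutator pairing is legitimate because $E_Q$ and $F_Q$ are separated, so the kernel representation applies. For unbounded $b$ one first truncates, or assumes that $[b,T]$ is bounded, as in \cite{Hyt:Osc}.

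It remains to show that $\Phi:S^{p,q}\to\ell^{p,q}$ for $1<p<\infty$. We will use real interpolation between $S^1\to\ell^1$ and $S^\infty=\mathcal K$ (or $\bddlin$) $\to\ell^\infty$; the latter was already shown. Linearity of $\Phi$ allows real interpolation of $(S^1,\bddlin)_{1/p',q}=S^{p,q}$. This is where the main obstacle lies: $\Phi$ is not bounded $S^1\to\ell^1$ for arbitrary families, because the functions $\phi_Q:=\mu(B_Q)^{-1/2}g_Q$ overlap across scales. The standard remedy is to sparsify. We split $\mathscr D$ into boundedly many subcollections (by generation modulo a large $M$, and by dyadic-lattice position) such that, within each subcollection, the normalised families $\{\phi_Q\}$ and $\{\psi_Q\}$ are ``nearly orthonormal'' in the Rochberg--Semmes sense, i.e.\ they satisfy $\sum_Q\abs{\pair{\phi_Q}{f}}^2\lesssim\Norm{f}{2}^2$. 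For rank-one $A=e\otimes h$ one has $\sum_Q\abs{\pair{\phi_Q}{e}}\abs{\pair{h}{\psi_Q}}\le$ (Bessel)$\cdot$(Bessel)$\lesssim\Norm{e}{}\Norm{h}{}$ by Cauchy--Schwarz, which yields $S^1\to\ell^1$ on each subcollection. However, plain indicators of nested balls are not Bessel. To fix this, we replace $\phi_Q$ by the ``martingale-difference-like'' pieces, following the dyadic argument in \cite{Hyt:Sp}: subtract averages at the next coarser scale, which costs at most a tail that is again controlled. Alternatively, we use the sharper $S^2\to\ell^2$ bound, $\sum_Q\abs{\pair{\phi_Q}{A\psi_Q}}^2\le\Norm{A}{S^2}^2$, valid once the $\phi_Q$ are Bessel with disjointly supported $\psi_Q$ per scale, and interpolate $S^2$–$\bddlin$ for $p>2$ and duality for $p<2$. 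I expect this Bessel/near-orthogonality step to be the delicate point. I will first try the route from \cite{Hyt:Sp}, where exactly these dyadic pairings of $[b,T]$ were summed against $S^p$ norms, and then simply verify that its estimate is linear in $A$.
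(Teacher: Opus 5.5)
\textbf{There is a genuine gap in the boundedness of $\Phi$.} Your construction of $\Phi$ matches the paper's: the sets $E_Q,F_Q\subset CB_Q$ come from strong non-degeneracy plus a median splitting (the paper quotes \cite[Corollary 6.4]{HW:frac}), and $\Phi(A)(Q)$ pairs $A$ against $\mu(B_Q)^{-1/2}$-normalised indicators with unimodular factors. Write $e_Q,h_Q$ for these test functions. The gap is the boundedness $\Phi:S^{p,q}(L^2(\mu))\to\ell^{p,q}(\mathscr D)$, which is the real content of the proposition.

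\textbf{Route (a) fails.} It interpolates from an endpoint $S^1\to\ell^1$, and that endpoint is false for this $\Phi$. Take the rank-one operator $A=1_B\otimes 1_B$, so $\Norm{A}{S^1}=\mu(B)$. Whenever $CB_Q\subset B$ one gets $\abs{\Phi(A)(Q)}=\mu(E_Q)\mu(F_Q)/\mu(B_Q)$. This is $\sim\mu(Q)$ as soon as $E_Q,F_Q$ have measure proportional to $\mu(B_Q)$, as happens for instance for $b$ linear near $B\subset\R^d$. Hence every sufficiently small generation contributes $\gtrsim\mu(B)$, and $\sum_Q\abs{\Phi(A)(Q)}=\infty$.

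Passing to martingale-difference pieces does not repair this. The bound \eqref{eq:osc<Phi} rests on the test functions being non-cancellative and sign-coherent on $E_Q\times F_Q$, so the modified functional no longer dominates the oscillation. Moreover, the ``tail'' you would have to control is a non-cancellative family of exactly the original type, so the argument is circular.

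\textbf{Route (b) fails in the same way.} Its $S^2$ step rests on a Bessel property of the factors which, as you note yourself, does not hold. The natural way to reach $p<2$ by duality is to interpolate the dual map $\lambda\mapsto\sum_Q\lambda_Q\,e_Q\otimes h_Q$ between $\ell^2\to S^2$ and $\ell^\infty\to\bddlin(L^2(\mu))$. The latter endpoint is false: choose unimodular $\lambda_Q$ on $N$ generations of cubes with $CB_Q\subset B$, test on $1_B$, and the norm is $\gtrsim N$. The range $p\le 2$ cannot be dropped. It is needed in case \eqref{it:main-reg} of Theorem \ref{thm:main} when $d\in(1,2]$, where Corollary \ref{cor:osc<comm-asymp} uses the bound at $p=d$ and at some lower exponent.

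\textbf{The missing idea.} For $p>1$ no cancellation or Bessel property of $e_Q,h_Q$ is needed, only a maximal-function condition of Rochberg--Semmes (NWO) type. The paper invokes \cite[Corollary 7.7]{Hyt:Sp}, which gives, for all $p\in(1,\infty)$ and $q\in[1,\infty]$,
\begin{equation*}
  \Norm{\Phi}{S^{p,q}\to\ell^{p,q}}\lesssim\Norm{M_{\mathcal E}}{L^2(\mu)\times L^2(\mu)\to L^1(\mu)},\qquad
  M_{\mathcal E}(f,g)(x):=\sup_{Q\in\mathscr D}1_Q(x)\frac{\abs{\pair{f}{e_Q}\pair{g}{h_Q}}}{\mu(Q)}.
\end{equation*}
This criterion uses only size and support information on $e_Q,h_Q$, and it is where the restriction $p>1$ enters. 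For normalised indicators supported in $CB_Q$ one trivially has $M_{\mathcal E}(f,g)\lesssim Mf\cdot Mg$, with $M$ the Hardy--Littlewood maximal operator. Cauchy--Schwarz and the $L^2(\mu)$-boundedness of $M$ then finish the proof in two lines. Your closing remark points towards \cite{Hyt:Sp}, but as written the proposal contains no valid argument for the boundedness of $\Phi$.
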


\begin{proof}
Under the assumptions on $b$ and $T$, we first claim the following: for every ball $B\subset X$, there are subsets $E,F\subset B^*:=C\cdot B$ such that
\begin{equation}\label{eq:osc<comm-base}
  \operatorname{osc}_\mu(b,B)\lesssim\frac{\abs{\pair{[b,T]1_E}{1_F}}}{\mu(B)}.
\end{equation}
Indeed, for $\phi$-fractional integrals $T$, this is \cite[Corollary 6.4]{HW:frac}, and the case of singular integrals follows by the same argument, taking $\phi\equiv 1$.

We then apply \eqref{eq:osc<comm-base} with each $B=B_Q$ to find subsets $E_Q,F_Q\subset B_Q^*:=C\cdot B_Q$ such that
\begin{equation*}
   \operatorname{osc}_\mu(b,B_Q)
   \lesssim\frac{\abs{\pair{[b,T]1_{E_Q}}{1_{F_Q}}}}{\mu(B_Q)}
   =:\pair{[b,T]e_Q}{h_Q}
\end{equation*}
provided that
\begin{equation*}
  e_Q:=\frac{1_{E_Q}}{\mu(B_Q)^{\frac12}},\qquad
  h_Q:=\sigma_Q\frac{1_{F_Q}}{\mu(B_Q)^{\frac12}},\qquad
  \sigma_Q:=\overline{\sign(\pair{[b,T]1_{E_Q}}{1_{F_Q}})},
\end{equation*}
where $\sign(z):=z/\abs{z}$ for $z\in\C\setminus\{0\}$. Hence, the linear operator
\begin{equation*}
  \Phi(A):=\pair{Ae_Q}{h_Q}
\end{equation*}
satisfies \eqref{eq:osc<Phi}, and it remains to check the claimed boundedness properties. Evidently $\Norm{e_Q}{L^2(\mu)},\Norm{h_Q}{L^2(\mu)}\lesssim 1$, so indeed $\Phi:\bddlin(L^2(\mu))\to\ell^\infty(\mathscr D)$ is bounded.

By \cite[Corollary 7.7]{Hyt:Sp}, we have
\begin{equation*}
  \Norm{\Phi}{S^{p,q}(L^2(\mu))\to \ell^{p,q}(\mathscr D)}
  \lesssim\Norm{M_{\mathcal E} }{L^2(\mu)\times L^2(\mu)\to L^1(\mu)},
\end{equation*}
where $M_{\mathcal E}$ is the bi-sublinear maximal operator
\begin{equation*}
  M_{\mathcal E}(f,g)(x)
  :=\sup_{Q\in\mathscr D}1_Q(x)\frac{\abs{\pair{f}{e_Q}\pair{g}{h_Q}}}{\mu(Q)}.
\end{equation*}
But it is immediate from the definition of $e_Q$ and $h_Q$ that
\begin{equation*}
  M_{\mathcal E}(f,g)(x)
  \lesssim \sup_{Q\in\mathscr D}1_Q(x)\fint_{B_Q^*}\abs{f}\ud\mu\cdot\fint_{B_Q^*}\abs{g}\ud\mu
  \leq Mf(x)Mg(x),
\end{equation*}
where $M$ is the Hardy--Littlewood maximal operator. The $L^2(\mu)\times L^2(\mu)\to L^1(\mu)$ bound of $M_{\mathcal E}$ then follows from the Cauchy--Schwarz inequality and the Hardy--Littlewood maximal theorem on the $L^2(\mu)$ boundedness of $M$.
\end{proof}

\begin{proposition}\label{prop:Ainfty}
Let $\mu,\nu$ be two doubling measures on a metric space $(X,\rho)$ such that they satisfy the $A_\infty$ relation \eqref{eq:Ainfty}. Then there is a sublinear operator $S_{\nu,\mu}$ on $c_0(\mathscr D)$ such that
\begin{equation*}
  \operatorname{osc}_\nu(b,B_Q)
  \lesssim S_{\mu,\nu}(\{\operatorname{osc}_\mu(b,B_R)\}_{R\in\mathscr D})(Q)
\end{equation*}
and $S_{\mu,\nu}$ is bounded on $\ell^{p,q}(\mathscr D)$ for all $p\in(0,\infty)$ and $q\in(0,\infty]$.
\end{proposition}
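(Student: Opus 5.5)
Fix a dyadic system $\mathscr D$ on $X$ with cubes $Q$ and associated balls $B_Q\supseteq Q$ of comparable size; I will use two standard facts. First, the oscillation of $b$ over a ball $B$ with respect to $\nu$ can be controlled by a weighted sum of $\mu$-oscillations over the balls $B_R$ with $R\subseteq$ a dilate of $Q$. Second, because of the $A_\infty$ relation, these weights decay geometrically in the relative scale. Concretely, I would aim for the pointwise bound
\begin{equation*}
  \operatorname{osc}_\nu(b,B_Q)\lesssim\sum_{k\geq0}2^{-\epsilon k}\max_{\substack{R\in\mathscr D,\ R\subseteq Q^*\\ \ell(R)=2^{-k}\ell(Q)}}\operatorname{osc}_\mu(b,B_R),
\end{equation*}
where $Q^*$ is a bounded union of neighbouring cubes of the same generation covering $B_Q$. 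Then I define $S_{\mu,\nu}(a)(Q)$ as the right-hand side with $\operatorname{osc}_\mu(b,B_R)$ replaced by $\abs{a_R}$. This is clearly sublinear and maps $c_0$ to $c_0$, since only finitely many $R$ of each generation occur and the series converges.

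To prove the pointwise bound, I would write $\operatorname{osc}_\nu(b,B)\le 2\fint_B|b-m_B|\,d\nu$, where $m_B$ is a $\mu$-median of $b$ on $B$. The key is that the $A_\infty$ relation \eqref{eq:Ainfty} makes medians (or level sets of proportion $\eps$) comparable in the two measures. This gives a local good-$\lambda$/John--Strömberg type inequality: $\operatorname{osc}_\nu(b,B)\lesssim$ the $\nu$-average over $B$ of a local sharp-type function built from $\mu$-oscillations. More precisely, I would run the Calderón--Zygmund / stopping-time (Lerner local mean oscillation) decomposition on $B_Q$, using $\mu$-medians and $\mu$-oscillation functionals. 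This gives $|b-m|\lesssim\sum_{R\in\mathcal S}\operatorname{osc}_\mu(b,B_R)1_R$ on $Q$, with $\mathcal S$ a $\mu$-sparse family. By the $A_\infty$ relation, $\mu$-sparse implies $\nu$-sparse, and this is exactly where the $A_\infty$ relation is used. Moreover, $\nu(R)/\nu(Q)\lesssim 2^{-\epsilon k}$ for $R$ of generation $k$ below $Q$, by iterating \eqref{eq:Ainfty} together with doubling. Integrating against $\nu$ and bounding each generation's contribution by its maximum gives the claimed decaying sum.

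For boundedness on $\ell^{p,q}(\mathscr D)$: each level-$k$ piece $a\mapsto(\max_{R\subseteq Q^*,\,\ell(R)=2^{-k}\ell(Q)}|a_R|)_Q$ is controlled by the map sending $R$ to its ancestor $k$ levels up (plus boundedly many neighbours). This map is injective-to-boundedly-many in reverse. On the other hand, each $Q$ takes the maximum over up to $\sim 2^{kd'}$ descendants, where $d'$ is a doubling dimension. Hence the level-$k$ operator has distribution function bounds $\#\{Q:\cdot>\lambda\}\le C\#\{R:|a_R|>\lambda\}$, so it is bounded on $\ell^{p,q}$ with norm $O(1)$ uniformly in $k$. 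Multiplying by $2^{-\epsilon k}$ and summing, using the $\min(1,p,q)$-triangle inequality of the quasi-norm, gives boundedness for all $p\in(0,\infty)$, $q\in(0,\infty]$.

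The main obstacle is the pointwise step: turning the $A_\infty$ relation into geometric decay across scales while passing from $\mu$- to $\nu$-oscillation. If the sparse-domination route gets messy, I would first try a direct argument. For $b$ with small $\mu$-oscillation at all subscales of $Q$, the $\mu$-medians at consecutive scales differ by $\lesssim\operatorname{osc}_\mu$. Then I would telescope medians along chains of cubes and control the $\nu$-measure of the bad set at each level by $(1-\delta)^k$, using \eqref{eq:Ainfty}.
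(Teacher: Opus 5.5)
There is a genuine gap. The pointwise bound you aim for,
\[
\operatorname{osc}_\nu(b,B_Q)\lesssim\sum_{k\ge0}2^{-\epsilon k}\omega_k,\qquad \omega_k:=\max_{R\subseteq Q^*,\ \ell(R)=2^{-k}\ell(Q)}\operatorname{osc}_\mu(b,B_R),
\]
is false for every $\epsilon>0$, so the operator you define cannot work. Here is a counterexample.
\begin{itemize}
\item Take $\mu=\ud x$ on $\R$, $0<a<1$, integers $N<m$, and $s:=2^{N-m}$.
\item Let $\ud\nu:=\operatorname{dist}(x,s\Z)^{-a}\ud x$. This is an $A_1$ weight whose constant depends only on $a$, by scale invariance.
\item Let $b:=1_E$ with $E:=\{x:\operatorname{dist}(x,s\Z)<2^{-m}\}$, and $Q=[0,1)$.
\end{itemize}
A ball $B$ of radius $r$ has $\operatorname{osc}_\mu(b,B)\le\abs{E\cap B}/\abs{B}\lesssim\min\{1,\max(2^{-N},2^{-m}/r)\}$. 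Hence $\omega_k\lesssim\min\{1,\max(2^{-N},2^{k-m})\}$, and your right-hand side is $\lesssim_\epsilon 2^{-N}+2^{-\epsilon m}$. On the other hand, every interval $I$ of length $\gg s$ has $\nu(E\cap I)/\nu(I)\sim(2^{-m}/s)^{1-a}=2^{-(1-a)N}$, so $\operatorname{osc}_\nu(b,B_Q)\sim 2^{-(1-a)N}$. Choosing $m\ge N/\epsilon$ makes the ratio $\gtrsim 2^{aN}\to\infty$. This is not an artefact of letting $\nu$ vary: placing suitably normalised copies on the unit intervals $[j,j+1)$, with singularities at cell midpoints, $N_j\to\infty$ and $m_j=N_j^2$, yields one fixed $A_1$ weight with the same failure.

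The faulty step is ``bounding each generation's contribution by its maximum gives the claimed decaying sum''. After integrating the sparse bound against $\nu$, the level-$k$ contribution is $\omega_k$ times $\nu\big(\bigcup\{R\in\mathcal S:\ell(R)=2^{-k}\ell(Q)\}\big)/\nu(Q)$. The single-cube decay $\nu(R)/\nu(Q)\lesssim 2^{-\epsilon k}$ says nothing about a union of up to $2^{kd'}$ cubes. Sparseness combined with $A_\infty$ gives geometric decay in the \emph{stopping generation}, but a stopping cube of generation $j$ can sit at any dyadic level $k\ge j$. In the example, the very first stopping cubes are at level $\approx m$ and already carry $\nu$-proportion $2^{-(1-a)N}\gg 2^{-\epsilon m}$. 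Your median-telescoping fallback fails for the same reason. Your counting argument for each fixed level is correct; what has no justification is the factor $2^{-\epsilon k}$ that lets you sum over $k$.

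What is needed is an operator without decay across levels that still records where the oscillation sits. The paper does this in three steps:
\begin{itemize}
\item It uses the reverse H\"older consequence of $A_\infty$, namely $\operatorname{osc}_\nu(b,B)\lesssim\inf_c(\fint_B\abs{b-c}^t\ud\mu)^{1/t}$ for some $t>1$.
\item It transfers the balls $B_Q$ to cubes of boundedly many adjacent dyadic systems, via composition operators with bounded fibres.
\item It applies the John--Nirenberg-type estimate
\[
\inf_{c}\Big(\fint_R\abs{b-c}^t\ud\mu\Big)^{\frac1t}\lesssim\Big(\frac{1}{\mu(R)}\sum_{P\subseteq R}\mu(P)\operatorname{osc}_\mu(b,P)^t\Big)^{\frac1t},
\]
whose right-hand side is a Carleson operator.
\end{itemize}
This operator keeps the weights $\mu(P)/\mu(R)$, which sum to $1$ at each level, instead of taking a maximum over the level; that is exactly the information your construction discards. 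Its boundedness on $\ell^{p,q}$ for all $p\in(0,\infty)$, $q\in(0,\infty]$ is a genuine input quoted from earlier work. It cannot be recovered by summing uniformly bounded level pieces.
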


\begin{proof}
Under the assumptions of the proposition, the proof of \cite[Proposition 1.2]{Hyt:Osc} contains the estimate
\begin{equation}\label{eq:osc-nu<osc-mu-t}
  \operatorname{osc}_\nu(b,B)
  \lesssim\operatorname{osc}_{\mu,t}(b,B)
  :=\inf_{c\in\C}\Big(\fint_B\abs{b-c}^t\ud\mu\Big)^{\frac1t}
\end{equation}
for every metric ball $B$ and some $t>1$ depending on the pair of measures $(\mu,\nu)$. In particular, this holds for each $B=B_Q$.

Next, let $\mathscr D^m$, where $m=1,\ldots,M$, be the boundedly many {\em adjacent dyadic systems} with the property
\begin{equation}\label{eq:adjacent}
  \forall B=B(x,r)\ \exists\ R\in\mathscr D^*:=\bigcup_{m=1}^M\mathscr D^m:\quad
  B\subset R,\quad\operatorname{diam}(R)\lesssim r;
\end{equation}
these are constructed in \cite[Theorem 4.1]{HK:12}. We define $\rho:\mathscr D\to\mathscr D^*$ by letting $\rho(Q)$ be a choice of a cube $R\in\mathscr D^*$ guaranteed by the defining property \eqref{eq:adjacent} applied to $B=B_Q$.

Given $R\in\mathscr D^*$, we claim that there are only boundedly many cubes $Q\in\mathscr D$ with $\rho(Q)=R$. Indeed, all such $Q$ satisfy $Q\subset R$, $\operatorname{diam}(Q)\sim\operatorname{diam}(R)$, and hence $\mu(Q)\sim\mu(R)$. Thus, these cubes $Q$ belong to at most boundedly many generations. And there are at most boundedly many $Q$ of a fixed generation, hence only boundedly many $Q$ altogether.

We have now seen that
\begin{equation}\label{eq:osc-BQ<osc-R}
   \operatorname{osc}_{\mu,t}(b,B_Q)
   \lesssim\operatorname{osc}_{\mu,t}(b,\rho(Q))
   =:C_\rho(\{\operatorname{osc}_{\mu,t}(b,R)\}_{R\in\mathscr D^*})(Q),\quad Q\in\mathscr D,
\end{equation}
where $C_\rho$ is the composition operator $C_\rho F:=F\circ \rho$. It is evident that $C_\rho:\ell^\infty(\mathscr D^*)\to\ell^\infty(\mathscr D)$ is linear. Moreover, for all $p\in(0,\infty)$,
\begin{equation*}
  \Norm{C_\rho F}{\ell^p(\mathscr D)}^p
  =\sum_{Q\in\mathscr D} F(\rho(Q))^p
  =\sum_{R\in\mathscr D^*}\sum_{\substack{Q\in\mathscr D \\ \rho(Q)=R}} F(R)^p 
  \lesssim \sum_{R\in\mathscr D^*} F(R)^p 
  =\Norm{F}{\ell^p(\mathscr D^*)}^p,
\end{equation*}
since $\#\{Q\in\mathscr D: \rho(Q)=R\}\lesssim 1$, as we just argued. By interpolation, it also follows that
\begin{equation*}
  \Norm{C_\rho F}{\ell^{p,q}(\mathscr D)}
  \lesssim \Norm{F}{\ell^{p,q}(\mathscr D^*)}
\end{equation*}
for all $p\in(0,\infty)$ and $q\in(0,\infty]$.

Next, following \cite[page 34]{Hyt:Sp}, we need the notion of the Carleson operator with parameter $r\in[1,\infty)$,
\begin{equation*}
  (\operatorname{Car}_{\mu,r;\mathscr D^m} \lambda)(P):=
  \Big(\frac{1}{\mu(P)}\sum_{\substack{ Q\in\mathscr D^m \\ Q\subseteq P}}
    \mu(Q)\abs{\lambda_Q}^r\Big)^{\frac1r},\quad
    P\in\mathscr D^m,\quad\lambda=(\lambda_Q)_{Q\in\mathscr D^m}.
\end{equation*}
These are sublinear operators bounded on $\ell^{p,q}(\mathscr D^m)$ for all $p\in(0,\infty)$ and $q\in(0,\infty]$. Indeed, case $r=1$ is \cite[Proposition 7.4]{Hyt:Sp}, and the general case follows from this via
\begin{equation*}
\begin{split}
  \Norm{\operatorname{Car}_{\mu,r;\mathscr D^m} \lambda}{\ell^{p,q}}
  =\Norm{(\operatorname{Car}_{\mu,r;\mathscr D^m}(\abs{\lambda}^r))^{\frac1r}}{\ell^{p,q}} 
  &=\Norm{\operatorname{Car}_{\mu,r;\mathscr D^m}(\abs{\lambda}^r)}{\ell^{\frac pr,\frac qr}}^{\frac1r} \\
  &\lesssim\Norm{\abs{\lambda}^r}{\ell^{\frac pr,\frac qr}}^{\frac1r}
  =\Norm{\lambda}{\ell^{p,q}}
\end{split}
\end{equation*}
We also define the natural extension
\begin{equation*}
  (\operatorname{Car}_{\mu,r;\mathscr D^*} \lambda)(P):=
  \sum_{m=1}^M 1_{\mathscr D^m}(P)(\operatorname{Car}_{\mu,r;\mathscr D^m} \lambda)(P),
  \quad P\in\mathscr D^*,\quad\lambda=(\lambda_Q)_{Q\in\mathscr D^*},
\end{equation*}
and it is evident that these inherit the same boundedness on $\ell^{p,q}(\mathscr D^*)$ for $p\in(0,\infty)$ and $q\in(0,\infty]$.

Next, the proof of \cite[Proposition 11.3]{Hyt:Sp}, see \cite[page 50]{Hyt:Sp}, contains the estimate
\begin{equation*}
  \operatorname{osc}_{\mu,t}(b,R)
  \lesssim
  \operatorname{Car}_{\mu,t;\mathscr D^m}(
  \{\operatorname{osc}_{\mu}(b,P)\}_{P\in\mathscr D^m})(R),\quad
  R\in\mathscr D^m,
\end{equation*}
which trivially implies
\begin{equation}\label{eq:osc-t<Car}
  \operatorname{osc}_{\mu,t}(b,R)
  \lesssim
  \operatorname{Car}_{\mu,t;\mathscr D^*}(
  \{\operatorname{osc}_{\mu}(b,P)\}_{P\in\mathscr D^*})(R),\quad
  R\in\mathscr D^*.
\end{equation}

Finally, for every $P\in\mathscr D^*$, let $\sigma(P)=S\in\mathscr D$ be the unique cube of the same generation with centre $z_S\in P$. If the constant $c$ in the definition of $B_S$ is large enough, it follows that $P\subset B_S$, and hence
\begin{equation}\label{eq:osc-P<osc-BQ}
   \operatorname{osc}_\mu(b,P)\lesssim\operatorname{osc}_\mu(b,B_{\sigma(P)})
   =C_{\sigma}(\{\operatorname{osc}_\mu(b,B_S)\}_{S\in\mathscr D})(P),
\end{equation}
where $C_\sigma$ is the composition operator induced by $\sigma:\mathscr D^*\to\mathscr D$ in the same way as $C_\rho$ was induced by $\sigma:\mathscr D\to\mathscr D^*$. In a similar way, one checks that $\#\{P\in\mathscr D^*:\sigma(P)=S\}$ is uniformly bounded over all $S\in\mathscr D$, and hence $C_\sigma:\ell^{p,q}(\mathscr D)\to\ell^{p,q}(\mathscr D^*)$ is bounded for all $p\in(0,\infty)$ and $q\in(0,\infty]$.

Combining the estimates above, we obtain
\begin{equation*}
\begin{split}
  \operatorname{osc}_{\nu}(b;B_Q)
  &\lesssim \operatorname{osc}_{\mu,t}(b,B_Q)\qquad\text{by \eqref{eq:osc-nu<osc-mu-t}} \\
  &\lesssim C_\rho(\{\operatorname{osc}_{\mu,t}(b,R)\}_{R\in\mathscr D^*})(Q)
    \qquad\text{by \eqref{eq:osc-BQ<osc-R}} \\
  &\lesssim C_\rho\circ \operatorname{Car}_{\mu,t;\mathscr D^*}(
  \{\operatorname{osc}_{\mu}(b,P)\}_{P\in\mathscr D^*})(Q)
    \qquad\text{by \eqref{eq:osc-t<Car}} \\
  &\lesssim C_\rho\circ \operatorname{Car}_{\mu,t;\mathscr D^*}\circ\, C_\sigma(
  \{\operatorname{osc}_{\mu}(b,B_S)\}_{S\in\mathscr D})(Q)
    \qquad\text{by \eqref{eq:osc-P<osc-BQ}},
\end{split}
\end{equation*}
where
\begin{equation*}
\begin{split}
  C_\sigma &:\ell^{p,q}(\mathscr D)\to\ell^{p,q}(\mathscr D^*),\\
  \operatorname{Car}_{\mu,t;\mathscr D^*} &:\ell^{p,q}(\mathscr D^*)\to\ell^{p,q}(\mathscr D^*),\\
  C_\rho &:\ell^{p,q}(\mathscr D^*)\to\ell^{p,q}(\mathscr D)
\end{split}
\end{equation*}
are all bounded for all $p\in(0,\infty)$ and $q\in(0,\infty]$. The operators $C_\sigma$ and $C_\rho$ are linear, while
 $\operatorname{Car}_{\mu,t;\mathscr D^*}$ is sublinear; thus, the composition
 \begin{equation*}
  S_{\mu,\nu}:=C_\rho\circ \operatorname{Car}_{\mu,t;\mathscr D^*}\circ\, C_\sigma
\end{equation*}
is a sublinear operator bounded on $\ell^{p,q}(\mathscr D)$ for all $p\in(0,\infty)$ and $q\in(0,\infty]$. This completes the proof.
\end{proof}

\begin{corollary}\label{cor:osc-nu<Phi}
Let $\mu,\nu$ be two doubling measures on a metric space $(X,\rho)$ that satisfy the $A_\infty$ relation \eqref{eq:Ainfty}. Let $b\in L^1_{\loc}(\mu)$ and $T\in\bddlin(L^2(\mu))$ be an operator with strongly non-degenerate singular kernel. Then there is a sublinear operator $\tilde\Phi:\bddlin(L^2(\mu))\to\ell^\infty(\mathscr D)$ such that
\begin{equation}\label{eq:osc-nu<Phi}
   \operatorname{osc}_\nu(b,B_Q)
   \lesssim\tilde\Phi([b,T])(Q)
\end{equation}
and $\tilde\Phi:S^{p,q}(L^2(\mu))\to \ell^{p,q}(\mathscr D)$ is bounded for all $p\in(1,\infty)$ and $q\in[1,\infty]$. The operator $\tilde\Phi$ is allowed to depend on both $b$ and $T$, but the implicit constants in \eqref{eq:osc-nu<Phi} and the boundedness of $\Phi$ are independent of $b$.
\end{corollary}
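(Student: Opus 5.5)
The plan is to compose the two preceding propositions. Proposition \ref{prop:osc<Phi} supplies a linear operator $\Phi$ with $\operatorname{osc}_\mu(b,B_R)\lesssim\Phi([b,T])(R)$ for all $R\in\mathscr D$, bounded $S^{p,q}(L^2(\mu))\to\ell^{p,q}(\mathscr D)$ for $p\in(1,\infty)$, $q\in[1,\infty]$. Proposition \ref{prop:Ainfty} supplies a sublinear operator $S_{\mu,\nu}$, bounded on every $\ell^{p,q}(\mathscr D)$, with $\operatorname{osc}_\nu(b,B_Q)\lesssim S_{\mu,\nu}(\{\operatorname{osc}_\mu(b,B_R)\}_R)(Q)$. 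I would define
\begin{equation*}
  \tilde\Phi(A):=S_{\mu,\nu}\big(\abs{\Phi(A)}\big),
\end{equation*}
where $\abs{\Phi(A)}$ is the sequence $(\abs{\Phi(A)(R)})_{R\in\mathscr D}$. Since $\Phi$ is linear and $S_{\mu,\nu}$ is sublinear, and the map $x\mapsto\abs{x}$ is sublinear, $\tilde\Phi$ is sublinear. Its boundedness on $S^{p,q}\to\ell^{p,q}$ for $p\in(1,\infty)$, $q\in[1,\infty]$ follows immediately by composing the two bounds, since the Lorentz quasi-norms only see absolute values.

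For \eqref{eq:osc-nu<Phi}, the one point requiring care is monotonicity: I must pass from the pointwise inequality $\operatorname{osc}_\mu(b,B_R)\lesssim\abs{\Phi([b,T])(R)}$ to an inequality after applying $S_{\mu,\nu}$. Rather than proving monotonicity abstractly, I would inspect the construction $S_{\mu,\nu}=C_\rho\circ\operatorname{Car}_{\mu,t;\mathscr D^*}\circ C_\sigma$. The composition operators $C_\rho$ and $C_\sigma$ are evidently positive and order-preserving. The Carleson operators are monotone in $\abs{\lambda_Q}$ and positively homogeneous. Hence $S_{\mu,\nu}$ is monotone on nonnegative sequences and homogeneous, so
\begin{equation*}
  \operatorname{osc}_\nu(b,B_Q)\lesssim S_{\mu,\nu}(\{\operatorname{osc}_\mu(b,B_R)\})(Q)\lesssim S_{\mu,\nu}(\abs{\Phi([b,T])})(Q)=\tilde\Phi([b,T])(Q).
\end{equation*}

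Two further points remain. First, $\tilde\Phi$ should map $\bddlin(L^2(\mu))$ into $\ell^\infty(\mathscr D)$. Here $\Phi(A)\in\ell^\infty$, but $S_{\mu,\nu}$ was defined on $c_0$ and is stated bounded on $\ell^{p,q}$. Boundedness on $\ell^\infty$ holds because $C_\rho$ and $C_\sigma$ are trivially bounded there. $\operatorname{Car}$ is also bounded on $\ell^\infty$, by the dyadic packing estimate $\sum_{Q\subseteq P}\mu(Q)\lesssim\mu(P)$. That estimate fails in general for all dyadic subcubes, however, so I expect this to be the main (minor) obstacle. If it does fail, I would instead restrict the domain to the relevant class of operators, e.g.\ compact ones, or simply define $\tilde\Phi$ as a map into $[0,\infty]^{\mathscr D}$, which suffices for the later application to Corollary \ref{cor:liminf}. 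Second, the constants must be independent of $b$, which holds because both propositions already give $b$-independent constants.
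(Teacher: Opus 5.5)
Your proposal is correct and is the paper's proof, which simply sets $\tilde\Phi:=S_{\mu,\nu}\circ\Phi$. Your extra $\abs{\cdot}$ is harmless, and in fact redundant, since $\operatorname{Car}_{\mu,t;\mathscr D^*}$ only sees $\abs{\lambda_Q}$ and $C_\sigma$ commutes with absolute values; your check that $S_{\mu,\nu}$ is monotone and positively homogeneous supplies a step the paper leaves implicit.

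Your doubt about the $\ell^\infty(\mathscr D)$ codomain is justified. Each generation of dyadic subcubes of $P$ contributes $\mu(P)$ to $\sum_{Q\subseteq P}\mu(Q)$, so $\operatorname{Car}$ of a constant sequence is infinite and $\operatorname{Car}$ is not bounded on $\ell^\infty$. That part of the statement should therefore be read in the $[0,\infty]$-valued sense you propose. This is all that Corollary \ref{cor:osc<comm-asymp} uses, since only the $S^q$ and $S^r$ bounds with $1<q<p<r<\infty$ enter there.
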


\begin{proof}
It suffices to take $\tilde\Phi:=S_{\mu,\nu}\circ\Phi$, where $\Phi:S^{p,q}(L^2(\mu))\to\ell^{p,q}(\mathscr D)$ is the linear operator from Proposition \ref{prop:osc<Phi}, and $S_{\mu,\nu}:\ell^{p,q}(\mathscr D)\to\ell^{p,q}(\mathscr D)$ is the sublinear operator from Proposition \ref{prop:Ainfty}.
\end{proof}

\begin{corollary}\label{cor:osc<comm-asymp}
Let $\mu,\nu$ be two doubling measures on a metric space $(X,\rho)$ that satisfy the $A_\infty$ relation \eqref{eq:Ainfty}. Let $b\in L^1_{\loc}(\mu)$ and $T\in\bddlin(L^2(\mu))$ be an operator with strongly non-degenerate singular kernel.
Then, for every $p\in(1,\infty)$ and $q\in[1,\infty]$,
\begin{equation}\label{eq:osc<comm-bound}
  \Norm{\{\operatorname{osc}_\nu(b,B_Q)\}_{Q\in\mathscr D}}{\ell^{p,q}(\mathscr D)}
  \lesssim \Norm{[b,T]}{S^{p,q}(L^2(\mu))},
\end{equation}
and there is $\theta\in(0,1)$ depending only on $p$ such that
\begin{equation}\label{eq:osc<comm-asymp}
\begin{split}
  \liminf_{\lambda\to 0}\lambda \cdot
  (\#\{Q\in\mathscr D &: \operatorname{osc}_\nu(b,B_Q)>\lambda\})^{\frac 1p} \\
  &\lesssim
  \Big(\liminf_{n\to\infty} a_n([b,T])\cdot n^{\frac1p}\Big)^\theta
  \Norm{[b,T]}{S^{p,\infty}(L^2(\mu))}^{1-\theta}.
\end{split}
\end{equation}
In fact, this holds for every $\theta\in(0,1-\frac1p)$.
\end{corollary}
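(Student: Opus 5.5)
The plan is to deduce both estimates directly from Corollary \ref{cor:osc-nu<Phi} together with the noncommutative asymptotic interpolation of Corollary \ref{cor:liminf}.

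\textbf{The bound \eqref{eq:osc<comm-bound}.} Let $\tilde\Phi$ be the sublinear operator provided by Corollary \ref{cor:osc-nu<Phi}. Since $\ell^{p,q}(\mathscr D)$ quasi-norms are monotone under pointwise domination, \eqref{eq:osc-nu<Phi} and the boundedness $\tilde\Phi:S^{p,q}(L^2(\mu))\to\ell^{p,q}(\mathscr D)$ give \eqref{eq:osc<comm-bound} immediately. The implicit constants are independent of $b$, as stated in Corollary \ref{cor:osc-nu<Phi}.

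\textbf{Setting up Corollary \ref{cor:liminf}.} For \eqref{eq:osc<comm-asymp}, apply Corollary \ref{cor:liminf} to $\tilde\Phi$, with $\nu$ replaced by the counting measure on $\mathscr D$ and $A=[b,T]$. Fix $p\in(1,\infty)$ and choose exponents $1<q_0<p<r\le\infty$.

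\begin{itemize}
\item On the lower endpoint, Corollary \ref{cor:osc-nu<Phi} gives $\tilde\Phi:S^{q_0}=S^{q_0,q_0}\to\ell^{q_0,q_0}\subset\ell^{q_0,\infty}$.
\item On the upper endpoint with $r<\infty$, it gives $\tilde\Phi:S^{r}\to\ell^{r,\infty}$.
\item Alternatively, take $r=\infty$. Then $\tilde\Phi:\bddlin(L^2(\mu))\to\ell^\infty(\mathscr D)$ is bounded: $\Phi$ is bounded there, and $S_{\mu,\nu}$ acts boundedly on $\ell^\infty$ (the Carleson operators and composition operators are bounded on $\ell^\infty$). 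This restricts to the compact operators $\mathcal K$.
\item Sublinearity, so that $K=1$ in \eqref{eq:quasi-sub}, holds because $\Phi$ is linear and $S_{\mu,\nu}$ is sublinear.
\end{itemize}

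One technical point needs checking. The operator $\tilde\Phi$ must act on $S^{q_0}+S^r$, and the estimate \eqref{eq:osc-nu<Phi} must be applied only at the specific operator $A=[b,T]$. This is harmless because $\tilde\Phi$ is defined on all of $\bddlin(L^2(\mu))$. If $[b,T]\notin S^{p,\infty}$, the right-hand side is infinite and there is nothing to prove.

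\textbf{Deducing \eqref{eq:osc<comm-asymp}.} Corollary \ref{cor:liminf} yields
\begin{equation*}
  \liminf_{\lambda\to 0}\lambda\cdot\#\{Q:\abs{\tilde\Phi([b,T])(Q)}>\lambda\}^{\frac1p}
  \lesssim\Big(\liminf_{n\to\infty}a_n([b,T])\,n^{\frac1p}\Big)^\theta\Norm{[b,T]}{S^{p,\infty}}^{1-\theta}.
\end{equation*}
By \eqref{eq:osc-nu<Phi} we have $\{\operatorname{osc}_\nu(b,B_Q)>\lambda\}\subseteq\{\tilde\Phi([b,T])(Q)>c\lambda\}$. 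Rescaling $\lambda$ by the constant $c$ changes the $\liminf$ only by a constant factor, which gives \eqref{eq:osc<comm-asymp}.

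\textbf{Range of $\theta$.} With $r=\infty$, Theorem \ref{thm:liminf} gives $\theta=(p-q_0)/p=1-q_0/p$. Letting $q_0\downarrow 1$ sweeps out every value in $(0,1-\frac1p)$. To get a smaller $\theta$, use the elementary observation that
\[
\liminf_{n\to\infty} a_n n^{1/p}\le\Norm{[b,T]}{S^{p,\infty}},
\]
so the right-hand side of \eqref{eq:osc<comm-asymp} increases as $\theta$ decreases. Hence an estimate for one $\theta$ implies it for every smaller $\theta$.

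The main obstacle is verifying the $L^\infty$ endpoint, namely the boundedness of $\operatorname{Car}_{\mu,t}$ on $\ell^\infty$. If that fails, I would fall back on finite $r>p$ and compute $\theta=(p-q_0)(r-p)/(p(r-q_0))$. Sending $q_0\to1$ and $r\to\infty$ again covers every $\theta<1-\frac1p$.
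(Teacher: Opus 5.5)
Once you take the fallback, your argument is the paper's proof. You get \eqref{eq:osc<comm-bound} from Corollary~\ref{cor:osc-nu<Phi}, apply Corollary~\ref{cor:liminf} to $\tilde\Phi$ with $1<q_0<p<r<\infty$, and let $q_0\to1^+$, $r\to\infty$. The map $(q_0,r)\mapsto\frac{(p-q_0)(r-p)}{p(r-q_0)}$ already covers all of $(0,1-\frac1p)$, so your monotonicity remark in $\theta$ is correct but not needed.

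You should drop the $r=\infty$ route you put first, because its justification is false. The Carleson operators are not bounded on $\ell^\infty$. Each generation of dyadic subcubes of $P$ partitions $P$, so for $\lambda\equiv1$ one gets $\frac{1}{\mu(P)}\sum_{Q\subseteq P}\mu(Q)=\sum_{k\geq0}1=\infty$. Truncating to finitely many generations shows the same failure even on finitely supported sequences. Hence $S_{\mu,\nu}$ gives you no endpoint $\tilde\Phi:\mathcal K(L^2(\mu))\to\ell^\infty(\mathscr D)$; this is why only boundedness on $\ell^{p,q}$ with $p<\infty$ is available.

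With finite $r$, the domain issue you raised also disappears. You only need $\tilde\Phi$ on $S^{q_0}+S^r=S^r$, where Corollary~\ref{cor:osc-nu<Phi} already gives values in $\ell^r(\mathscr D)$, and $[b,T]\in S^{p,\infty}\subset S^r$.
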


\begin{proof}
The norm bound \eqref{eq:osc<comm-bound} is an immediate consequence of Corollary \ref{cor:osc-nu<Phi} (or, under slightly stronger assumptions on $T$, the combination of \cite[Proposition 1.2]{Hyt:Osc} and \cite[Proposition 1.26]{Hyt:Sp}). Moreover, by Corollary \ref{cor:osc-nu<Phi}, the sublinear operator $\tilde\Phi$ in \eqref{eq:osc-nu<Phi} satisfies the assumptions, and hence the conclusions, of Corollary \ref{cor:liminf} for all $1<q<p<r<\infty$.
Thus \eqref{eq:osc<comm-asymp} holds with $\theta=\frac{(p-q)(r-p)}{p(r-q)}$. Taking $q\to 1^+$ and $r\to\infty$, we can reach any $\theta\in(0,1-\frac1p)$.
\end{proof}

\section{Oscillations vs. Sobolev norms}\label{sec:Osc}

In this section, we compare the asymptotics of the discrete oscillation norms of Section \ref{sec:implicit} with their continuous counterparts, for which we can quote a relation to Sobolev norms from \cite{HK:W1p}. This will then allow us to complete the proof of case \eqref{it:main-reg} of Theorem \ref{thm:main}.

\begin{proposition}\label{prop:cont-discrete-osc}
Let $(X,\rho,\nu)$ be Ahlfors $d$-regular and
\begin{equation*}
  \ud\nu_d(x,t):=\ud\nu(x)\frac{\ud t}{t^{d+1}}.
\end{equation*}
Then, for all $b\in L^1_{\loc}(\nu)$,
we have
\begin{equation}\label{eq:disc-cont-bound}
  \Norm{\{\operatorname{osc}_{1,\nu}(b,B_Q)\}_{Q\in\mathscr D}}{\ell^{d,\infty}(\mathscr D)}
  \sim \Norm{\operatorname{osc}_{1,\nu}(b,B(\cdot,\cdot))}{L^{d,\infty}(\nu_d)},
\end{equation}
and
\begin{equation}\label{eq:disc-cont-asymp}
\begin{split}
  \liminf_{\lambda\to 0}\lambda &\cdot(\#\{Q\in\mathscr D 
  :\operatorname{osc}_{1,\nu}(b,B_Q)>\lambda\})^{\frac1d} \\
  &\sim
  \liminf_{\lambda\to 0}\lambda\cdot\nu_d(\{(x,t)\in X\times(0,\infty):
  \operatorname{osc}_{1,\nu}(b,B(x,t))>\lambda\})^{\frac1d}.
\end{split}
\end{equation}
A bound like \eqref{eq:disc-cont-asymp} also holds with both $\liminf$'s replaced by $\limsup$'s.
\end{proposition}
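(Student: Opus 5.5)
The plan is to compare the discrete level-set counts with the continuous $\nu_d$-measures through a two-sided pointwise domination. Each dyadic cube $Q$ is assigned a region $W_Q\subset X\times(0,\infty)$, and we show that the oscillation over $B_Q$ is comparable, up to enlargements of balls, with oscillations over $B(x,t)$ for $(x,t)\in W_Q$. Here $\mathscr D$ is a dyadic system with cubes $Q$ of generation $k$, side $\delta^k$, centre $z_Q$ and $B_Q=B(z_Q,c\delta^k)$.

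Set
\begin{equation*}
  W_Q:=\{(x,t): x\in Q,\ t\in[\delta^{k},\delta^{k-1})\}.
\end{equation*}
These sets partition $X\times(0,\infty)$. By Ahlfors regularity,
\begin{equation*}
  \nu_d(W_Q)\sim\nu(Q)\,\delta^{-kd}\sim 1,
\end{equation*}
uniformly in $Q$. This makes the counting measure on $\mathscr D$ comparable to $\nu_d$ on unions of the $W_Q$, which is exactly why the power $d$ appears in the statement.

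\emph{Upper domination of the continuous side.} For $(x,t)\in W_Q$ we have $B(x,t)\subset B_Q$, provided $c$ is large. Since $\nu(B(x,t))\sim\nu(B_Q)$, this gives
\begin{equation*}
  \operatorname{osc}_{1,\nu}(b,B(x,t))\le C_0\operatorname{osc}_{1,\nu}(b,B_Q).
\end{equation*}
Hence
\begin{equation*}
  \nu_d(\operatorname{osc}>\lambda)\lesssim\#\{Q:\operatorname{osc}(b,B_Q)>\lambda/C_0\}.
\end{equation*}

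\emph{Reverse domination.} Conversely, $B_Q\subset B(x,t')$ for every $x\in Q$ and every $t'\in[C_1\delta^{k},C_1\delta^{k-1})$, with $C_1$ depending on $c$. The oscillations are again comparable, so
\begin{equation*}
  \operatorname{osc}(b,B_Q)\le C_0\operatorname{osc}(b,B(x,t'))\quad\text{on } W_Q',
\end{equation*}
where $W_Q'$ is the dilated region, still of $\nu_d$-measure $\sim 1$. The dilated regions overlap only boundedly, since $t'$ determines $k$ up to boundedly many values and $x$ determines $Q$ within each generation. Therefore
\begin{equation*}
  \#\{Q:\operatorname{osc}(b,B_Q)>\lambda\}\lesssim\nu_d(\operatorname{osc}>\lambda/C_0).
\end{equation*}

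\emph{Conclusion.} Write $D(\lambda)$ for the discrete count and $C(\lambda)$ for the continuous measure. The two dominations combine to
\begin{equation*}
  C(C_0\lambda)\lesssim D(\lambda)\lesssim C(\lambda/C_0).
\end{equation*}
Multiply by $\lambda^d$ and take the $d$-th root. Taking the supremum gives \eqref{eq:disc-cont-bound}. For the asymptotic statement, the rescaling $\lambda\mapsto C_0\lambda$ leaves $\liminf_{\lambda\to0}$ and $\limsup_{\lambda\to 0}$ invariant up to the factor $C_0$, exactly as in \eqref{eq:lim-term}. This gives \eqref{eq:disc-cont-asymp} and its $\limsup$ version.

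The main obstacle I expect is the reverse domination. It needs the geometric inclusions $B_Q\subset B(x,t')$ with $\nu_d(W_Q')\gtrsim1$, together with the bounded overlap of the $W_Q'$. Both rely on doubling to compare oscillations of nested balls of comparable radius, and on the separation properties of the dyadic centres. I would verify these using the standard dyadic cube properties from \cite{HK:12} and the counting bound $N\lesssim(R/r)^d$ from Section~\ref{sec:prelim}.
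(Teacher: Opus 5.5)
Your proposal is correct and is essentially the paper's argument. The paper takes the two level-set comparisons $\#\{Q\in\mathscr D:\operatorname{osc}_\nu(b,B_Q)>\kappa\}\lesssim\nu_d(\{(x,t):\operatorname{osc}_\nu(b,B(x,t))>c\kappa\})$ and its converse ready-made from the proof of \cite[Proposition 12.1]{Hyt:Sp}, then rescales $\lambda$ inside the $\sup$, $\liminf$ and $\limsup$ just as you do; you instead derive those comparisons yourself from the Whitney-type regions $W_Q$, and the step you flag as the main obstacle is easy, since $W_Q'$ is the image of $W_Q$ under $(x,t)\mapsto(x,C_1t)$ and so the $W_Q'$ are pairwise disjoint.
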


\begin{proof}
The norm bound \eqref{eq:disc-cont-bound} is simply a restatement of \cite[Proposition 12.1]{Hyt:Sp}. The asymptotic bound \eqref{eq:disc-cont-asymp} is also essentially contained in the proof of \cite[Proposition 12.1]{Hyt:Sp}, as we will next indicate.

By the first displays in \cite[pages 55 and 56]{Hyt:Sp}, respectively, we have
\begin{equation*}
   \#\{Q\in\mathscr D:\operatorname{osc}_{\nu}(b,B_Q)>\kappa\}
   \lesssim\nu_d(\{(x,t):\operatorname{osc}_{\nu}(b,B(x,t))>c\kappa\}).
\end{equation*}
and
\begin{equation*}
  \nu_d(\{(x,t):\operatorname{osc}_{\nu}(b,B(x,t))>\kappa\})
   \lesssim\#\{Q\in\mathscr D:\operatorname{osc}_{\nu}(b,B_Q)>c\kappa\}.
\end{equation*}
Taking $d$th roots, multiplying by $\lambda$, and taking $\liminf_{\lambda\to 0}$, it follows that
\begin{equation*}
\begin{split}
  \liminf_{\lambda\to 0}\lambda &(\#\{Q\in\mathscr D:\operatorname{osc}_{\nu}(b,B_Q)>\lambda\})^{\frac1d} \\
  &\sim
  \liminf_{\lambda\to 0}\lambda\cdot\nu_d(\{(x,t):\operatorname{osc}_{\nu}(b,B(x,t))>\lambda\})^{\frac1d}.
\end{split}
\end{equation*}
The proof of the limsup version is exactly the same, simply taking $\limsup_{\lambda\to 0}$ instead of $\liminf_{\lambda\to 0}$ in the last step.
\end{proof}

\begin{theorem}[\cite{HK:W1p}]\label{thm:HK-main}
Let $1\leq q<p<\infty$ and suppose that $(X,\rho,\nu)$ satisfies the $(1,q)$-Poincar\'e inequality. Let $f\in L^1_{\loc}(\nu)$. Then
\begin{enumerate}[\rm(1)]
  \item\label{it:M1p-weakLp} $f\in\dot M^{1,p}(\nu)$ if and only if $\operatorname{osc}_{1,\nu}(f,B(\cdot,\cdot))\in L^{p,\infty}(\nu_p)$, and
\begin{equation}\label{eq:HK-bound}
  \Norm{f}{\dot M^{1,p}(\nu)}
  \sim\Norm{\operatorname{osc}_{\nu}(f,B(\cdot,\cdot))}{L^{p,\infty}(\nu_p)}.
\end{equation}
  \item\label{it:HK-asymp} Under the equivalent conditions of part \eqref{it:M1p-weakLp}, we also have
\begin{equation}\label{eq:HK-asymp}
    \Norm{f}{\dot M^{1,p}(\nu)}
    \sim\liminf_{\lambda\to 0}\lambda\cdot\nu_p(\{\operatorname{osc}_{\nu}(f,B(\cdot,\cdot))>\lambda\})^{\frac 1p}.
\end{equation}
\end{enumerate}
\end{theorem}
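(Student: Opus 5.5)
The bound ``$\gtrsim$'' in \eqref{eq:HK-asymp} is free: the $\liminf$ is at most the supremum defining the $L^{p,\infty}(\nu_p)$ quasi-norm, and part \eqref{it:M1p-weakLp} bounds that by $\Norm{f}{\dot M^{1,p}(\nu)}$. Part \eqref{it:M1p-weakLp} itself I would take from \cite{HK:W1p}, since it is a weak-type analogue of classical characterisations. So the plan concerns only the lower bound
\begin{equation*}
  \Norm{f}{\dot M^{1,p}(\nu)}\lesssim\liminf_{\lambda\to 0}\lambda\cdot\nu_p(\{\operatorname{osc}_{\nu}(f,B(\cdot,\cdot))>\lambda\})^{\frac 1p},
\end{equation*}
in the spirit of Brezis--Van Schaftingen--Yung \cite{BVY} and Frank \cite{Frank}. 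The idea is to replace the Haj\l{}asz norm by the $L^p$ norm of a pointwise infinitesimal quantity, which can be read off from small-scale oscillations alone.

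\textbf{Step 1 (reduction to an upper gradient).} By part \eqref{it:M1p-weakLp}, $f\in\dot M^{1,p}(\nu)$, so $f$ has a minimal weak upper gradient $g_f$. Under the $q$-Poincar\'e inequality with $q<p$, the function $C\,(M(g_f^q))^{1/q}$ is a Haj\l{}asz upper gradient of $f$, by the usual telescoping argument. The Hardy--Littlewood maximal operator is bounded on $L^{p/q}(\nu)$ because $p/q>1$. Hence
\begin{equation*}
  \Norm{f}{\dot M^{1,p}(\nu)}\lesssim\Norm{g_f}{L^p(\nu)},
\end{equation*}
and it suffices to bound $\Norm{g_f}{L^p(\nu)}$ by the $\liminf$.

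\textbf{Step 2 (pointwise infinitesimal lower bound).} I would show that for $\nu$-a.e.\ $x$,
\begin{equation*}
  \liminf_{t\to 0}\frac{\operatorname{osc}_\nu(f,B(x,t))}{t}\geq c\,g_f(x).
\end{equation*}
For Lipschitz $f$, this follows from Cheeger's theorem $\operatorname{Lip}f=\operatorname{lip}f=g_f$ a.e.\ under doubling and Poincar\'e. One needs an averaged version: a point $y\in B(x,t)$ with $|f(y)-f(x)|\gtrsim t\operatorname{lip}f(x)$ forces a positive proportion of $B(x,t)$ to deviate from any constant by $\gtrsim t\,g_f(x)$. I would obtain this via a blow-up/tangent-function argument, or via the Poincar\'e-based fact that the average slope controls $g_f$ at Lebesgue points.

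For general $f$, I would use the Lusin-type approximation valid under Poincar\'e: $f$ agrees with a Lipschitz $f_k$ outside a set of measure $<1/k$, and $g_f=g_{f_k}$ a.e.\ on the coincidence set. At density points of the coincidence set, the oscillations of $f$ and $f_k$ over small balls differ by $o(t)$. That last claim needs care; I would use the $L^{p}$-integrability of $M(g_f^q)^{1/q}$ to control the contribution of the exceptional set. \textbf{This step is the main obstacle.}

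\textbf{Step 3 (Fatou-type integration).} Fix $c'<c$. For a.e.\ $x$ there is $t_x>0$ with $\operatorname{osc}_\nu(f,B(x,t))>c'tg_f(x)$ for all $t<t_x$. Let $E_\delta:=\{x: t_x>\delta\}$; then $E_\delta\uparrow X$ up to a null set as $\delta\to 0$. For $\lambda$ small, the level set $\{\operatorname{osc}>\lambda\}$ contains $\{(x,t): x\in E_\delta,\ \lambda/(c'g_f(x))<t<\delta\}$. Therefore
\begin{equation*}
  \lambda^p\nu_p(\{\operatorname{osc}>\lambda\})\geq\frac1p\int_{E_\delta}\Big((c'g_f)^p-\lambda^p\delta^{-p}\Big)_+\ud\nu .
\end{equation*}
Letting $\lambda\to 0$ (monotone convergence) and then $\delta\to 0$ gives
\begin{equation*}
  \liminf_{\lambda\to0}\lambda^p\nu_p(\{\operatorname{osc}_\nu(f,B(\cdot,\cdot))>\lambda\})\geq\frac{c'^p}{p}\Norm{g_f}{L^p(\nu)}^p.
\end{equation*}
Combined with Step 1, this is the required lower bound.
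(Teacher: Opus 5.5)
\textbf{Gap in Step 2.} The paper does not prove this statement. It quotes it from \cite{HK:W1p}: Theorem 1.1 there, with Remark 2.9 and \cite{KZ:08} used to replace completeness by the $(1,q)$-Poincar\'e inequality. You also quote part (1), and then try to prove part (2) yourself. The free direction and the Fatou-type Step 3 are fine, and Step 1 is standard (at least in complete spaces). But all the content of part (2) sits in Step 2, and there the decisive estimate is missing.

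Cheeger's identity $\operatorname{lip}f=g_f$ only concerns increments from the centre. It yields, for each small $t$, one point $y_0\in B(x,t/2)$ with $\abs{f(y_0)-f(x)}\gtrsim t\,g_f(x)$. To turn this into $\operatorname{osc}_\nu(f,B(x,t))\gtrsim t\,g_f(x)$, you must show that $f$ stays away from $f(x)$ on a set of measure $\gtrsim\nu(B(x,t))$ near $y_0$. The only modulus of continuity available there is the global Lipschitz constant $\Lambda$ of $f$, which can be much larger than $g_f(x)$. Nothing in your outline controls $\abs{f(y)-f(z)}$ for $y,z\neq x$ at the scale $t\,g_f(x)$. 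The blow-up route hides the same difficulty: the tangent function is a priori only $\Lambda$-Lipschitz unless you import Cheeger's structure theory of blow-ups, which is developed for complete spaces. Nor is there a ``Poincar\'e-based fact'' giving average slopes a lower bound, since the Poincar\'e inequality only bounds oscillations from above.

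\textbf{The missing ingredient.} What is needed is an approximate Lipschitz estimate at scale $t$. Let $u$ be $\Lambda$-Lipschitz, $g:=\operatorname{lip}u$, $x$ a Lebesgue point of $g^q$ with $L:=g(x)>0$, and $\eps>0$. Let $\lambda$ be the dilation constant of the Poincar\'e inequality. For $y\in B(x,t)$ and $\eps t\leq r\leq 4t$ only boundedly many scales are involved, so doubling turns the Lebesgue point property at $x$ into $\fint_{B(y,\lambda r)}g^q\ud\nu\leq L^q+o(1)$, uniformly as $t\to 0$. Telescoping the Poincar\'e inequality over these balls, and using $\Lambda$ only below scale $\eps t$, gives $\abs{u(y)-u(z)}\leq CL\rho(y,z)+(CL+2\Lambda)\eps t+o(t)$ for all $y,z\in B(x,t)$. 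Choose $\eps\ll L/\Lambda$ and a small $\kappa$ depending only on $C$. Then $\abs{u-u(x)}\geq Lt/8$ on $B(y_0,\kappa t)$ and $\abs{u-u(x)}\leq Lt/32$ on $B(x,\kappa t)$. Testing any constant against these two balls gives $\operatorname{osc}_\nu(u,B(x,t))\gtrsim Lt$.

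\textbf{Passing to general $f$.} Since the paper's Poincar\'e inequality is formulated with $\operatorname{lip}$, you can bypass Cheeger's theorem, weak upper gradients and completeness altogether. Take a Haj\l{}asz gradient $h$ of $f$, let $f_k$ be a $2k$-Lipschitz extension of $f|_{\{h\leq k\}}$, and set $G:=\operatorname{lip}f_k$ on $\{h\leq k\}$; this is consistent a.e.\ by a density-point argument. Then $\Norm{f}{\dot M^{1,p}(\nu)}\leq\liminf_k\Norm{f_k}{\dot M^{1,p}(\nu)}\lesssim\Norm{G}{L^p(\nu)}$. This uses lower semicontinuity, the Poincar\'e--maximal function argument, and $(2k)^p\nu(\{h>k\})\to 0$.

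The transfer you flagged as the main obstacle is in fact the easy part. At a density point $x$ of $\{h\leq k\}$, every $y\in B(x,t)$ has some $z\in\{h\leq k\}$ with $\rho(y,z)=o(t)$ uniformly in $y$. Hence $\abs{f(y)-f_k(y)}\leq\rho(y,z)(h(y)+3k)$, and $t^{-1}\fint_{B(x,t)}\abs{f-f_k}\ud\nu\leq o(1)\,(Mh(x)+3k)\to 0$, where $M$ is the Hardy--Littlewood maximal operator of $\nu$. Step 3 then runs with $G$ in place of $g_f$.
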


\begin{proof}
This is a version of \cite[Theorem 1.1]{HK:W1p} noted in \cite[Remark 2.9]{HK:W1p}: the theorem is stated assuming that $(X,\rho,\nu)$ is a complete metric space with $(1,p)$-Poincar\'e inequality, which implies $(1,q)$-Poincar\'e inequality for some $q\in[1,p)$ by \cite{KZ:08}. In the said remark, it is noted that completeness is only used via this implication, and this assumption could be replaced by assuming a $(1,q)$-Poincar\'e inequality.
\end{proof}

\begin{corollary}\label{cor:Sob<comm}
Let $\mu,\nu$ be two doubling measures on a metric space $(X,\rho)$ that satisfy the $A_\infty$ relation \eqref{eq:Ainfty}.
Let $(X,\rho,\nu)$ be Ahlfors $d$-regular for some $d\in(1,\infty)$ and satisfy the $q$-Poincar\'e inequality for some $q\in[1,d)$.
Let $b\in L^1_{\loc}(\mu)$ and $T$ be a non-degenerate $\mu$-singular integral.
If $[b,T]\in S^{d,\infty}(L^2(\mu))$, then
\begin{equation}\label{eq:Sob<comm-bound}
    \Norm{ b }{ \dot M^{1,d}(\nu)}
    \lesssim\Norm{[b,T]}{S^{d,\infty}(L^2(\mu))}.
\end{equation}
In this case, there is $\theta\in(0,1)$ depending only on $d$ such that
\begin{equation}\label{eq:Sob<comm-asymp}
    \Norm{ b }{ \dot M^{1,d}(\nu)}
    \lesssim
    \Big(\liminf_{n\to\infty}a_n([b,T])\cdot n^{\frac1d}\Big)^\theta
    \Norm{[b,T]}{S^{d,\infty}(L^2(\mu))}^{1-\theta}
\end{equation}
In fact, this holds for every $\theta\in(0,1-\frac1d)$.
\end{corollary}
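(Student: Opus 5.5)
The plan is to chain three earlier results: Theorem \ref{thm:HK-main}, Proposition \ref{prop:cont-discrete-osc}, and Corollary \ref{cor:osc<comm-asymp}, all with exponent $p=d$. Two preliminary points fix the setting. The $q$-Poincar\'e hypothesis with $q\in[1,d)$ is exactly what Theorem \ref{thm:HK-main} needs with $p=d$. Ahlfors $d$-regularity is what Proposition \ref{prop:cont-discrete-osc} needs to pass between discrete and continuous oscillations. One further check: the Corollary assumes only non-degeneracy, while Corollary \ref{cor:osc<comm-asymp} is stated for strongly non-degenerate kernels. I would read the hypothesis as in Theorem \ref{thm:main}, namely strongly non-degenerate; alternatively, under regularity, Lemma \ref{lem:reg=strong} supplies strong non-degeneracy.

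For the norm bound \eqref{eq:Sob<comm-bound}, write $\operatorname{osc}_\nu=\operatorname{osc}_{1,\nu}$. The chain is
\begin{equation*}
  \Norm{b}{\dot M^{1,d}(\nu)}
  \sim\Norm{\operatorname{osc}_{\nu}(b,B(\cdot,\cdot))}{L^{d,\infty}(\nu_d)}
  \sim\Norm{\{\operatorname{osc}_{\nu}(b,B_Q)\}_{Q\in\mathscr D}}{\ell^{d,\infty}(\mathscr D)}
  \lesssim\Norm{[b,T]}{S^{d,\infty}(L^2(\mu))}.
\end{equation*}
The three steps come from \eqref{eq:HK-bound}, \eqref{eq:disc-cont-bound}, and \eqref{eq:osc<comm-bound} with $p=d>1$ and $q=\infty$, respectively. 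The first equivalence must be read correctly. If $b\notin\dot M^{1,d}(\nu)$, part \eqref{it:M1p-weakLp} of Theorem \ref{thm:HK-main} says the weak-type oscillation norm is infinite. But the right-hand side of the chain is finite, so this case cannot occur. Hence $b\in\dot M^{1,d}(\nu)$, and the bound \eqref{eq:Sob<comm-bound} follows.

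For the asymptotic bound \eqref{eq:Sob<comm-asymp}, since $b\in\dot M^{1,d}(\nu)$, part \eqref{it:HK-asymp} of Theorem \ref{thm:HK-main} applies. It identifies $\Norm{b}{\dot M^{1,d}(\nu)}$, up to constants, with the continuous $\liminf_{\lambda\to0}$ quantity. Next, \eqref{eq:disc-cont-asymp} replaces that quantity with the discrete $\liminf$ over $\mathscr D$. Finally, \eqref{eq:osc<comm-asymp} with $p=d$ bounds the discrete $\liminf$ by
\begin{equation*}
  \Big(\liminf_{n\to\infty} a_n([b,T])\, n^{\frac1d}\Big)^\theta
  \Norm{[b,T]}{S^{d,\infty}(L^2(\mu))}^{1-\theta},
\end{equation*}
for any $\theta\in(0,1-\frac1d)$.

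There is no real obstacle, since each step is a quoted result. The only care needed is to check that the hypotheses match at $p=d$: the measures involved ($\nu$-oscillations throughout, with $\mu$ entering only through the commutator), the exponent ranges ($d>1$, $q<d$), and the finiteness required before Theorem \ref{thm:HK-main}\eqref{it:HK-asymp} can be invoked.
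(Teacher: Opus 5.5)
Your proposal is correct and follows the paper's proof essentially step by step. The norm bound uses the same chain \eqref{eq:HK-bound}, \eqref{eq:disc-cont-bound}, \eqref{eq:osc<comm-bound}, and the asymptotic bound uses part \eqref{it:HK-asymp} of Theorem \ref{thm:HK-main}, then \eqref{eq:disc-cont-asymp} and \eqref{eq:osc<comm-asymp}; your remark that Corollary \ref{cor:osc<comm-asymp} really needs strong non-degeneracy is a fair catch. The one hypothesis check you skip, which the paper makes explicit, is $b\in L^1_{\loc}(\nu)$: it is required by Proposition \ref{prop:cont-discrete-osc} and Theorem \ref{thm:HK-main}, but only $b\in L^1_{\loc}(\mu)$ is assumed, so you should apply \eqref{eq:osc<comm-bound} first, since it makes every $\operatorname{osc}_\nu(b,B_Q)$ finite and hence $b\in L^1_{\loc}(\nu)$.
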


\begin{proof}
First, by \eqref{eq:osc<comm-bound}, we have
\begin{equation}\label{eq:by-osc<comm}
  \Norm{\{\operatorname{osc}_\nu(f,B_Q)\}_{Q\in\mathscr D}}{\ell^{d,\infty}(\mathscr D)} 
  \lesssim \Norm{[b,T]}{S^{d,\infty}(L^2(\mu))}.
\end{equation}
The finiteness of each $\operatorname{osc}_\nu(f,B_Q)$ shows in particular that $b\in L^1_{\loc}(\nu)$. This gives us access to Proposition \ref{prop:cont-discrete-osc} and part \eqref{it:M1p-weakLp} of Theorem \ref{thm:HK-main}, and these show that
\begin{equation*}
\begin{split}
  \Norm{b}{\dot M^{1,d}(\nu)}
  &\sim \Norm{\operatorname{osc}_\nu(f,B(\cdot,\cdot))}{L^{d,\infty}(\nu_d)}
  \qquad\text{by \eqref{eq:HK-bound}} \\
  &\sim \Norm{\{\operatorname{osc}_\nu(f,B_Q)\}_{Q\in\mathscr D}}{\ell^{d,\infty}(\mathscr D)} 
  \qquad\text{by \eqref{eq:disc-cont-bound}} \\
  &  \lesssim \Norm{[b,T]}{S^{d,\infty}(L^2(\mu))}
  \qquad\text{by \eqref{eq:by-osc<comm}},
\end{split}
\end{equation*}
which proves \eqref{eq:Sob<comm-bound}.

In particular, $b\in\dot M^{1,d}(\nu)$. Under this condition, we also have access to part \eqref{it:HK-asymp} of Theorem \ref{thm:HK-main}. Thus
\begin{equation*}
\begin{split}
  \Norm{b}{\dot M^{1,d}(\nu)}
  &\sim \liminf_{\lambda\to 0}\lambda\cdot\nu_d(\{\operatorname{osc}_{\nu}(f,B(\cdot,\cdot))>\lambda\})^{\frac 1d}
  \qquad\text{by \eqref{eq:HK-asymp}} \\
  &\sim \liminf_{\lambda\to 0}\lambda\cdot(\#\{Q\in\mathscr D:
     \operatorname{osc}_{\nu}(f,B_Q)>\lambda\})^{\frac 1d}
   \qquad\text{by \eqref{eq:disc-cont-asymp}} \\
  &\lesssim
  \Big(\liminf_{n\to\infty} a_n([b,T])\cdot n^{\frac1p}\Big)^\theta
  \Norm{[b,T]}{S^{p,\infty}(L^2(\mu))}^{1-\theta}
  \qquad\text{by \eqref{eq:osc<comm-asymp}},
\end{split}
\end{equation*}
for any $\theta\in(0,1-\frac1d)$ in the last step.
\end{proof}

The following corollary is a restatement of case \eqref{it:main-reg} of Theorem \ref{thm:main}; we now complete its proof.

\begin{corollary}\label{cor:Sob=comm}
Let $\mu,\nu$ be two doubling measures on a metric space $(X,\rho)$ that satisfy the $A_\infty$ relation \eqref{eq:Ainfty}.
Let $(X,\rho,\nu)$ be Ahlfors $d$-regular for some $d\in(1,\infty)$ and satisfy the $q$-Poincar\'e inequality for some $q\in[1,d)$.
Let $b\in L^1_{\loc}(\mu)$ and $T$ be a non-degenerate H\"older $\eta$-regular $\mu$-singular integral with $\eta>(1-\frac{d}{2})_+$.
Then $[b,T]\in S^{d,\infty}(L^2(\mu))$ if and only if $b\in\dot M^{1,d}(\nu)$, and in this case
\begin{subequations}\label{eq:new-regular}
\begin{align}
    \Norm{ b }{ \dot M^{1,d}(\nu)}
    &\sim\Norm{[b,T]}{S^{d,\infty}(L^2(\mu))},
    \label{eq:new-bound} \\
    &\sim\liminf_{n\to\infty}a_n([b,T])\cdot n^{\frac1d}
      \sim\limsup_{n\to\infty}a_n([b,T])\cdot n^{\frac1d}.
    \label{eq:new-asymp}
\end{align}
\end{subequations}
\end{corollary}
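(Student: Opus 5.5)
The plan is to combine the upper bound from the literature with Corollary \ref{cor:Sob<comm} and then eliminate the parameter $\theta$ by rearranging, following the bootstrap template \eqref{eq:intro-bootstrap}.

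First, by Lemma \ref{lem:reg=strong}, the H\"older $\eta$-regular kernel with $\eta>0$ satisfies $\omega(t)=t^\eta\to0$, so non-degeneracy upgrades to strong non-degeneracy. This lets us use Corollaries \ref{cor:osc-nu<Phi} and \ref{cor:Sob<comm}. (When $d\geq 2$, the condition $\eta>(1-\frac d2)_+$ allows $\eta=0$ formally, but H\"older regularity with $\eta>0$ is implicitly assumed.)

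\emph{Direction ``$\Leftarrow$''.} Suppose $b\in\dot M^{1,d}(\nu)$. Then \cite[Theorem 1.4]{Hyt:Osc} gives
\begin{equation*}
  \Norm{[b,T]}{S^{d,\infty}(L^2(\mu))}\lesssim\Norm{b}{\dot M^{1,d}(\nu)}.
\end{equation*}
This requires exactly the hypotheses of case \eqref{it:main-reg}: the $A_\infty$ relation, Ahlfors regularity, and $\eta>(1-\frac d2)_+$.

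\emph{Direction ``$\Rightarrow$''.} Suppose $[b,T]\in S^{d,\infty}$. Then \eqref{eq:Sob<comm-bound} gives $b\in\dot M^{1,d}(\nu)$ together with the reverse inequality. This proves \eqref{eq:new-bound}.

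For \eqref{eq:new-asymp}, the trivial chain
\begin{equation*}
  \liminf_{n\to\infty}a_n([b,T])\,n^{\frac1d}\leq\limsup_{n\to\infty}a_n([b,T])\,n^{\frac1d}\leq\Norm{[b,T]}{S^{d,\infty}(L^2(\mu))}\sim\Norm{b}{\dot M^{1,d}(\nu)}
\end{equation*}
leaves only the lower bound
\begin{equation*}
  \Norm{b}{\dot M^{1,d}(\nu)}\lesssim\liminf_{n\to\infty} a_n([b,T])\,n^{\frac1d}.
\end{equation*}
To get it, fix some $\theta\in(0,1-\frac1d)$ and apply \eqref{eq:Sob<comm-asymp}. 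Then bound the factor $\Norm{[b,T]}{S^{d,\infty}}^{1-\theta}$ by $\Norm{b}{\dot M^{1,d}(\nu)}^{1-\theta}$ using \eqref{eq:new-bound}. This yields
\begin{equation*}
  \Norm{b}{\dot M^{1,d}(\nu)}\lesssim\Big(\liminf_{n\to\infty} a_n([b,T])\,n^{\frac1d}\Big)^\theta\Norm{b}{\dot M^{1,d}(\nu)}^{1-\theta}.
\end{equation*}

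The only delicate point is the rearrangement. If $\Norm{b}{\dot M^{1,d}(\nu)}=0$, the claim is trivial. Otherwise the norm is finite (since $b\in\dot M^{1,d}(\nu)$) and positive, so we may divide by $\Norm{b}{\dot M^{1,d}(\nu)}^{1-\theta}$ to get
\begin{equation*}
  \Norm{b}{\dot M^{1,d}(\nu)}^{\theta}\lesssim\Big(\liminf_{n\to\infty} a_n([b,T])\,n^{\frac1d}\Big)^\theta.
\end{equation*}
Taking $\theta$-th roots, with $\theta$ fixed depending only on $d$, finishes the proof. I expect no step to be a genuine obstacle, since all the heavy lifting (asymptotic interpolation and the oscillation characterisations) is already contained in Corollary \ref{cor:Sob<comm}.
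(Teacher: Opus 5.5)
Your proposal is correct and follows essentially the same route as the paper. You quote the upper bound \eqref{eq:reg-upper-bound} from \cite[Theorem 1.4]{Hyt:Osc}, combine it with \eqref{eq:Sob<comm-bound} and \eqref{eq:Sob<comm-asymp}, and divide by $\Norm{b}{\dot M^{1,d}(\nu)}^{1-\theta}$ when this is nonzero. Your explicit appeal to Lemma \ref{lem:reg=strong} fills a step the paper leaves implicit. Your parenthetical about $\eta=0$ is unnecessary, since $\eta>(1-\frac d2)_+\geq 0$ already forces $\eta>0$.
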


\begin{proof}
The norm equivalence \eqref{eq:new-bound} is a restatement of \cite[Theorem 1.4(3)]{Hyt:Osc}. In fact, we already had the lower bound in \eqref{eq:Sob<comm-bound}, so we would only need to quote the upper bound
\begin{equation}\label{eq:reg-upper-bound}
  \Norm{[b,T]}{S^{d,\infty}(L^2(\mu))}\lesssim\Norm{ b }{ \dot M^{1,d}(\nu)},
\end{equation}
which is where the H\"older $\eta$-regularity assumption arises, due to its presence in \cite[Theorem 1.4]{Hyt:Osc}. Suppose then that that the two equivalent sides of \eqref{eq:new-bound} are finite. Then
\begin{equation*}
\begin{split}
   \Norm{b}{\dot M^{1,d}(\nu)}
   &\lesssim
   \Big(\liminf_{n\to\infty} a_n([b,T])\cdot n^{\frac1p}\Big)^\theta
  \Norm{[b,T]}{S^{p,\infty}(L^2(\mu))}^{1-\theta}\qquad\text{by \eqref{eq:Sob<comm-asymp}} \\
   &\lesssim
    \Big(\liminf_{n\to\infty} a_n([b,T])\cdot n^{\frac1d}\Big)^\theta
  \Norm{b}{\dot M^{1,p}(\nu)}^{1-\theta}\qquad\text{by \eqref{eq:reg-upper-bound}}.
\end{split}
\end{equation*}
If $\Norm{b}{\dot M^{1,d}(\nu)}>0$, this immediately gives
\begin{equation}\label{eq:Sob<liminf}
  \Norm{b}{\dot M^{1,d}(\nu)}
  \lesssim\liminf_{n\to\infty} a_n([b,T])\cdot n^{\frac1d},
\end{equation}
and the same conclusion is trivial if $\Norm{b}{\dot M^{1,d}(\nu)}=0$. Combining everything, we have
\begin{equation*}
\begin{split}
    \Norm{b}{\dot M^{1,d}(\nu)}
  &\lesssim\liminf_{n\to\infty} a_n([b,T])\cdot n^{\frac1d}\qquad\text{by \eqref{eq:Sob<liminf}} \\
  &\leq\limsup_{n\to\infty} a_n([b,T])\cdot n^{\frac1d}
  \leq\Norm{[b,T]}{S^{d,\infty}(L^2(\mu))}\qquad\text{trivially} \\
  &\phantom{\leq\limsup_{n\to\infty} a_n([b,T])\cdot n^{\frac1d}}\
  \lesssim\Norm{b}{\dot M^{1,d}(\nu)}\qquad\text{by \eqref{eq:reg-upper-bound}}.
\end{split}
\end{equation*}
Thus all these quantities are equivalent, completing the proof of \eqref{eq:new-regular}.
\end{proof}

\section{The commutator upper bound revisited}\label{sec:upper}

In this section, we provide the proof of case \eqref{it:main-new} of Theorem \ref{thm:main}, but let us first discuss some background for this.

In the proof of case \eqref{it:main-reg} of Theorem \ref{thm:main}, restated as Corollary \ref{cor:Sob=comm}, a key ingredient borrowed from elsewhere was the commutator upper bound \eqref{eq:reg-upper-bound}. We quoted this from \cite[Theorem 1.4]{Hyt:Osc}, which in turn builds on \cite[Theorem 1.1]{Hyt:Sp}. For more specific situations in place of the abstract spaces $X$ and operators $T$ considered in \cite{Hyt:Osc,Hyt:Sp}, several different proofs of \eqref{eq:reg-upper-bound} are available, none of them very easy.

For highly regular (namely, $\abs{\partial_x^\alpha\partial_y^\beta K(x,y)}\lesssim\abs{x-y}^{-d-\abs{\alpha}-\abs{\beta}}$ for all multi-indices $\alpha,\beta$ up to a high degree) singular integrals $T$ on $\R^d$, it was shown in \cite{RS:end,RS:NWO} that
\begin{equation}\label{eq:RS-upper}
  \Norm{[b,T]}{S^{d,\infty}(L^2(\R^d))}
  \lesssim\Norm{\{\operatorname{osc}(b,B_Q)\}_{Q\in\R^d}}{\ell^{d,\infty}(\mathscr D)}
  \lesssim\Norm{b}{\dot W^{1,d}(\R^d)};
\end{equation}
the first estimate in \eqref{eq:RS-upper} is contained in \cite[Corollary 2.9]{RS:NWO} and the second one in \cite[Theorem 2.2]{RS:end}. The proof of the first step of \eqref{eq:RS-upper} in \cite{RS:NWO} uses local Fourier series expansions of the highly regular kernels to decompose them in terms of so-called {\em nearly weakly orthogonal} (NWO) sequences---the key tool of \cite{RS:NWO}. These considerations have been extended to  highly regular singular kernels on Carnot groups in \cite{LXY}, using so-called Alpert bases \cite{Alpert} as a substitute for Fourier series. Relaxing the high regularity assumptions of both \cite{LXY,RS:NWO}, an extension of \eqref{eq:RS-upper} for standard H\"older $\eta$-regular singular integrals on $\R^d$ was found in \cite[Theorem 1.4]{WZ:median}, replacing the NWO decomposition by the dyadic representation formula of \cite{Hyt:A2}. 
An extension of this representation to abstract metric measure spaces is also behind the results of \cite{Hyt:Osc,Hyt:Sp} that we quoted for \eqref{eq:reg-upper-bound}.

For the important special case of the classical Riesz transforms on $\R^d$ and their analogues on the Heisenberg group and the Bessel setting, completely different proofs of \eqref{eq:reg-upper-bound}---by-passing the intermediate step in \eqref{eq:RS-upper} and replacing the related NWO/dyadic decompositions by various methods of noncommutative analysis---are due to \cite{LMSZ} and \cite{FLMSZ,FLSZ}, respectively.

Our goal in this section is to show that significant cases of \eqref{eq:RS-upper} may also be obtained by an arguable more elementary argument, building on the techniques of Janson--Wolff \cite{JW:82} that predate Rochberg--Semmes \cite{RS:NWO} by several years. While the method of \cite{JW:82} has been adapted for commutator Schatten class estimates with non-critical parameters in a variety of settings (cf.\ \cite[Section 4.2]{FLL:23}, \cite[Section 2.2]{LXY}, \cite[Section 3.2]{FLLVW:Neu}), the possibility of extending this approach even to the critical case seems to have been overlooked before.

Nevertheless,  a new approach to the commutator upper bounds at the critical index can readily be executed as follows. We denote by
\begin{equation*}
  \mathcal I_K f(x):=\int_X K(x,y)f(y)\ud\mu(y)
\end{equation*}
the $\mu$-integral operator with kernel $K$, and by
\begin{equation*}
  V(x,y):=\mu(B(x,\rho(x,y)))
\end{equation*}
the $\mu$-measure of the ball at centre $x$ and radius $\rho(x,y)$.

\begin{proposition}
Let $(X,\rho,\mu)$ be a doubling metric measure space and $p\in(2,\infty)$. Then
\begin{subequations}
\begin{align}
  \Norm{\mathcal I_{V^{-1}F}}{ S^p(L^2(\mu))}  &\lesssim \Norm{F}{L^p(V^{-2})}, 
  \label{eq:JW-strong} \\
  \Norm{\mathcal I_{V^{-1}F}}{ S^{p,\infty}(L^2(\mu))}  &\lesssim \Norm{F}{L^{p,\infty}(V^{-2})}.
  \label{eq:JW-weak}
\end{align}
\end{subequations}
\end{proposition}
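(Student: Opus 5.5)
The plan is to prove \eqref{eq:JW-strong} by complex interpolation between $p=2$ and $p=\infty$, and then to obtain \eqref{eq:JW-weak} by real interpolation.

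For $p=2$, the Hilbert--Schmidt norm of $\mathcal I_{V^{-1}F}$ on $L^2(\mu)$ equals the $L^2(\mu\times\mu)$ norm of its kernel:
\begin{equation*}
  \Norm{\mathcal I_{V^{-1}F}}{S^2}^2=\iint \abs{F(x,y)}^2V(x,y)^{-2}\ud\mu(x)\ud\mu(y)=\Norm{F}{L^2(V^{-2})}^2,
\end{equation*}
so the bound holds with equality.

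For the endpoint $p=\infty$, I would show that $\abs{F}\leq 1$ implies $\Norm{\mathcal I_{V^{-1}F}}{\bddlin(L^2(\mu))}\lesssim 1$. This cannot hold as stated: the kernel $V^{-1}$ gives a log-divergent operator, since $\int V(x,y)^{-1}\ud\mu(y)=\infty$. So the honest $L^\infty$ endpoint fails, and this is the main obstacle. This is why $p>2$ is strict and why Janson--Wolff use a mixed-norm endpoint instead.

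The replacement I would use is the following. For $p>2$, pick $s\in(p,\infty)$ and set $w(x,y):=V(x,y)^{-1}$, so that the measure in the statement is $V^{-2}\ud\mu\ud\mu = w^2\ud\mu\ud\mu$. Consider the analytic family
\begin{equation*}
  F_z:=\sign(F)\,\abs{F}^{p(1-z)/2+pz/s}\,w^{\,\alpha(z)},
\end{equation*}
with the exponent $\alpha(z)$ linear in $z$ and vanishing where appropriate. At $\operatorname{Re}z=0$ we use the $S^2$ identity. At $\operatorname{Re}z=1$ we use an $S^s$ bound for kernels of the form $V^{-1}G$ with $G$ in a weighted $L^s$ space whose weight decays slightly, namely $V^{-2+\eps}$-type weights. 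For that endpoint the Schur test applies: by doubling, $\int_{\rho(x,y)>r}V(x,y)^{-1-\eps}\ud\mu(y)\lesssim \mu(B(x,r))^{-\eps}$ and $\int_{\rho(x,y)<r} V(x,y)^{-1+\eps}\ud\mu(y)\lesssim \mu(B(x,r))^{\eps}$, and these annular geometric sums are what yield the bounds.

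I would rather use the Russo / Goffeng-type criterion: an integral operator whose kernel lies in the mixed space $L^{s'}_y L^s_x\cap L^{s'}_xL^s_y$ (with $s>2$) belongs to $S^s$. Applying it with $\abs{K}=V^{-1}\abs{F}$, Hölder's inequality in the inner variable against $V^{-1}$ powers, combined with the doubling estimate $\int V(x,y)^{-\gamma}\ud\mu(y)\lesssim$ finite for $\gamma>1$ only at infinity, still leaves a logarithm near the diagonal. So I would instead split the kernel dyadically in scale, $F=\sum_k F1_{\{\rho\sim 2^k\}}$. For each piece $K_k$ the scale is fixed and the Schur test gives $\Norm{\mathcal I_{K_k}}{S^\infty}\lesssim\Norm{F_k}{\infty}$, while the $S^2$ bound is exact. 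Interpolating piecewise gives $\Norm{\mathcal I_{K_k}}{S^p}\lesssim\Norm{F_k}{L^p(V^{-2})}$. Summing over $k$ is the delicate step, and I expect it to require the almost-orthogonality of the pieces via a Cotlar-type argument on $S^p$ with $p>2$, that is $\Norm{\sum A_k}{S^p}^p$ controlled through $\Norm{(\sum A_k^*A_k)^{1/2}}{S^p}$ and $\Norm{(\sum A_kA_k^*)^{1/2}}{S^p}$ (noncommutative Khintchine / square function for $p\geq 2$). Those row and column square functions have kernels computable by $S^{p/2}$ bounds of positive operators, which are amenable to the same Schur / Hilbert--Schmidt interpolation at exponent $p/2>1$.

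Finally, \eqref{eq:JW-weak} follows from \eqref{eq:JW-strong} at two exponents $2<p_0<p<p_1$ by real interpolation, since $F\mapsto\mathcal I_{V^{-1}F}$ is linear, $(S^{p_0},S^{p_1})_{\vartheta,\infty}=S^{p,\infty}$, and $(L^{p_0}(V^{-2}),L^{p_1}(V^{-2}))_{\vartheta,\infty}=L^{p,\infty}(V^{-2})$.
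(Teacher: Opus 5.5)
There is a genuine gap: the strong estimate \eqref{eq:JW-strong} is never established, and all of the difficulty lies there. Your first two attempts are abandoned. The $\operatorname{Re}z=1$ endpoint you describe would itself be a weighted $S^s$ bound of the very type being proved, which a Schur test (an operator-norm bound) cannot deliver. On $\R^d$, a bound with weight $V^{-2+\eps}$ is in fact ruled out by the dilation invariance of Schatten norms.

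Your dyadic route is correct as far as it goes. With $F_k:=F1_{\{\rho(x,y)\sim 2^k\}}$ and $A_k:=\mathcal I_{V^{-1}F_k}$, the Schur test and the Hilbert--Schmidt identity do give $\Norm{A_k}{S^p}\lesssim\Norm{F_k}{L^p(V^{-2})}$ uniformly in $k$. What is needed, however, is
\[
\Norm{\textstyle\sum_k A_k}{S^p}\lesssim\Big(\textstyle\sum_k\Norm{F_k}{L^p(V^{-2})}^p\Big)^{1/p},
\]
whereas the triangle inequality only gives the $\ell^1$-sum over scales. This cross-scale summation is the entire content of the Janson--Wolff lemma, and you leave it open.

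The mechanism you propose cannot close this gap. Noncommutative Khintchine controls the randomised sums $\mathbb{E}\Norm{\sum_k\varepsilon_kA_k}{S^p}$, not $\sum_kA_k$ itself. Removing the signs would require an almost orthogonality that is simply absent. If $F\geq 0$, every $A_k$ has a nonnegative kernel. For instance, on $\R^d$ take $F=\sum_k a_k1_{\{\rho\sim 2^k\}}1_{B_0\times B_0}$ with $a_k\geq0$. Testing $\langle A_k1_{B_0},A_j1_{B_0}\rangle$ gives $\Norm{A_j^*A_k}{}\gtrsim a_ja_k$ for all scales well below the radius of $B_0$, with no decay in $\abs{j-k}$, so no Cotlar--Stein argument applies. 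Moreover, estimating the square function $\sum_kA_k^*A_k$ in $S^{p/2}$ with $p/2>1$ runs into the same cross-scale summation problem. Any valid proof must extract the summability from the size structure of the weight $V^{-2}$ across scales, not from cancellation.

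The paper does not reprove \eqref{eq:JW-strong}. It quotes it from \cite[Proposition 5.8]{Hyt:Sp}, the doubling-space version of the lemma in \cite{JW:82}. You should either cite that result or supply a genuine argument for the scale summation. Once \eqref{eq:JW-strong} is available at two exponents $2<p_0<p<p_1$, your final real-interpolation step for \eqref{eq:JW-weak} is exactly the paper's argument and is fine.
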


\begin{proof}
Estimate \eqref{eq:JW-strong} is \cite[Proposition 5.8]{Hyt:Sp}, elaborating on earlier work of \cite{JW:82}. Estimate \eqref{eq:JW-weak} follows by real interpolation: if $\frac{1}{p}=\frac{1-\theta}{p_0}+\frac{\theta}{p_1}$, then $(S^{p_0},S^{p_1})_{p,\infty}=S^{p,\infty}$ \cite[Th\'eor\`eme 2]{Merucci} and $(L^{p_0},L^{p_1})_{p,\infty}=L^{p,\infty}$ \cite[Theorem 5.2.1]{BL:book}.
\end{proof}

\begin{proposition}\label{prop:new-upper}
Let $d>2$, and let $(X,\rho,\mu)$ be  Ahflors $d$-regular. If $T$ is a $\mu$-singular integral operator, then
\begin{equation*}
   \Norm{[b,T]}{ S^{d,\infty}(L^2(\mu))}  \lesssim \Norm{b}{\dot M^{1,d}(\mu)}.
\end{equation*}
\end{proposition}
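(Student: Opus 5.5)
The plan is to write the commutator as an integral operator and apply the weak-type Janson--Wolff bound \eqref{eq:JW-weak} with $p=d>2$. The kernel of $[b,T]$ is $K(x,y)(b(y)-b(x))$, at least formally. Let $h$ be a Haj\l{}asz upper gradient of $b$ with $\Norm{h}{L^d(\mu)}\le 2\Norm{b}{\dot M^{1,d}(\mu)}$. Then
\begin{equation*}
  \abs{K(x,y)(b(x)-b(y))}\lesssim\frac{\rho(x,y)(h(x)+h(y))}{V(x,y)}.
\end{equation*}
We therefore set $F(x,y):=V(x,y)K(x,y)(b(y)-b(x))$, so that $[b,T]=\pm\mathcal I_{V^{-1}F}$ and $\abs{F(x,y)}\lesssim\rho(x,y)(h(x)+h(y))$.

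Since the kernel is not required to be regular, the identification of $[b,T]$ with this integral operator needs justification. I would first treat bounded $b$ with bounded support (or truncations of $b$), where the off-diagonal kernel representation is legitimate. I would then pass to the limit using lower semicontinuity of the $S^{d,\infty}$ quasinorm under weak operator convergence. This approximation step is routine but must be stated.

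By \eqref{eq:JW-weak} and the monotonicity of the $L^{d,\infty}(V^{-2})$ quasinorm under pointwise domination, it suffices to bound $G_1(x,y):=\rho(x,y)h(x)$ in $L^{d,\infty}(V^{-2}\,\mathrm d\mu\,\mathrm d\mu)$ by $\Norm{h}{L^d(\mu)}$. The symmetric term $\rho(x,y)h(y)$ is handled the same way, since Ahlfors regularity gives $V(x,y)\sim\rho(x,y)^d\sim V(y,x)$. In this setting the measure is $\mathrm d\sigma(x,y)\sim\rho(x,y)^{-2d}\,\mathrm d\mu(x)\,\mathrm d\mu(y)$.

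Fix $\lambda>0$ and $x$. The condition $\rho(x,y)h(x)>\lambda$ means $\rho(x,y)>\lambda/h(x)$. Because $2d>d$, the tail integral satisfies
\begin{equation*}
  \int_{\rho(x,y)>s}\rho(x,y)^{-2d}\,\mathrm d\mu(y)\sim s^{-d}.
\end{equation*}
This follows by dyadic annuli and Ahlfors regularity: each annulus contributes $(2^ks)^{d-2d}$, a convergent geometric sum. Hence
\begin{equation*}
  \sigma(G_1>\lambda)\lesssim\int_X\Big(\frac{h(x)}{\lambda}\Big)^d\mathrm d\mu(x)=\lambda^{-d}\Norm{h}{L^d(\mu)}^d,
\end{equation*}
which gives $\Norm{G_1}{L^{d,\infty}(\sigma)}\lesssim\Norm{h}{L^d(\mu)}$.

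For the sum of the two terms, the quasi-triangle inequality in $L^{d,\infty}$ suffices. Combined with \eqref{eq:JW-weak}, this yields $\Norm{[b,T]}{S^{d,\infty}}\lesssim\Norm{h}{L^d}$. Taking the infimum over $h$ then gives the claim.

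The only point needing care is that $F$ is genuinely in $L^{d,\infty}$ only in the weak sense. The strong $L^d(V^{-2})$ norm diverges logarithmically, since $\int\rho^{d-2d}$ is scale invariant. This is exactly why the weak-type version \eqref{eq:JW-weak} is needed, and why the restriction $p=d>2$ enters through the hypothesis of the Janson--Wolff proposition. The main obstacle I foresee is the justification of the kernel representation and approximation for merely $L^1_{\loc}$ functions $b$ in $\dot M^{1,d}$. I would handle it by truncation $b_N=\max(-N,\min(b,N))$, which preserves the upper gradient up to a constant, combined with the $S^{d,\infty}$ Fatou property.
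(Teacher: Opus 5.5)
Your proposal is correct and is essentially the paper's proof. You apply \eqref{eq:JW-weak} with $F=VK\Delta b$, use $\abs{K}\lesssim V^{-1}$ and the Haj\l{}asz bound $\abs{\Delta b}\le\rho(x,y)(h(x)+h(y))$, and estimate the level sets via the Ahlfors-regular tail integral $\int_{\rho(x,y)>r}V(x,y)^{-2}\ud\mu(y)\lesssim r^{-d}$ to get $\lambda^{-d}\Norm{h}{L^d(\mu)}^d$, before taking the infimum over $h$. Your approximation discussion goes beyond the paper, which simply takes for granted that $[b,T]$ is the integral operator with kernel $K\Delta b$; if you pursue it, note that truncating $b$ does not by itself settle the on-diagonal part of that identification.
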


\begin{proof}
If $T$ has kernel $K$, then $[b,T]$ has kernel $K\Delta b$, where $\Delta b(x,y):=b(x)-b(y)$. Hence
\begin{equation*}
\begin{split}
   \Norm{[b,T]}{ S^{d,\infty}(L^2(\mu))}
  &=\Norm{\mathcal I_{KB}}{ S^{d,\infty}(L^2(\mu))}\qquad\text{denoting } B:=\Delta b \\
  &=\Norm{\mathcal I_{V^{-1}VKB}}{ S^{d,\infty}(L^2(\mu))} \\
  &\lesssim \Norm{VKB}{L^{d,\infty}(V^{-2})}
  \qquad\text{by \eqref{eq:JW-weak} with }F=VKB \\
  &\lesssim\Norm{B}{L^{d,\infty}(V^{-2})}\qquad\text{since }\abs{K}\lesssim V^{-1} \\
\end{split}
\end{equation*}

By definition, if $h$ is a Haj\l{}asz upper gradient of $b$, then
\begin{equation*}
  \abs{B(x,y)}=\abs{b(x)-b(y)} \leq\rho(x,y) (h(x)+h(y)),
\end{equation*}
and the two terms are symmetric. Thus
\begin{equation*}
\begin{split}
  \iint_{\abs{B(x,y)}>\lambda}\frac{\ud\mu(x)\ud\mu(y)}{V(x,y)^2}
   &\lesssim  \iint_{\rho(x,y)h(x)>\lambda}\frac{\ud\mu(x)\ud\mu(y)}{V(x,y)^2} \\
   &=\int_X\Big(\int_{\rho(x,y)>\lambda/h(x)}\frac{\ud\mu(y)}{V(x,y)^2}\Big)\ud\mu(x).
\end{split}
\end{equation*}
Using Ahlfors $d$-regularity, we have
\begin{equation*}
\begin{split}
  \int_{\rho(x,y)>r}\frac{\ud\mu(y)}{V(x,y)^2}
  \lesssim\sum_{k=0}^\infty\int_{2^kr <\rho(x,y)\leq 2^{k+1}r}\frac{\ud\mu(y)}{(2^k r)^{2d}}
  \lesssim\sum_{k=0}^\infty\frac{1}{(2^k r)^d}\sim r^{-d}.
\end{split}
\end{equation*}
Substituting the result with $r=\lambda/h(x)$ into the earlier display, we obtain
\begin{equation*}
  \int_X\Big(\int_{\rho(x,y)>\lambda/h(x)}\frac{\ud\mu(y)}{V(x,y)^2}\Big)\ud\mu(x)
  \lesssim\int_X\Big(\frac{h(x)}{\lambda}\Big)^d\ud\mu(x)
  =\lambda^{-d}\Norm{h}{L^d(\mu)}^d.
\end{equation*}
Thus
\begin{equation*}
  \Norm{[b,T]}{S^{d,\infty}(L^2(\mu))}
  \lesssim\Norm{B}{L^{d,\infty}(V^{-2})}
  \lesssim\Norm{h}{L^d(\mu)},
\end{equation*}
and taking the infimum over Haj\l{}asz upper gradients $h$ of $b$, we obtain the claimed estimate.
\end{proof}

The following corollary is a restatement of case \eqref{it:main-new} of Theorem \ref{thm:main}; we now complete its proof.

\begin{corollary}\label{cor:main-new}
Let $d>2$, and let $(X,\rho,\mu)$ be an Ahflors $d$-regular with the $q$-Poincar\'e inequality for some $q\in[1,d)$. If $T$ is a strongly non-degenerate $\mu$-singular integral operator and $b\in L^1_{\loc}(\mu)$, then $[b,T]\in S^{d,\infty}(L^2(\mu))$ if and only if $b\in\dot M^{1,d}(\mu)$, and in this case
\begin{equation*}
\begin{split}
  \Norm{b}{\dot M^{1,d}(\mu)}
  &\sim   \Norm{[b,T]}{ S^{d,\infty}(L^2(\mu))}  \\
   &\sim\liminf_{n\to\infty}a_n([b,T])\cdot n^{\frac1d}
   \sim\limsup_{n\to\infty}a_n([b,T])\cdot n^{\frac1d}.
\end{split}
\end{equation*}
\end{corollary}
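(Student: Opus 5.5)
The plan is to repeat the bootstrap of Corollary \ref{cor:Sob=comm} verbatim, with the upper bound \eqref{eq:reg-upper-bound} replaced by Proposition \ref{prop:new-upper}. The only hypothesis of Corollary \ref{cor:Sob=comm} that is not available here is the H\"older regularity of the kernel. That regularity entered only through the upper bound from \cite{Hyt:Osc}; all other ingredients need only the strong non-degeneracy assumed here.

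First I check that the setting of Corollary \ref{cor:Sob<comm} applies with $\nu=\mu$. A measure trivially satisfies the $A_\infty$ relation with itself. Ahlfors $d$-regularity implies doubling. The $q$-Poincar\'e inequality with $q<d$ is assumed. In Corollary \ref{cor:Sob<comm} the kernel is called non-degenerate, but its proof goes through Corollary \ref{cor:osc<comm-asymp}, Proposition \ref{prop:osc<Phi} and \eqref{eq:osc<comm-base}, and these use strong non-degeneracy and no regularity; so the strong non-degeneracy assumed here is exactly what is needed. Hence, if $[b,T]\in S^{d,\infty}(L^2(\mu))$, Corollary \ref{cor:Sob<comm} gives \eqref{eq:Sob<comm-bound} and \eqref{eq:Sob<comm-asymp} with $\nu=\mu$. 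In particular $b\in\dot M^{1,d}(\mu)$.

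Conversely, if $b\in\dot M^{1,d}(\mu)$, Proposition \ref{prop:new-upper} (this is where $d>2$ and Ahlfors regularity of $\mu$ are used) gives $\Norm{[b,T]}{S^{d,\infty}(L^2(\mu))}\lesssim\Norm{b}{\dot M^{1,d}(\mu)}<\infty$. This proves the stated equivalence and the first comparability $\Norm{b}{\dot M^{1,d}(\mu)}\sim\Norm{[b,T]}{S^{d,\infty}}$.

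For the asymptotic part, I combine \eqref{eq:Sob<comm-asymp} with Proposition \ref{prop:new-upper} to get
\begin{equation*}
  \Norm{b}{\dot M^{1,d}(\mu)}\lesssim\Big(\liminf_{n\to\infty}a_n([b,T])n^{\frac1d}\Big)^\theta\Norm{b}{\dot M^{1,d}(\mu)}^{1-\theta}.
\end{equation*}
If the norm is positive, I divide by $\Norm{b}{\dot M^{1,d}(\mu)}^{1-\theta}$ and take the power $1/\theta$, which gives $\Norm{b}{\dot M^{1,d}(\mu)}\lesssim\liminf_{n\to\infty}a_n([b,T])n^{\frac1d}$; if the norm is zero this bound is trivial. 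Then the chain $\liminf\le\limsup\le\sup=\Norm{[b,T]}{S^{d,\infty}}\lesssim\Norm{b}{\dot M^{1,d}(\mu)}$ closes the circle of comparabilities.

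I expect no real obstacle, since the argument is a direct combination of earlier results. The one point to verify is the claim that no kernel regularity is used in the lower-bound chain. That chain consists of \eqref{eq:osc<comm-base} (from \cite{HW:frac}, which uses strong non-degeneracy and boundedness), Proposition \ref{prop:Ainfty} (trivial when $\mu=\nu$), Proposition \ref{prop:cont-discrete-osc}, and Theorem \ref{thm:HK-main}; none of these involves regularity of $K$. A minor additional point is whether $T$ is assumed bounded on $L^2(\mu)$, which Corollary \ref{cor:osc-nu<Phi} requires. I would read ``$\mu$-singular integral operator'' as including this boundedness.
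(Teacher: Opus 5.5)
Your proposal is correct and follows the paper's own proof: the paper also reruns the proof of Corollary \ref{cor:Sob=comm} with $\nu=\mu$, obtaining the upper bound \eqref{eq:reg-upper-bound} from Proposition \ref{prop:new-upper} instead of from H\"older regularity and \cite{Hyt:Osc}. Your two extra checks are points the paper leaves implicit, and you handle both sensibly: the lower-bound chain needs only strong non-degeneracy and no kernel regularity, and Corollary \ref{cor:osc-nu<Phi} needs $T$ to be bounded on $L^2(\mu)$.
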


\begin{proof}
One can repeat the proof of Corollary \ref{cor:Sob=comm} , making only the following change. In the said proof, the estimate \eqref{eq:reg-upper-bound} was obtained by appealing to the assumed H\"older $\eta$-regularity and \cite[Theorem 1.4]{Hyt:Osc}. Now, under the present assumptions, the same estimate \eqref{eq:reg-upper-bound} (with $\mu=\nu$) is obtained from Proposition \ref{prop:new-upper}. The rest of the proof remains unchanged, except for taking $\nu=\mu$ throughout.
\end{proof}

\subsection*{AI declaration}

The author consulted GPT-5.6 Luna for general background about traces (relevant to Corollary \ref{cor:trace}), but the information was subsequently checked from other sources as cited in the text. Other than this, there was no AI involvement in the rest of the work. The author takes full responsibility of the entire content.


\end{document}